\documentclass{amsart}%
\usepackage{amssymb}
\usepackage{amsfonts}
\usepackage{geometry}
\usepackage{graphicx}
\usepackage{amsmath}%
\providecommand{\U}[1]{\protect\rule{.1in}{.1in}}
\newtheorem{theorem}{Theorem}
\theoremstyle{plain}

\newtheorem{conclusion}[theorem]{Conclusion}

\newtheorem{definition}[theorem]{Definition}

\newtheorem{lemma}[theorem]{Lemma}
\newtheorem{notation}[theorem]{Notation}

\newtheorem{remark}[theorem]{Remark}

\numberwithin{equation}{section}
\begin{document}
\title[Fefferman's Fourier extension theorem in the plane]{A discussion of three arguments related to Fefferman's Fourier extension
theorem in the plane}
\author[E. T. Sawyer]{Eric T. Sawyer$^{\dagger}$}
\address{Eric T. Sawyer, Department of Mathematics and Statistics\\
McMaster University\\
1280 Main Street West\\
Hamilton, Ontario L8S 4K1 Canada}
\email{sawyer@mcmaster.ca}
\thanks{$\dagger$ Research supported in part by a grant from the National Science and
Engineering Research Council of Canada.}
\date{\today }

\begin{abstract}
The Fourier extension conjecture of E. Stein, see \cite{Ste} and \cite{Ste2},
in $2$ dimensions is,%
\[
\left\Vert \widehat{f\sigma}\right\Vert _{L^{p}\left(  \mathbb{R}^{2}\right)
}\leq C_{p}\left\Vert f\right\Vert _{L^{p}\left(  \sigma\right)  },\ \text{for
}f\in L^{p}\left(  \sigma\right)  \text{ and }p>4,
\]
where $\sigma$ is arc length measure on the circle $\mathbb{T}$ or a smooth
compactly supported measure on the parabola $\mathbb{P}$.

This conjecture was proved in 1973 by C.Fefferman \cite{Fef}, see also
Carleson and Sj\"{o}lin \cite{CaSj} and Zygmund\ \cite{Zyg}, with
simplifications given by other authors later on, in particular by L.
H\"{o}rmander and T. Tao, see e.g. the book by C. Demeter \cite{Dem}.

We discuss yet three\ more arguments for this classical theorem on the
parabola. The first argument uses C. Fefferman's decoupling \cite{Fef}
together with a decomposition into Haar wavelets. This sets the stage for the
second argument whose point of departure is the bilinear characterization of
Tao, Vargas and Vega \cite{TaVaVe}, and relies on the bilinear interplay of
the classical wave packet constructions and discrete characterizations with an
induction on scales.

However, each of the above two arguments rely on some form of Fefferman's
convolution decoupling and the special nature of the critical planar index $4
$ as a positive even integer. On the other hand, our third argument avoids
both of these obstacles by using smooth Alpert projections with wave packets,
discrete bilinear characterizations, and the discrete Fourier transform of the
coefficient sequences associated with the projections.

\end{abstract}
\maketitle
\tableofcontents

\section{Introduction}

In this paper we discuss three arguments in connection with Fefferman's
Fourier restriction theorem on the parabola, equivalently its dual formulation
as a Fourier extension theorem.

The first argument, which is hardly new, uses C. Fefferman's `half decoupling'
arising from pairwise disjointedness of certain convolutions, followed by the
remaining `half decoupling' that uses Haar wavelets and simple estimates of
the resulting convolutions. Both of these decouplings rely heavily on the
curvature of the paraobola and the special nature of the critical planar index
$4$ as a positive \emph{even} integer.

Then we sketch a second argument whose point of departure is the bilinear
characterization of the Fourier extension conjecture in Tao, Vargas and Vega
\cite{TaVaVe}. This second argument exploits the use of bilinear testing
conditions in conjunction with wave packet decompositions to give a proof of
the bilinear estimate in \cite{BeCaTa}. The curvature of the circle now arises
through a variant of Fefferman's convolution `half decoupling', and the
separation of normals in the bilinear estimate. Again the special nature of
$4$ is crucial,

Finally, we sketch a third argument that avoids both Fefferman's convolution
decoupling, and the special nature of the planar critical index $4$ as a
positive even integer. The argument starts with the smooth Alpert testing
condition in \cite{RiSa3} in the two-dimensional bilinear setting, and goes on
to use the discrete Fourier transform on the Alpert coefficient sequence, and
then the characterization of local Fourier extensions, to complete an
induction on scales. It is crucial for this induction that the smooth Alpert
testing condition at scale $\frac{1}{2}s$, reduces a Fourier extension
integral over a large cube $Q\left(  0,R\right)  $, to the much smaller cube
$Q\left(  0,R^{\frac{1}{2}}\right)  $. It is the frame property of the
wavelets, together with their smoothness and high order vanishing moments,
that permits this reduction, while just two of the vanishing moments of the
wavelets are needed to establish the frame property itself. Thus altogether
there are three basic decompositions in this third argument: the decomposition
of $f$ into smooth Alpert projections $\mathsf{Q}_{s}^{\eta}f$; the
decomposition of the coefficient sequence of the projection into modulated
sequences by the discrete Fourier transform; and finally the wave packet
decomposition of the resulting functions that neatly separates frequency and
spatial information. The heart of the argument lies in the lengthy bilinear
estimates in \textbf{Alpert Step 4}\ of the third argument where the advantage
of smooth Alpert wavelets finally materializes.

The three arguments are presented in Sections 2, 3 and 4, and can for the most
part be read independently, with the exception that the third argument appeals
to the discrete bilinear characterization in \textbf{Step 3} and to the
iteration in \textbf{Step 5} of the second argument as black boxes. The wave
packet decomposition in \textbf{Step 2} is brought forward to \textbf{Alpert
Step 2} as well. In the final short Section 5, we discuss the differences
between the three proofs, and the possibility of their extensions to higher
dimensions, something that will be taken up in joint work with Luda Korobenko
and Cristian Rios, each of whom has provided invaluable conversations
regarding Fourier extension, and the author is deeply indebted to them.

\subsection{A brief history}

The Fourier extension conjecture in $d\geq2$ dimensions arose from unpublished
work of E. Stein in 1967, see e.g. \cite[see the Notes at the end of Chapter
IX, p. 432, where Stein proved the restriction conjecture for $1\leq
p<\frac{4d}{3d+1}$]{Ste2} and \cite{Ste},%
\[
\left\Vert \widehat{f\sigma_{d-1}}\right\Vert _{L^{p}\left(  \mathbb{R}%
^{d}\right)  }\leq C_{p}\left\Vert f\right\Vert _{L^{p}\left(  \sigma
_{d-1}\right)  },\ \text{for }f\in L^{p}\left(  \sigma_{d-1}\right)  \text{
and }p>\frac{2d}{d-1},
\]
where $\sigma_{d-1}$ is surface measure on the sphere $\mathbb{S}^{d-1}$ or a
smooth compactly supported measure on the paraboloid $\mathbb{P}^{d-1}$. The
case $d=2$ of the Fourier extension conjecture for the circle was proved over
half a century ago by C. Fefferman \cite{Fef}, see also A. Zygmund \cite{Zyg}
and L. Carleson and P. Sj\"{o}lin \cite{CaSj}.

\section{Fefferman's decoupling argument}

We start with a bounded function $f$ supported in $\left[  0,1\right]  $ and
push the absolutely continuous measure $f\left(  x\right)  dx$ on the real
line forward under the map $\Phi\left(  x\right)  =\left(  x,x^{2}\right)  $
to a singular measure $\Phi_{\ast}f$ in the plane supported on the unit
parabola $\mathbb{P}$. The Fourier extension of $f$ is then defined by%
\[
\mathcal{E}f\left(  \xi\right)  \equiv\widehat{\Phi_{\ast}f}\left(
\xi\right)  =\int_{0}^{1}e^{-i\left(  \xi_{1}x+\xi_{2}x^{2}\right)  }f\left(
x\right)  dx,
\]
and Stein's conjecture in the plane, apart from the Knapp line, is equivalent
to%
\[
\left\Vert \mathcal{E}f\right\Vert _{L^{q}\left(  \mathbb{R}^{2}\right)  }\leq
C_{p}\left\Vert f\right\Vert _{L^{q}\left(  \left[  0,1\right]  \right)
},\ \ \ \ \ \text{for }q>4\text{ and }\operatorname*{Supp}f\subset\left[
0,1\right]  .
\]

By subtracting a constant, we may suppose that $f$ has mean zero on
$I_{0}\equiv\left[  0,1\right]  $, and hence that the Haar expansion of $f$ in
the standard grid $\mathcal{D}$ is given by%
\begin{align}
f  &  =\sum_{I\subset I_{0}}\left\langle f,h_{I}\right\rangle h_{I}=\sum
_{t=0}^{\infty}\mathsf{P}_{t}f,\ \ \ \ \ \text{where }\mathsf{P}_{t}%
f\equiv\sum_{I\subset\mathcal{D}_{t}\left[  I_{0}\right]  }\left\langle
f,h_{I}\right\rangle h_{I}\ ,\label{Haar decomp}\\
f  &  =\lim_{s\rightarrow\infty}\mathsf{Q}_{s}f,\ \ \ \ \ \text{where
}\mathsf{Q}_{s}f\equiv\sum_{s=0}^{s}\mathsf{P}_{t}f=\sum_{I\subset
\mathcal{D}\left[  I_{0}\right]  :\ \ell\left(  I\right)  \geq2^{-s}%
}\left\langle f,h_{I}\right\rangle h_{I}\ ,\nonumber
\end{align}
where $\mathcal{D}_{t}\left[  I_{0}\right]  \equiv\left\{  I\in\mathcal{D}%
:\ell\left(  I\right)  =2^{-t}\right\}  $.

In particular the projection $\mathsf{Q}_{s}f$ is constant on each interval
$I_{n}\equiv\left[  2^{-s}\left(  n-1\right)  ,2^{-s}n\right)  $ in
$\mathcal{D}_{s}\left[  I_{0}\right]  $. For convenience we fix $s\in
\mathbb{N}$ and write
\begin{equation}
F=\sum_{n=1}^{2^{s}}F_{n},\ \ \ \ \ \text{ where }F_{n}\equiv a_{n}%
\mathbf{1}_{I_{n}}.\label{constant}%
\end{equation}
Now let $\varphi$ be a nonnegative smooth function adapted to the unit disk in
$\mathbb{R}^{2}$ with integral $1$. Then if $\varphi_{s}\left(  z\right)
\equiv\frac{2^{4s}}{c^{2}}\varphi\left(  \frac{z}{c2^{-2s}}\right)  $ and
$q>4$, we have from the Hausdorff-Young inequality with exponents $\frac{q}{2}
$ and $\left(  \frac{q}{2}\right)  ^{\prime}=\frac{q}{q-2}$ that
\begin{align*}
&  \int_{B\left(  0,2^{2s}\right)  }\left\vert \mathcal{E}F\left(  \xi\right)
\right\vert ^{q}d\xi\lesssim\int_{\mathbb{R}^{2}}\left\vert \widehat
{\varphi_{s}}\left(  \xi\right)  \right\vert ^{q}\left\vert \sum_{n=1}^{2^{s}%
}\widehat{\Phi_{\ast}F_{n}}\left(  \xi\right)  \right\vert ^{q}d\xi
=\int_{\mathbb{R}^{2}}\left\vert \sum_{m,n=1}^{2^{s}}\widehat{\varphi_{s}%
\ast\Phi_{\ast}F_{m}}\left(  \xi\right)  \widehat{\varphi_{s}\ast\Phi_{\ast
}F_{n}}\left(  \xi\right)  \right\vert ^{\frac{q}{2}}d\xi\\
&  =\int_{\mathbb{R}^{2}}\left\vert \sum_{m,n=1}^{2^{s}}\left(  \varphi
_{s}\ast\Phi_{\ast}F_{m}\ast\varphi_{s}\ast\Phi_{\ast}F_{n}\right)  ^{\wedge
}\left(  \xi\right)  \right\vert ^{\frac{q}{2}}d\xi\leq\left(  \int
_{\mathbb{R}^{2}}\left\vert \sum_{m,n=1}^{2^{s}}\varphi_{s}\ast\Phi_{\ast
}F_{m}\ast\varphi_{s}\ast\Phi_{\ast}F_{n}\left(  z\right)  \right\vert
^{\left(  \frac{q}{2}\right)  ^{\prime}}dz\right)  ^{\frac{\frac{q}{2}%
}{\left(  \frac{q}{2}\right)  ^{\prime}}}\\
&  =\left(  \int_{\mathbb{R}^{2}}\left\vert \sum_{m,n=1}^{2^{s}}G_{m}\ast
G_{n}\left(  z\right)  \right\vert ^{\frac{q}{q-2}}dz\right)  ^{\frac{q-2}{2}%
},\ \ \ \ \ \ \ \ \ \ \ \ \ \ \ \text{where }G_{k}\equiv\varphi_{s}\ast
\Phi_{\ast}F_{k}\ .
\end{align*}

Of course $\sum_{m=1}^{2^{s}}\mathbf{1}_{\operatorname*{Supp}G_{m}}\leq C$ by
definition of $G_{m}$, and Fefferman's clever decoupling observation in
\cite{Fef} is that%
\begin{equation}
\sum_{m,n=1}^{2^{s}}\mathbf{1}_{\operatorname*{Supp}G_{m}\ast G_{n}}\leq
\sum_{m,n=1}^{2^{s}}\mathbf{1}_{\operatorname*{Supp}G_{m}+\operatorname*{Supp}%
G_{n}}\leq C,\label{Feff over}%
\end{equation}
independent of $s$, by the curvature of the parabola. This can be seen by
letting $R_{m}$ be a tilted $C2^{-s}\times C2^{-2s}$ rectangle containing the
support of $G_{m}$, and then carefully examining the locations of the set sums
$R_{m}+R_{n}$ in the plane.

A consequence of this latter bound is the first decoupling,%
\[
\int_{\mathbb{R}^{2}}\left\vert \sum_{m,n=1}^{2^{s}}G_{m}\ast G_{n}\left(
z\right)  \right\vert ^{p}dz\leq C_{p}\sum_{m,n=1}^{2^{s}}\int_{\mathbb{R}%
^{2}}\left\vert G_{m}\ast G_{n}\left(  z\right)  \right\vert ^{p}%
dz,\ \ \ \ \ \text{for }1\leq p<\infty.
\]
In particular, with $p=\frac{q}{q-2}$ we obtain%
\[
\int_{B\left(  0,2^{2s}\right)  }\left\vert \mathcal{E}F\left(  \xi\right)
\right\vert ^{q}d\xi\lesssim\left(  \int_{\mathbb{R}^{2}}\left\vert
\sum_{m,n=1}^{2^{s}}G_{m}\ast G_{n}\left(  z\right)  \right\vert ^{\frac
{q}{q-2}}dz\right)  ^{\frac{q-2}{2}}\lesssim\left(  \sum_{m,n=1}^{2^{s}}%
\int_{\mathbb{R}^{2}}\left\vert G_{m}\ast G_{n}\left(  z\right)  \right\vert
^{\frac{q}{q-2}}dz\right)  ^{\frac{q-2}{2}}.
\]

Now we bound $G_{k}$ by
\begin{align*}
\left\vert G_{k}\right\vert  &  \leq\left\Vert G_{k}\right\Vert _{L^{\infty}%
}\mathbf{1}_{\operatorname*{Supp}g_{k}}\leq\left\Vert \varphi_{s}\ast
\Phi_{\ast}F_{k}\right\Vert _{L^{\infty}}\mathbf{1}_{R_{k}}=\left(  \sup_{z\in
R_{k}}\left\vert \int2^{4s}\varphi\left(  \frac{z-w}{2^{-2s}}\right)
d\Phi_{\ast}F_{k}\left(  w\right)  \right\vert \right)  \mathbf{1}_{R_{k}}\\
&  \leq2^{4s}2^{-2s}\left\vert a_{k}\right\vert \mathbf{1}_{R_{k}}%
=2^{2s}\left\vert a_{k}\right\vert \mathbf{1}_{R_{k}}\ ,
\end{align*}
which implies%
\begin{align*}
\int_{\mathbb{R}^{2}}\left\vert G_{m}\ast G_{n}\left(  z\right)  \right\vert
^{\frac{q}{q-2}}dz  &  \lesssim\int_{\mathbb{R}^{2}}\left\vert 2^{2s}%
\left\vert a_{m}\right\vert \mathbf{1}_{R_{m}}\ast2^{2s}\left\vert
a_{n}\right\vert \mathbf{1}_{R_{n}}\left(  z\right)  \right\vert ^{\frac
{q}{q-2}}dz\\
&  \lesssim\left\vert a_{m}\right\vert ^{\frac{q}{q-2}}\left\vert
a_{n}\right\vert ^{\frac{q}{q-2}}2^{\frac{4q}{q-2}s}\int_{\mathbb{R}^{2}%
}\left\vert \mathbf{1}_{R_{m}}\ast\mathbf{1}_{R_{n}}\left(  z\right)
\right\vert ^{\frac{q}{q-2}}dz.
\end{align*}
If we denote by $\theta_{m,n}$ the angle between the vectors perpendicular to
$\Phi\left(  I_{m}\right)  $ and $\Phi\left(  I_{n}\right)  $, then we have
$\theta_{m,n}\approx2^{-s}+\left\vert m-n\right\vert $ and so%
\begin{align*}
&  \left\vert \mathbf{1}_{R_{m}}\ast\mathbf{1}_{R_{n}}\left(  z\right)
\right\vert \leq\left\vert R_{m}\cap R_{n}\right\vert \mathbf{1}_{R_{m}+R_{n}%
}\left(  z\right)  \lesssim\frac{2^{-4s}}{2^{-s}+\left\vert \theta
_{m,n}\right\vert }\mathbf{1}_{R_{m}+R_{n}}\left(  z\right)  =\frac{2^{-3s}%
}{1+\left\vert m-n\right\vert }\mathbf{1}_{R_{m}+R_{n}}\left(  z\right)  ,\\
&  \ \ \ \ \ \ \ \ \ \ \ \ \ \ \ \text{and }\left\vert R_{m}+R_{n}\right\vert
\approx2^{-s}2^{-s}\left(  2^{-s}+\left\vert \theta_{m,n}\right\vert \right)
\approx\left(  1+\left\vert m-n\right\vert \right)  2^{-3s}.
\end{align*}

As a consequence we conclude that for any function $F$ on $I_{0}$ as in
(\ref{constant}), that is constant on each interval $I\in\mathcal{G}%
_{s}\left[  I_{0}\right]  $,%
\begin{align*}
&  \left(  \int_{B\left(  0,2^{2s}\right)  }\left\vert \mathcal{E}F\left(
\xi\right)  \right\vert ^{q}d\xi\right)  ^{\frac{2}{q-2}}\lesssim\sum
_{m,n=1}^{2^{s}}\left\vert a_{m}\right\vert ^{\frac{q}{q-2}}\left\vert
a_{n}\right\vert ^{\frac{q}{q-2}}2^{\frac{4q}{q-2}s}\int_{\mathbb{R}^{2}%
}\left\vert \mathbf{1}_{R_{m}}\ast\mathbf{1}_{R_{n}}\left(  z\right)
\right\vert ^{\frac{q}{q-2}}dz\\
&  \lesssim\sum_{m,n=1}^{2^{s}}\left\vert a_{m}\right\vert ^{\frac{q}{q-2}%
}\left\vert a_{n}\right\vert ^{\frac{q}{q-2}}2^{\frac{4q}{q-2}s}\left(
\frac{2^{-3s}}{1+\left\vert m-n\right\vert }\right)  ^{\frac{q}{q-2}}\left(
1+\left\vert m-n\right\vert \right)  2^{-3s}\\
&  =\sum_{m,n=1}^{2^{s}}\left\vert a_{m}\right\vert ^{\frac{q}{q-2}}\left\vert
a_{n}\right\vert ^{\frac{q}{q-2}}\frac{2^{\frac{4q}{q-2}s}2^{-\frac{3q}{q-2}%
s}2^{-3s}}{\left(  1+\left\vert m-n\right\vert \right)  ^{\frac{q}{q-2}-1}%
}\leq\sum_{m,n=1}^{2^{s}}\frac{\left\vert a_{m}\right\vert ^{\frac{2q}{q-2}%
}+\left\vert a_{n}\right\vert ^{\frac{2q}{q-2}}}{2}\frac{2^{-\frac{2q-6}%
{q-2}s}}{\left(  1+\left\vert m-n\right\vert \right)  ^{\frac{2}{q-2}}},
\end{align*}
which by symmetry of the double sum equals%
\begin{align*}
&  2^{-\frac{2q-6}{q-2}s}\sum_{n=1}^{2^{s}}\left\vert a_{n}\right\vert
^{\frac{2q}{q-2}}\sum_{m=1}^{2^{s}}\frac{1}{\left(  1+\left\vert
m-n\right\vert \right)  ^{\frac{2}{q-2}}}\approx2^{-\frac{2q-6}{q-2}s}%
\sum_{n=1}^{2^{s}}\left\vert a_{n}\right\vert ^{\frac{2q}{q-2}}2^{\frac
{q-4}{q-2}s}=2^{-s}\sum_{n=1}^{2^{s}}\left\vert a_{n}\right\vert ^{\frac
{2q}{q-2}}\\
&  =2^{-s}\sum_{n=1}^{2^{s}}\frac{1}{\left\vert I_{n}\right\vert }\left\Vert
F_{n}\right\Vert _{L^{\frac{2q}{q-2}}\left(  I_{0}\right)  }^{\frac{2q}{q-2}%
}=\sum_{n=1}^{2^{s}}\left\Vert F_{n}\right\Vert _{L^{\frac{2q}{q-2}}\left(
I_{0}\right)  }^{\frac{2q}{q-2}}=\left\Vert \sum_{n=1}^{2^{s}}F_{n}\right\Vert
_{L^{\frac{2q}{q-2}}\left(  I_{0}\right)  }^{\frac{2q}{q-2}}=\left\Vert
F\right\Vert _{L^{\frac{2q}{q-2}}\left(  I_{0}\right)  }^{\frac{2q}{q-2}},
\end{align*}
which constitutes the second decoupling, and where the implied constants are
independent of $s\in\mathbb{N}$.

Taking $F=\mathsf{Q}_{t}f$ for any $1\leq t\leq s$, we have proved%
\[
\left\Vert \mathcal{E}\mathsf{Q}_{t}f\right\Vert _{L^{q}\left(  B\left(
0,2^{2s}\right)  \right)  }^{q}\lesssim\left(  \left\Vert \mathsf{Q}%
_{t}f\right\Vert _{L^{\frac{2q}{q-2}}\left(  I_{0}\right)  }^{\frac{2q}{q-2}%
}\right)  ^{\frac{q-2}{2}}\leq\left\Vert f\right\Vert _{L^{\frac{2q}{q-2}%
}\left(  I_{0}\right)  }^{q}\lesssim\left\Vert f\right\Vert _{L^{q}\left(
I_{0}\right)  }^{q},
\]
since $\frac{2q}{q-2}<q\ $if $q>4$. Letting first $s\nearrow\infty$, we obtain%
\[
\int_{\mathbb{R}^{2}}\left\vert \mathcal{E}\mathsf{Q}_{t}f\left(  \xi\right)
\right\vert ^{q}d\xi\lesssim\left\Vert f\right\Vert _{L^{q}\left(
I_{0}\right)  }^{q},\ \ \ \ \ \text{for all }t\in\mathbb{N},
\]
and then letting $t\nearrow\infty$, we obtain from Fatou's lemma that for
$q>4$,%
\[
\int_{\mathbb{R}^{2}}\left\vert \mathcal{E}f\left(  \xi\right)  \right\vert
^{q}d\xi=\int_{\mathbb{R}^{2}}\lim\inf_{t\rightarrow\infty}\ \left\vert
\mathcal{E}\mathsf{Q}_{t}f\left(  \xi\right)  \right\vert ^{q}d\xi\leq\lim
\inf_{t\rightarrow\infty}\ \int_{\mathbb{R}^{2}}\left\vert \mathcal{E}%
\mathsf{Q}_{t}f\left(  \xi\right)  \right\vert ^{q}d\xi\lesssim\left\Vert
f\right\Vert _{L^{q}\left(  I_{0}\right)  }^{q},
\]
since $\mathsf{Q}_{t}f\rightarrow f$ in $L^{2}\left(  I_{0}\right)  $ implies
that for every $\xi\in\mathbb{R}^{2}$,
\[
\lim_{t\rightarrow\infty}\mathcal{E}\mathsf{Q}_{t}f\left(  \xi\right)
=\lim_{t\rightarrow\infty}\int_{0}^{1}e^{-i\left(  \xi_{1}x+\xi_{2}%
x^{2}\right)  }\mathsf{Q}_{t}f\left(  x\right)  dx=\int_{0}^{1}e^{-i\left(
\xi_{1}x+\xi_{2}x^{2}\right)  }f\left(  x\right)  dx=\mathcal{E}f\left(
\xi\right)  .
\]

\section{Bilinear wave packet induction on scales argument}

Fefferman's beautiful convolution decoupling (\ref{Feff over}) fails in higher
dimensions, as is easily seen in three dimensions by rotating the parabola
into a paraboloid and noting that convolutions of pairs of antipodal
$3D$-rectangles lying above a circle in the horizontal plane, have essentially
the same support. Moreover, the critical index in dimension $d\geq3$ is
$\frac{2d}{d-1}$, which is no longer an even integer, thus preventing the use
of Hausdorff-Young to implement the above method.

We now present a second argument in five steps, in which Fefferman's
convolution decoupling is essentially circumvented by the wave packet
expansion, that decomposes an individual piece of the Fourier extension into
an $\ell^{2}$-weighted sum of translates and results in the one-to-one
property used at the end of \textbf{Step 4}, but the fact that $4$ is an even
integer still plays a role there:

\begin{enumerate}
\item Reduce matters to a local bilinear testing inequality.

\item Expand the functions into their wave packet decompositions.

\item Extend discrete characterizations of Fourier extension to the bilinear setting.

\item Derive bilinear wave packet estimates

\item Iterate the estimates on scales
\end{enumerate}

\subsection{Step 1: bilinear testing condition}

It is well known that using Tao's $\varepsilon$-removal theorem in
\cite{Tao1}, and Nikishin-Maurey-Pisier theory, see e.g. Bourgain \cite{Bou}
for the circle and comments on the parabola, and Buschenhenke \cite{Bus} for
complete details on the parabola, the Fourier extension theorem in the plane
is equivalent to the local\ bilinear inequality (\cite{TaVaVe}, see also
\cite{BoGu}),%
\begin{align}
& \left\Vert \mathcal{E}f_{1}\mathcal{E}f_{2}\right\Vert _{L^{2}\left(
B\left(  0,R\right)  \right)  }\leq C_{\varepsilon}R^{\varepsilon}\left\Vert
f_{1}\right\Vert _{L^{\infty}\left(  U_{1}\right)  }\left\Vert f_{2}%
\right\Vert _{L^{\infty}\left(  U_{2}\right)  },\label{bi in}\\
& \ \ \ \ \ \ \ \ \ \ \text{for all }f_{\mu}\text{ bounded with support in
}U_{\mu}\ ,\nonumber
\end{align}
taken over $\nu$-separated intervals $U_{1}$ and $U_{2}$, for some
sufficiently small $\nu>0$. We will take this equivalence as a black box here
(it holds in all dimensions).

\subsection{Step 2: wave packet decomposition}

Recall that $\Gamma_{t}=2^{-t}\mathbb{Z}_{2^{t}}$ for $t\in\mathbb{N}$. Now
fix $s\in2\mathbb{N}$ and write the tiling $\mathcal{D}_{\frac{1}{2}s}\left[
U\right]  =\left\{  I_{\frac{1}{2}s}+n\right\}  _{n\in\Gamma_{\frac{1}{2}s}}$
as%
\[
\mathcal{D}_{\frac{1}{2}s}\left[  U\right]  =\mathcal{D}_{\frac{1}{2}%
s}^{\operatorname*{even}}\left[  U\right]  \bigcup\mathcal{D}_{\frac{1}{2}%
s}^{\operatorname*{odd}}\left[  U\right]  ,
\]
where $2^{\frac{1}{2}s}n\in\mathbb{Z}_{2^{\frac{1}{2}s}}$ is even and odd
respectively in $\left\{  I_{\frac{1}{2}s}+n\right\}  _{n\in\Gamma_{\frac
{1}{2}s}}$. By symmetry it suffices to consider just the case $\mathcal{G}%
_{\frac{1}{2}s}^{\operatorname*{even}}\left[  U\right]  $, and to reduce
clutter of notation, we will suppress the superscript $\operatorname*{even}$,
and just write $\mathcal{G}_{\frac{1}{2}s}\left[  U\right]  $, i.e.%
\begin{equation}
\mathcal{G}_{\frac{1}{2}s}\left[  U\right]  =\mathcal{G}_{\frac{1}{2}%
s}^{\operatorname*{even}}\left[  U\right]  .\label{doubles}%
\end{equation}

Following the wave packet decomposition\ as in \cite[Chapter 2]{Dem} (which
also holds in all dimensions), we apply the torus Fourier series decomposition
to the function $f_{L}\equiv\mathbf{1}_{L}f$ $\ $in the doubled interval $2L$
viewed as a torus, where these doubled intervals are pairwise disjoint because
of our assumption (\ref{doubles}).

\begin{notation}
For any interval $I$, we define $\lambda_{I}$ to be its left hand endpoint.
\end{notation}

Using the orthonormal basis $\left\{  \frac{e^{2\pi i\lambda_{K}x}}%
{\sqrt{\left\vert L\right\vert }}\right\}  _{K\in\mathcal{G}_{-\frac{1}{2}s}}$
of $L^{2}\left(  L\right)  $, and including smooth cutoff functions $\chi_{L}$
that are $1$ on $L$ and supported in $2L$, the result is, with a slight abuse
of notation taking $2L$ back to $L$,%
\[
f\left(  x\right)  =\sum_{L\in\mathcal{G}_{\frac{1}{2}s}\left[  U\right]
}\sum_{K\in\mathcal{G}_{-\frac{1}{2}s}}\widehat{f_{L}}\left(  K\right)
\chi_{L}\left(  x\right)  \frac{e^{2\pi i\lambda_{K}x}}{\sqrt{\left\vert
L\right\vert }}=\sum_{L\in\mathcal{G}_{\frac{1}{2}s}\left[  U\right]  }%
\sum_{K\in\mathcal{G}_{-\frac{1}{2}s}}e^{2\pi i\lambda_{K}\lambda_{L}%
}\widetilde{f_{L}}\left(  K\right)  \chi_{L}\left(  x\right)  e^{2\pi
i\lambda_{K}\left(  x-\lambda_{L}\right)  },
\]
where%
\begin{equation}
\widehat{f_{L}}\left(  K\right)  \equiv\int f_{L}\left(  y\right)
\frac{e^{-2\pi i\lambda_{K}y}}{\sqrt{\left\vert L\right\vert }}dy\text{ and
}\widetilde{f_{L}}\left(  K\right)  =\frac{\widehat{f_{L}}\left(  K\right)
}{\sqrt{\left\vert L\right\vert }}=R^{\frac{1}{4}}\widehat{f_{L}}\left(
K\right)  .\label{conn}%
\end{equation}
Finally we take the Fourier extension of $f$ to obtain the wave packet
decomposition,%
\[
\mathcal{E}f\left(  \xi\right)  =\sum_{L\in\mathcal{G}_{\frac{1}{2}s}\left[
U\right]  }\sum_{K\in\mathcal{G}_{-\frac{1}{2}s}}e^{2\pi i\lambda_{K}%
\lambda_{L}}\widetilde{f_{L}}\left(  K\right)  \mathcal{E}\left(
\tau_{-\lambda_{L}}\exp_{2\pi i\lambda_{K}}\chi_{L}\right)  \left(
\xi\right)  .
\]

Now we compute that%
\begin{align*}
\mathcal{E}\left(  \tau_{-\lambda_{L}}\exp_{2\pi i\lambda}\chi_{L}\right)
\left(  \xi\right)    & =\left[  \Phi_{\ast}\left(  \tau_{-\lambda_{L}}%
\exp_{2\pi i\lambda}\chi_{L}\right)  \right]  ^{\wedge}\left(  \xi\right)
=\int e^{-2\pi i\xi\cdot\Phi\left(  x\right)  }e^{2\pi i\lambda\left(
x-\lambda_{L}\right)  }\chi_{L}\left(  x\right)  dx\\
& =e^{-2\pi i\lambda\lambda_{L}}\int e^{-2\pi i\left(  \xi_{1}-\lambda\right)
x+\xi_{2}x^{2}}\chi_{L}\left(  x\right)  dx=e^{-2\pi i\lambda\lambda_{L}%
}\mathcal{E}\chi_{L}\left(  \xi-\left(  \lambda,0\right)  \right)  ,
\end{align*}
i.e. that $\mathcal{E}\left(  \tau_{-\lambda_{L}}\exp_{i\lambda}\chi
_{L}\right)  $ is $e^{-2\pi i\lambda\lambda_{L}}$ times $\mathcal{E}\left(
\exp_{i\lambda}\chi_{L}\right)  $, which is a \emph{horizontal} translate of
$\mathcal{E}\chi_{L}$ by $\left(  \lambda,0\right)  $. Moreover, since we are
ultimately integrating in $\xi$ over the cube $Q\left(  0,2^{s}\right)  $, we
can without loss of generality multiply the Fourier extension by
$\widehat{\phi_{2^{s}}}$, i.e. convolve the singular measure $\Phi_{\ast
}\left(  \exp_{2\pi i\lambda}\chi_{L}\right)  $ with an appropriate
approximate identity $\phi_{2^{s}}$ at scale $2^{-s}$, and obtain%
\begin{align*}
& \ \ \ \ \ \ \ \ \ \ \ \ \ \ \ \widehat{\phi_{2^{s}}}\left(  \xi\right)
\mathcal{E}_{S}\left(  \exp_{2\pi i\lambda}\chi_{L}\right)  \left(
\xi\right)  =\left[  \phi_{2^{s}}\ast\Phi_{\ast}\left(  \exp_{2\pi i\lambda
}\chi_{L}\right)  \right]  ^{\wedge}\left(  \xi\right)  \\
& =\int e^{-2\pi i\xi z}\left(  \int\Phi_{\ast}\left(  \exp_{2\pi i\lambda
}\chi_{L}\right)  \left(  z-w\right)  \phi_{2^{s}}\left(  w\right)  dw\right)
dz=\int\left(  \int e^{-2\pi i\xi z}\Phi_{\ast}\left(  \exp_{2\pi i\lambda
}\chi_{L}\right)  \left(  z-w\right)  dz\right)  \phi_{2^{s}}\left(  w\right)
dw\\
& =\int\left(  \int e^{-2\pi i\xi z}\left[  \tau_{-w}\Phi_{\ast}\left(
\exp_{2\pi i\lambda}\chi_{L}\right)  \right]  \left(  z\right)  dz\right)
\phi_{2^{s}}\left(  w\right)  dw=\int\left(  \int e^{-2\pi i\xi z}\Phi_{\ast
}^{w}\left(  \exp_{2\pi i\lambda}\chi_{L}\right)  \left(  z\right)  dz\right)
\phi_{2^{s}}\left(  w\right)  dw
\end{align*}
which gives%
\begin{align}
& \widehat{\phi_{2^{s}}}\left(  \xi\right)  \mathcal{E}_{S}\left(  \exp_{2\pi
i\lambda}\chi_{L}\right)  \left(  \xi\right)  =\int\left(  \int e^{-2\pi i\xi
z}\Phi_{\ast}^{w}\left(  \exp_{2\pi i\lambda}\chi_{L}\right)  \left(
z\right)  dz\right)  \phi_{2^{s}}\left(  w\right)  dw\label{horiz}\\
& =\int\left[  \Phi_{\ast}^{w}\left(  \exp_{2\pi i\lambda}\chi_{L}\right)
\right]  ^{\wedge}\left(  \xi\right)  \phi_{2^{s}}\left(  w\right)
dw=\int\mathcal{E}_{\tau_{w}S}\left(  \exp_{2\pi i\lambda}\chi_{L}\right)
\left(  \xi\right)  \phi_{2^{s}}\left(  w\right)  dw\nonumber\\
& =\int\left\{  \mathcal{E}_{\tau_{w}S}\chi_{L}\left(  \xi-\left(
\lambda,0\right)  \right)  \right\}  \phi_{2^{s}}\left(  w\right)
dw,\nonumber
\end{align}
which is a horizontal translate of the function $\int\left\{  \mathcal{E}%
_{\tau_{w}S}\chi_{L}\left(  \xi\right)  \right\}  \phi_{2^{s}}\left(
w\right)  dw$, where $\tau_{w}S$ is the curve $S$ translated by $w$. Moreover,
we may further multiply both sides of (\ref{horiz}) by $\widehat{\phi_{2^{s}}%
}\left(  \xi-\left(  \lambda,0\right)  \right)  $ if $\left\vert
\lambda\right\vert \leq2^{s-1}=\frac{1}{2}R$, provided we are integrating over
$Q\left(  0,\frac{1}{2}2^{s}\right)  $, which we may assume without loss of generality.

Now $\phi_{2^{s}}\ast\Phi_{\ast}\chi_{L}$ is smoothly adapted to the
translated plate $P_{L}$ given by%
\[
P_{L}=\Phi\left(  \lambda_{L}\right)  +V_{L}\ ,
\]
where $V_{L}$ is the rotated plate with dimensions $R^{-\frac{1}{2}}\times
R^{-1}$ and bottom left vertex at the origin, and having normal to the large
face equal to the unit normal $\mathbf{n}_{\Phi\left(  \lambda_{L}\right)
}^{S}$ to the surface $S$ at the point $\Phi\left(  \lambda_{L}\right)  $.
Moreover, the $L^{\infty}$ norm of the convolution $\phi_{2^{s}}\ast\Phi
_{\ast}\chi$ is%
\[
\left\Vert \phi_{2^{s}}\ast\Phi_{\ast}\chi_{L}\right\Vert _{L^{\infty}}%
\approx\frac{\left\Vert \phi_{2^{s}}\ast\Phi_{\ast}\chi_{L}\right\Vert
_{L^{1}}}{\left\vert \operatorname*{Supp}\left(  \phi_{2^{s}}\ast\Phi_{\ast
}\chi_{L}\right)  \right\vert }\approx\frac{\left\vert L\right\vert
}{R^{-\frac{1}{2}}\times R^{-1}}=\frac{R^{-\frac{1}{2}}}{R^{-\frac{1}{2}%
}\times R^{-1}}=R.
\]
Then the function given by the modulation $e^{-i\xi\cdot\Phi\left(
c_{L}\right)  }$ times the Fourier transform of $\phi_{2^{s}}\ast\Phi_{\ast
}\chi_{L}$, namely
\[
e^{-i\xi\cdot\Phi\left(  c_{L}\right)  }\left(  \phi_{2^{s}}\ast\Phi_{\ast
}\chi_{L}\right)  ^{\wedge}\left(  \xi\right)  =e^{-i\xi\cdot\Phi\left(
c_{L}\right)  }\widehat{\phi_{2^{s}}}\left(  \xi\right)  \mathcal{E}_{S}%
\chi_{L}\left(  \xi\right)  ,
\]
is smoothly adapted to the rectangular tube $T_{L}$ that is defined to be
centered at the origin with dimensions $R^{\frac{1}{2}}\times R$, and having
long edge parallel to $\mathbf{n}_{\Phi\left(  \lambda_{L}\right)  }^{S}$.
Moreover, the $L^{\infty}$ norm of this function is roughly $\left\vert
L\right\vert =R^{-\frac{1}{2}}$.

Since the translated curves $\tau_{w}S$ satisfy the same estimates as $S$, we
conclude that the corresponding functions
\[
\Lambda_{w}^{L}\left(  \xi\right)  \equiv e^{-2\pi i\xi\cdot\left(
w+\Phi\left(  c_{L}\right)  \right)  }\widehat{\phi_{2^{s}}}\left(
\xi\right)  \mathcal{E}_{\tau_{w}S}\chi_{L}\left(  \xi\right)
\]
are uniformly smoothly adapted to the tube $T_{L}$ with $L^{\infty}$ norm
$R^{-\frac{1}{2}}$, since a small perturbation of $S$ results in a
perturbation of $T_{L}$ that is negligible for the purposes of being smoothly
adapted. Moreover, we also have that the functions $\exp_{iw}\Lambda_{w}^{L}$
are uniformly smoothly adapted to the tube $T_{L}$ with $L^{\infty}$ norm
$R^{-\frac{1}{2}}$ for similar reasons using $\left\vert w\right\vert
\leq2^{-s}$.

Now we set $\xi_{\lambda}\equiv\xi-\left(  \lambda,0\right)  $ for notational
convenience, and note that%
\[
\widehat{\phi_{2^{s}}}\left(  \xi\right)  \mathcal{E}_{\tau_{w}S}\chi
_{L}\left(  \xi\right)  =e^{2\pi i\xi\cdot\left(  w+\Phi\left(  c_{L}\right)
\right)  }\Lambda_{w}^{L}\left(  \xi\right)  =e^{2\pi i\xi\cdot\Phi\left(
c_{L}\right)  }\left(  \exp_{2\pi iw}\Lambda_{w}^{L}\right)  \left(
\xi\right)  .
\]
So if we multiply both sides of (\ref{horiz}) by $\widehat{\phi_{2^{s}}%
}\left(  \xi_{\lambda}\right)  $, we obtain that%
\begin{align*}
& \widehat{\phi_{2^{s}}}\left(  \xi_{\lambda}\right)  \widehat{\phi_{2^{s}}%
}\left(  \xi\right)  \mathcal{E}_{S}\left(  \exp_{2\pi i\lambda}\chi
_{L}\right)  \left(  \xi\right)  =\widehat{\phi_{2^{s}}}\left(  \xi_{\lambda
}\right)  \int\left\{  \mathcal{E}_{\tau_{w}S}\chi_{L}\left(  \xi_{\lambda
}\right)  \right\}  \phi_{2^{s}}\left(  w\right)  dw\\
& =\int\left\{  \widehat{\phi_{2^{s}}}\left(  \xi_{\lambda}\right)
\mathcal{E}_{\tau_{w}S}\chi_{L}\left(  \xi_{\lambda}\right)  \right\}
\phi_{2^{s}}\left(  w\right)  dw=\int\left\{  e^{i\xi_{\lambda}\cdot
\Phi\left(  c_{L}\right)  }\left(  \exp_{2\pi iw}\Lambda_{w}^{L}\right)
\left(  \xi_{\lambda}\right)  \right\}  \phi_{2^{s}}\left(  w\right)  dw\\
& =e^{i\xi_{\lambda}\cdot\Phi\left(  c_{L}\right)  }\left\{  \int\left(
\exp_{2\pi iw}\Lambda_{w}^{L}\right)  \phi_{2^{s}}\left(  w\right)
dw\right\}  \left(  \xi_{\lambda}\right)  =e^{2\pi i\xi_{\lambda}\cdot
\Phi\left(  c_{L}\right)  }\Lambda^{L}\left(  \xi_{\lambda}\right)  =\left(
\exp_{2\pi i\Phi\left(  c_{L}\right)  }\Lambda^{L}\right)  \left(
\xi_{\lambda}\right)  ,
\end{align*}
where $\Lambda^{L}$ is an average $\int\cdot\phi_{2^{s}}\left(  w\right)  dw$
of functions $\exp_{2\pi iw}\Lambda_{w}^{L}$ that are uniformly smoothly
adapted to the tube $T_{L}$ with $L^{\infty}$ norm $R^{-\frac{1}{2}}$, and
hence $\Lambda^{L}$ itself is smoothly adapted to the tube $T_{L}$ with
$L^{\infty}$ norm $R^{-\frac{1}{2}}$.

Altogether then, we see that $\widehat{\phi_{2^{s}}}\left(  \xi_{\lambda
}\right)  \widehat{\phi_{2^{s}}}\left(  \xi\right)  \mathcal{E}_{S}\left(
\exp_{2\pi i\lambda}\chi_{L}\right)  \left(  \xi\right)  $ is a horizontal
translate by $\left(  \lambda,0\right)  $ of the function $\exp_{2\pi
i\Phi\left(  c_{L}\right)  }\Lambda^{L}$, which is a modulation $\exp_{2\pi
i\Phi\left(  c_{L}\right)  }$ times a function $\Lambda^{L}$ that is smoothly
adapted to the tube $T_{L}$ with $L^{\infty}$ norm $R^{-\frac{1}{2}}$. Thus in
calculating the norm on the left hand side of the bilinear inequality
(\ref{bi in}), it is enough to replace the function $\mathcal{E}_{S}\left(
\exp_{2\pi i\lambda}\chi_{L}\right)  \left(  \xi\right)  $ with the function
$\left(  \exp_{2\pi i\Phi\left(  c_{L}\right)  }\Lambda^{L}\right)  \left(
\xi-\left(  \lambda,0\right)  \right)  $, and furthermore, by the rapid decay
of derivatives away from the tube, matters can then be reduced in a standard
way to replacing $\mathcal{E}_{S}\left(  \exp_{2\pi i\lambda}\chi_{L}\right)
\left(  \xi\right)  $ with just the function
\[
R^{-\frac{1}{2}}\left(  \exp_{2\pi i\Phi\left(  c_{L}\right)  }\mathbf{1}%
_{T_{L}}\right)  \left(  \xi-\left(  \lambda,0\right)  \right)  =R^{-\frac
{1}{2}}e^{2\pi i\left(  \xi-\left(  \lambda,0\right)  \right)  \cdot
\Phi\left(  c_{L}\right)  }\mathbf{1}_{T_{L}}\left(  \xi-\left(
\lambda,0\right)  \right)  .
\]

\begin{conclusion}
\label{gather}For $\lambda=\lambda_{K}$, which satisfies $\left\vert
\lambda\right\vert \leq R$, the function $\widehat{\phi_{2^{s}}}\left(
\xi_{\lambda}\right)  \widehat{\phi_{2^{s}}}\left(  \xi\right)  \mathcal{E}%
\left(  \exp_{2\pi i\lambda}\chi_{L}\right)  \left(  \xi\right)  $ is a
modulation $e^{2\pi i\left(  \xi-\left(  \lambda_{K},0\right)  \right)
\cdot\Phi\left(  c_{L}\right)  }$ times a function that is smoothly adapted to
the tube $T_{K,L}\equiv\tau_{-\left(  \lambda_{K},0\right)  }T_{L}%
=T_{L}-\left(  \lambda_{K},0\right)  $, and thus it suffices by rapid decay,
to replace the function $\mathcal{E}\left(  \exp_{i\lambda}\chi_{L}\right)
\left(  \xi\right)  $ in our subsequent norm estimates with the function%
\[
e^{2\pi i\left(  \xi-\left(  \lambda_{K},0\right)  \right)  \cdot\Phi\left(
c_{L}\right)  }\mathbf{1}_{T_{K,L}}\left(  \xi\right)  =R^{-\frac{1}{2}%
}e^{2\pi i\xi\cdot\Phi\left(  c_{L}\right)  }e^{-i\lambda_{K}\cdot\lambda_{L}%
}\mathbf{1}_{T_{K,L}}\left(  \xi\right)  .
\]
Moreover, for intervals $K\notin\mathcal{G}_{-\frac{1}{2}s}\left[  \left[
-2^{s},2^{s}\right)  \right]  $, the tubes $T_{K,L}$ lie outside the square
$Q\left(  0,2^{s}\right)  $, and so may be discarded by rapid decay, since the
norm estimates in the bilinear inequality (\ref{bi in}) are restricted to
$Q\left(  0,2^{s}\right)  $. One can picture the above wave packet
decomposition in the plane as follows. For each dyadic interval $L\in
\mathcal{G}_{\frac{1}{2}s}\left[  \left[  -1,1\right)  \right]  $ at scale
$2^{-\frac{1}{2}s}$ that is contained the space side interval $\left[
-1,1\right)  $, and for each dyadic interval $K\in\mathcal{G}_{-\frac{1}{2}%
s}\left[  \left[  -2^{s},2^{s}\right)  \right]  $ at scale $2^{\frac{1}{2}s}$
that is contained in the Fourier series side interval $\left[  -2^{s}%
,2^{s}\right)  $, the Fourier extension $\mathcal{E}\chi_{K,L}\left(
\xi\right)  $ of the function $\chi_{K,L}$, is a modulation times a function
that is smoothly adapted to the tube $T_{K,L}=\tau_{\lambda_{K}}T_{L}$, which
is the translate by $\lambda_{K}=kR^{\frac{1}{2}}$ of the tube $T_{L}$ that is
centered at the origin with dimensions $R^{\frac{1}{2}}\times R$ with long
edge parallel to the unit normal $\mathbf{n}_{\Phi\left(  \lambda_{L}\right)
}^{S}$ to the surface $S$ at the point $\Phi\left(  \lambda_{L}\right)  $. The
unit normals are separated by $R^{-\frac{1}{2}}$, i.e. $\left\vert
\mathbf{n}_{\Phi\left(  \lambda_{L}\right)  }^{S}-\mathbf{n}_{\Phi\left(
\lambda_{L^{\prime}}\right)  }^{S}\right\vert \geq R^{-\frac{1}{2}}$ for
$L,L^{\prime}\in\mathcal{G}_{\frac{1}{2}s}\left[  \left[  -1,1\right)
\right]  $ with $L\neq L^{\prime}$.
\end{conclusion}

Restricting $K$\ to $\mathcal{G}_{-\frac{1}{2}s}\left[  \left[  -2^{s}%
,2^{s}\right)  \right]  $ as in Conclusion \ref{gather}, we may write the wave
packet decomposition of each $f_{\mu}$ as%
\begin{align}
\mathcal{E}f_{\mu}\left(  \xi\right)   & =\sum_{\substack{K_{\mu}%
\in\mathcal{G}_{-\frac{1}{2}s}\left[  \left[  -2^{s},2^{s}\right)  \right]
\\L_{\mu}\in\mathcal{G}_{\frac{1}{2}s}\left[  U_{\mu}\right]  }}e^{2\pi
i\lambda_{K_{\mu}}\lambda_{L_{\mu}}}\widetilde{f_{L_{\mu}}}\left(
\lambda_{K_{\mu}}\right)  \mathcal{E}\varphi_{K_{\mu},L_{\mu}}\left(
\xi\right) \label{wp}\\
& \sim R^{-\frac{1}{2}}\sum_{\substack{K_{\mu}\in\mathcal{G}_{-\frac{1}{2}%
s}\left[  \left[  -2^{s},2^{s}\right)  \right]  \\L_{\mu}\in\mathcal{G}%
_{\frac{1}{2}s}\left[  U_{\mu}\right]  }}e^{2\pi i\lambda_{K_{\mu}}%
\lambda_{L_{\mu}}}\widetilde{f_{L_{\mu}}}\left(  \lambda_{K_{\mu}}\right)
e^{-2\pi i\xi\cdot\Phi\left(  \lambda_{L_{\mu}}\right)  }e^{-2\pi
i\lambda_{K_{\mu}}\lambda_{L_{\mu}}}\mathbf{1}_{T_{K_{\mu},L_{\mu}}}\left(
\xi\right) \nonumber\\
& =R^{-\frac{1}{2}}\sum_{\substack{K_{\mu}\in\mathcal{G}_{-\frac{1}{2}%
s}\left[  \left[  -2^{s},2^{s}\right)  \right]  \\L_{\mu}\in\mathcal{G}%
_{\frac{1}{2}s}\left[  U_{\mu}\right]  }}e^{-2\pi i\xi\cdot\Phi\left(
\lambda_{L_{\mu}}\right)  }\widetilde{f_{L_{\mu}}}\left(  \lambda_{K_{\mu}%
}\right)  \mathbf{1}_{T_{K_{\mu},L_{\mu}}}\left(  \xi\right) \nonumber\\
& =R^{-\frac{1}{4}}\sum_{\alpha}\sum_{\left(  K_{\mu},L_{\mu}\right)
\sim\alpha}e^{-2\pi i\xi\cdot\Phi\left(  \lambda_{L_{\mu}}\right)  }%
\widehat{f_{L_{\mu}}}\left(  \lambda_{K_{\mu}}\right)  \mathbf{1}_{T_{K_{\mu
},L_{\mu}}}\left(  \xi\right)  ,\nonumber
\end{align}
where the sum ranges over a collection of pairwise disjoint squares $\alpha$
of side length $R^{\frac{1}{2}}$ that cover the top half of $Q\left(
0,R\right)  $, and where for a fixed square $\alpha$, we write $\left(
K,L\right)  \sim\alpha$ to mean $T_{K,L}\cap\alpha\neq\emptyset$, and it
remains to estimate the norm of the product function $\prod_{\mu=1}%
^{2}\mathcal{E}f_{\mu}$ using the discrete characterization of Fourier extension.

\subsection{Step 3: discrete characterizations of Fourier extension}

Define the local Fourier extension characteristic $\operatorname*{Four}%
_{L^{q}\rightarrow L^{q}}^{S}\left(  R\right)  $ to be the best constant in
the inequality%
\[
\left\Vert \mathcal{E}_{S}f\right\Vert _{L^{q}\left(  B\left(  0,R\right)
\right)  }\leq\operatorname*{Four}\nolimits_{L^{q}\rightarrow L^{q}}%
^{S}\left(  R\right)  \ \left\Vert f\right\Vert _{L^{q}\left(  U\right)
},\ \text{for }f\in L^{q}\left(  U\right)  ,
\]
i.e. the operator norm of $\mathcal{E}_{S}$ from $L^{q}\left(  U\right)  $ to
$L^{q}\left(  B\left(  0,R\right)  \right)  $. Similarly, define
$\operatorname*{Bilin}_{\otimes_{2}L^{\infty}\rightarrow L^{\frac{q}{2}}}%
^{S}\left(  R\right)  $ to be the best constant in the bilinear inequality%
\[
\left\Vert \mathcal{E}_{S}f_{1}\ \mathcal{E}_{S}f_{2}\right\Vert _{L^{\frac
{q}{2}}\left(  B\left(  0,R\right)  \right)  }^{\frac{1}{2}}\leq
\operatorname*{BiFour}\nolimits_{\otimes_{2}L^{\infty}\rightarrow L^{\frac
{q}{2}}}^{S}\left(  R\right)  \ \left\Vert f_{1}\right\Vert _{L^{\infty
}\left(  U_{1}\right)  }^{\frac{1}{2}}\left\Vert f_{2}\right\Vert _{L^{\infty
}\left(  U_{2}\right)  }^{\frac{1}{2}},
\]
$\ $for $f_{\mu}\in L^{\infty}\left(  U_{\mu}\right)  $ where $U_{1}$ and
$U_{2}$ are $\nu$-separated. Then for $\nu>0$ sufficiently small, we have from
\cite{BoGu} that for every $\varepsilon>0$ there is a positive constant
$C_{\varepsilon}$ such that,%
\begin{equation}
\operatorname*{BiFour}\nolimits_{\otimes_{2}L^{\infty}\rightarrow L^{\frac
{q}{2}}}^{S}\left(  R\right)  \leq\operatorname*{Four}\nolimits_{L^{q}%
\rightarrow L^{q}}^{S}\left(  R\right)  \leq C_{\varepsilon}R^{\varepsilon
}\operatorname*{BiFour}\nolimits_{\otimes_{2}L^{\infty}\rightarrow L^{\frac
{q}{2}}}^{S}\left(  R\right)  .\label{well known}%
\end{equation}

Given a discrete subset $\Lambda\subset S$, define the \emph{discrete} Fourier
extension operator $\mathcal{D}_{S}^{\Lambda}$ as in \cite[Chapter 1]{Dem} by%
\[
\mathcal{D}_{S}^{\Lambda}a\left(  \xi\right)  =\sum_{\lambda\in\Lambda\subset
S}a_{\lambda}e^{i\xi\cdot\lambda},
\]
where $a=\left\{  a_{\lambda}\right\}  _{\lambda\in\Lambda}$ is a sequence of
complex numbers.

\begin{definition}
Let $1<q<\infty$. Define the local discrete characteristic
$\operatorname*{Disc}\nolimits_{\ell^{q}\rightarrow L^{q}}^{S}\left(
R\right)  $ to be the best constant in the testing inequalities,%
\begin{equation}
\left\Vert \mathcal{D}_{S}^{\Lambda}a\right\Vert _{L^{q}\left(  B_{\mathbb{R}%
^{2}}\left(  A,R\right)  \right)  }\leq\operatorname*{Disc}\nolimits_{\ell
^{q}\rightarrow L^{q}}^{S}\left(  R\right)  R^{\frac{1}{q^{\prime}}}\left\Vert
a\right\Vert _{\ell^{q}},\ \text{for }a\in\ell^{q}\left(  \Lambda\right)
,\label{discrete ineq}%
\end{equation}
when taken over all $\frac{1}{R}$-separated subsets $\Lambda$ of $S$ and all
disks $B_{\mathbb{R}^{2}}\left(  A,R\right)  $ in $\mathbb{R}^{2}$ (see e.g.
\cite[Theorem 1.29 on page 21]{Dem} for this definition).\newline In addition,
for $\Lambda_{s}=\Phi\left(  \left\{  \lambda_{I}\right\}  _{I\in
\mathcal{G}_{s}\left[  U\right]  }\right)  $, we define $\operatorname*{Disc}%
\nolimits_{\ell^{q}\rightarrow L^{q}}^{S;\mathcal{G}}\left(  R\right)  $ to be
the best constant in the testing inequality,%
\[
\left\Vert \mathcal{D}_{S}^{\Lambda_{s}}a\right\Vert _{L^{q}\left(
B_{\mathbb{R}^{2}}\left(  A,R\right)  \right)  }\leq\operatorname*{Disc}%
\nolimits_{\ell^{q}\rightarrow L^{q}}^{S;\mathcal{G}}\left(  R\right)
R^{\frac{1}{q^{\prime}}}\left\Vert a\right\Vert _{\ell^{q}},\ \text{for }%
a\in\ell^{q}\left(  \Lambda\right)  ,
\]
with $R=2^{s}$, taken over all $s\in\mathbb{N}$, all sets $\Phi\left(
\left\{  \lambda_{I}\right\}  _{I\in\mathcal{G}_{s}\left[  U\right]  }\right)
$ for $x\in\left[  0,2^{-s}\right)  ^{d-1}$, and all disks $B_{\mathbb{R}^{2}%
}\left(  A,2^{s}\right)  $ in $\mathbb{R}^{2}$.
\end{definition}

\begin{theorem}
\label{simple}Let $2<q<\infty$ and $R>1$. Then%
\[
\operatorname*{Four}\nolimits_{L^{q}\rightarrow L^{q}}^{S}\left(  R\right)
\approx\operatorname*{Disc}\nolimits_{\ell^{q}\rightarrow L^{q}}%
^{S;\mathcal{G}}\left(  R\right)  \approx\operatorname*{Disc}\nolimits_{\ell
^{q}\rightarrow L^{q}}^{S}\left(  R\right)  .
\]

\end{theorem}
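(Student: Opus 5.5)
The plan is to prove the chain of comparisons
\[
\operatorname*{Disc}\nolimits_{\ell^{q}\rightarrow L^{q}}^{S;\mathcal{G}}\left(R\right)\leq\operatorname*{Disc}\nolimits_{\ell^{q}\rightarrow L^{q}}^{S}\left(R\right)\lesssim\operatorname*{Four}\nolimits_{L^{q}\rightarrow L^{q}}^{S}\left(R\right)\lesssim\operatorname*{Disc}\nolimits_{\ell^{q}\rightarrow L^{q}}^{S;\mathcal{G}}\left(R\right),
\]
following \cite[Theorem 1.29]{Dem}. The first inequality is trivial, since the grid sets $\Lambda_{s}$ are $\frac{1}{R}$-separated subsets of $S$, so they are among the competitors in the definition of $\operatorname*{Disc}^{S}$.

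For $\operatorname*{Disc}^{S}\lesssim\operatorname*{Four}^{S}$, I would take a $\frac{1}{R}$-separated set $\Lambda=\Phi\left(\left\{x_{\lambda}\right\}\right)$ and set
\[
f=\sum_{\lambda}a_{\lambda}\,R\,\psi\left(R\left(\cdot-x_{\lambda}\right)\right),
\]
where $\psi$ is a bump supported in a small interval and $\widehat{\Phi_{\ast}\psi_{R}}$ is bounded below on $B\left(0,cR\right)$. On $B\left(A,cR\right)$ each piece $\mathcal{E}\left(R\psi\left(R\left(\cdot-x_{\lambda}\right)\right)\right)$ equals $e^{i\xi\cdot\lambda}$ times a factor that is essentially constant and nonvanishing, because the curvature phase varies by $O(1)$ over an interval of length $\frac{1}{R}$ when $\left\vert\xi\right\vert\lesssim R$. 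After modulating to move $A$ to the origin, we get $\left\Vert\mathcal{E}f\right\Vert_{L^{q}(B(A,cR))}\gtrsim\left\Vert\mathcal{D}^{\Lambda}a\right\Vert_{L^{q}(B(A,cR))}$. On the other side,
\[
\left\Vert f\right\Vert_{L^{q}}\approx R^{1-\frac{1}{q}}\left\Vert a\right\Vert_{\ell^{q}}=R^{\frac{1}{q^{\prime}}}\left\Vert a\right\Vert_{\ell^{q}}
\]
by disjointness of the supports. Covering $B\left(A,R\right)$ by boundedly many balls of radius $cR$ then finishes this step. The factor is not exactly constant, so I would instead take $\psi$ with $\widehat{\psi}$ supported near the origin, or Taylor expand the phase and sum the resulting series with rapid decay, so that $\mathcal{D}^{\Lambda}a$ is dominated by a convergent sum of modulated copies of $\mathcal{E}f_{j}$.

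For $\operatorname*{Four}^{S}\lesssim\operatorname*{Disc}^{S;\mathcal{G}}$, I would discretize $f$ at scale $\frac{1}{R}$. Write $x=\lambda_{I}+y$ with $y\in\left[0,\frac{1}{R}\right)$ and $I\in\mathcal{G}_{s}\left[U\right]$. Then
\[
\mathcal{E}f\left(\xi\right)=\int_{0}^{1/R}\sum_{I}f\left(\lambda_{I}+y\right)e^{-i\xi\cdot\Phi\left(\lambda_{I}+y\right)}dy .
\]
Expanding gives $\xi\cdot\Phi\left(\lambda_{I}+y\right)=\xi\cdot\Phi_{y}\left(\lambda_{I}\right)+O\left(\left\vert\xi\right\vert y^{2}\right)$, where $\Phi_{y}$ is a translate of the curve; this is where the definition quantifies over the shifted grids $x\in\left[0,2^{-s}\right)$. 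The remainder $\xi_{2}y^{2}$ is $O\left(\frac{1}{R}\right)$ on $B\left(0,R\right)$, and the cross term $2\xi_{2}\lambda_{I}y$ is absorbed because $\Phi\left(\lambda_{I}+y\right)$ is exactly a point of the translated grid. So for each fixed $y$ the integrand is $\mathcal{D}^{\Lambda_{s,y}}a^{y}\left(\xi\right)$ with $a_{I}^{y}=f\left(\lambda_{I}+y\right)$. Minkowski's inequality in $y$, the definition of $\operatorname*{Disc}^{S;\mathcal{G}}$, and H\"{o}lder then give
\[
\left\Vert\mathcal{E}f\right\Vert_{L^{q}(B(0,R))}\leq\operatorname*{Disc}\nolimits^{S;\mathcal{G}}\left(R\right)R^{\frac{1}{q^{\prime}}}\int_{0}^{1/R}\left\Vert a^{y}\right\Vert_{\ell^{q}}dy\leq\operatorname*{Disc}\nolimits^{S;\mathcal{G}}\left(R\right)\left\Vert f\right\Vert_{L^{q}} .
\]
The exponents match exactly: H\"{o}lder gives $\int_{0}^{1/R}\left\Vert a^{y}\right\Vert_{\ell^{q}}dy\leq R^{-\frac{1}{q^{\prime}}}\left\Vert f\right\Vert_{L^{q}}$.

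The main obstacle is the middle inequality, $\operatorname*{Disc}^{S}\lesssim\operatorname*{Four}^{S}$, specifically making the "essentially constant factor" rigorous. That factor is not identically one, and if it is handled carelessly it destroys the lower bound. My first attempt will be Demeter's device: insert a Schwartz weight adapted to $B\left(A,R\right)$, write the factor through its Fourier expansion in $\xi/R$, and absorb it as modulations of the coefficients $a_{\lambda}$. Such modulations leave the $\ell^{q}$ norm unchanged, and the operator norm bounds are invariant under modulation.
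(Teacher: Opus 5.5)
Your chain $\operatorname*{Disc}^{S;\mathcal{G}}\leq\operatorname*{Disc}^{S}\lesssim\operatorname*{Four}^{S}\lesssim\operatorname*{Disc}^{S;\mathcal{G}}$ is the right one. Two of its links coincide with the paper.

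\textbf{Where you agree with the paper.} The first link is the trivial inclusion. The last link is the slicing $x=\lambda_{I}+y$, followed by Minkowski in $y$ and H\"{o}lder. This is exactly the argument the paper writes out, in bilinear form, in the proof of Theorem \ref{bilinear simple}. There the identity is exact, because the shifted grids $\Phi(\{\lambda_{I}+y\})$ are admissible in $\operatorname*{Disc}^{S;\mathcal{G}}$. So your Taylor remark about $O(|\xi|y^{2})$ can simply be dropped.

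\textbf{The paper's route for the middle link.} The genuine difference is the link $\operatorname*{Disc}^{S}\lesssim\operatorname*{Four}^{S}$. The paper multiplies the exponential sum by a weight $\widehat{\varphi_{R^{-1}}}$ bounded below on the ball. The product is then \emph{exactly} $\widehat{F}$, with $F=\sum_{\lambda}b_{\lambda}\varphi_{R^{-1}}(\cdot+\lambda)$ supported in $\mathcal{N}_{R^{-1}}(S)$ and $\Vert F\Vert_{L^{q}}\approx\Vert a\Vert_{\ell^{q}}\Vert\varphi_{R^{-1}}\Vert_{L^{q}}$. Lemma \ref{fat surface} then slices $F$ along the translates $S+(0,t)$, $|t|\leq R^{-1}$. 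Each slice is an honest extension up to the unimodular factor $e^{-2\pi i\xi_{2}t}$, and Minkowski plus H\"{o}lder in $t$ give the factor $R^{-1/q^{\prime}}$. Nothing has to be inverted.

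\textbf{Your route, and what it needs.} You stay on $S$ with $f=\sum_{\lambda}a_{\lambda}R\psi(R(\cdot-x_{\lambda}))$. Then $\mathcal{E}f$ is the exponential sum with its $\lambda$-th term multiplied by a factor $m_{\lambda}(\xi)$ that depends on $\lambda$ through $x_{\lambda}$. As you suspect, your first stated lower bound does not follow from $m_{\lambda}\approx1$: cancellation in the sum is not preserved when the factors vary from term to term. Making $\widehat{\psi}$ compactly supported does not remove the $x_{\lambda}$-dependence either.

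What works is to expand the \emph{reciprocal}. Since $m_{\lambda}$ is a smooth function of $\xi/R$ bounded below on $B(0,2cR)$, on a box of side $P\approx cR$ you have $\chi(\xi/R)/m_{\lambda}(\xi)=\sum_{k\in\mathbb{Z}^{2}}d_{k}(\lambda)e^{2\pi ik\cdot\xi/P}$, with $|d_{k}(\lambda)|\leq C_{N}(1+|k|)^{-N}$ uniformly in $\lambda$. Hence on $B(0,cR)$ the exponential sum equals $\sum_{k}e^{2\pi ik\cdot\xi/P}\mathcal{E}f_{k}(\xi)$, where $f_{k}$ is built like $f$ from the coefficients $a_{\lambda}d_{k}(\lambda)$. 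These are bounded multipliers of the $a_{\lambda}$, not unimodular modulations, but the uniform decay in $k$ is all you need.

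Your Taylor-expansion variant also works, provided you absorb the error terms into the left-hand side. Those terms are exponential sums with coefficients $a_{\lambda}x_{\lambda}^{i}$ times polynomial weights of size $O(c)$ on the ball, and absorption is legitimate because the discrete constant is finite for fixed $R$.

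\textbf{Comparison.} Written out this way, your route is valid. It keeps an explicit test function on the curve itself, at the price of a multiplier inversion and a covering of $B(A,R)$ by $cR$-balls. The paper's fattening trick gives an exact identity with no inversion, and it carries over verbatim to the bilinear characterization the paper actually needs.
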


The proof of Theorem \ref{simple} is an easy adaptation of the arguments in
Chapter 1 of C. Demeter's book \cite[Chapter 1]{Dem} using the following lemma
from \cite{TaVaVe}.

\begin{lemma}
[Tao, Vargas and Vega \cite{TaVaVe}]\label{fat surface}Let $\mathcal{N}%
_{R^{-1}}\left(  S\right)  $ denote the surface $S$ `fattened' to have
thickness $R^{-1}$. Then we have the inequality,%
\begin{equation}
\left\Vert \widehat{F}\right\Vert _{L^{q}\left(  B\left(  0,R\right)  \right)
}\lesssim R^{-\frac{1}{q^{\prime}}}\operatorname*{Four}\nolimits_{L^{q}%
\rightarrow L^{q}}^{S}\left\Vert F\right\Vert _{L^{q}\left(  \mathcal{N}%
_{R^{-1}}\left(  S\right)  \right)  }.\label{fat ineq}%
\end{equation}

\end{lemma}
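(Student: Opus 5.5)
The plan is to prove (\ref{fat ineq}) by slicing the fattened neighborhood $\mathcal{N}_{R^{-1}}\left(  S\right)$ into translates of $S$, applying the local extension bound $\operatorname*{Four}\nolimits_{L^{q}\rightarrow L^{q}}^{S}$ on each slice, and recombining with Minkowski and H\"{o}lder over a parameter interval of length $R^{-1}$. Here $\operatorname*{Four}$ is the local characteristic $\operatorname*{Four}\nolimits_{L^{q}\rightarrow L^{q}}^{S}\left(  R\right)$ at scale $R$.

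First parametrize the neighborhood. Write $\mathcal{N}_{R^{-1}}\left(  S\right)  =\left\{  \Phi\left(  x\right)  +\left(  0,t\right)  :x\in U,\ \left\vert t\right\vert \leq CR^{-1}\right\}$; this is possible up to harmless constants since $S$ is a graph over $U$. The Jacobian of $\left(  x,t\right)  \mapsto\Phi\left(  x\right)  +\left(  0,t\right)$ is $1$, so for $F$ supported in $\mathcal{N}_{R^{-1}}\left(  S\right)$ we get
\[
\widehat{F}\left(  \xi\right)  =\int_{\left\vert t\right\vert \leq CR^{-1}}e^{-2\pi i\xi_{2}t}\mathcal{E}_{S}\left[  F_{t}\right]  \left(  \xi\right)  dt,\qquad F_{t}\left(  x\right)  \equiv F\left(  \Phi\left(  x\right)  +\left(  0,t\right)  \right).
\]
Equivalently, $\widehat{F}$ is an average of extension operators over the vertically translated curves $\tau_{\left(  0,t\right)  }S$. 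Each of these differs from $\mathcal{E}_{S}F_{t}$ only by the unimodular factor $e^{-2\pi i\xi_{2}t}$, which does not affect $L^{q}$ norms.

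Next take the $L^{q}\left(  B\left(  0,R\right)  \right)$ norm. By Minkowski's integral inequality,
\[
\left\Vert \widehat{F}\right\Vert _{L^{q}\left(  B\left(  0,R\right)  \right)  }\leq\int_{\left\vert t\right\vert \leq CR^{-1}}\left\Vert \mathcal{E}_{S}F_{t}\right\Vert _{L^{q}\left(  B\left(  0,R\right)  \right)  }dt\leq\operatorname*{Four}\nolimits_{L^{q}\rightarrow L^{q}}^{S}\left(  R\right)  \int_{\left\vert t\right\vert \leq CR^{-1}}\left\Vert F_{t}\right\Vert _{L^{q}\left(  U\right)  }dt.
\]
Then apply H\"{o}lder in $t$ over an interval of length $\approx R^{-1}$:
\[
\int\left\Vert F_{t}\right\Vert _{L^{q}}dt\lesssim R^{-\frac{1}{q^{\prime}}}\left(  \int\left\Vert F_{t}\right\Vert _{L^{q}}^{q}dt\right)  ^{\frac{1}{q}}=R^{-\frac{1}{q^{\prime}}}\left\Vert F\right\Vert _{L^{q}\left(  \mathcal{N}_{R^{-1}}\left(  S\right)  \right)  },
\]
where the last equality is Fubini together with the unit Jacobian. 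This gives (\ref{fat ineq}).

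The only point needing care is that the slices $F_{t}$ are supported in a slightly enlarged parameter set, and that the constant $\operatorname*{Four}$ must be uniform over the translated curves $\tau_{\left(  0,t\right)  }S$. Both are handled by the algebra above: vertical translation only modulates the extension, so it changes nothing in norm. If the normal neighborhood is used instead of the vertical one, compare the two neighborhoods, which are equivalent up to a constant factor in thickness since $S$ has bounded slope. If one wants the bound in terms of the global $\operatorname*{Four}$, note that the local constant is dominated by it. I expect no real obstacle here; the lemma is essentially a Fubini/H\"{o}lder averaging argument.
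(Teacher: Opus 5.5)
Your argument is correct. It uses the same ingredients as the paper: vertical slicing of $\mathcal{N}_{R^{-1}}(S)$ with unit Jacobian, the bound $\operatorname*{Four}\nolimits_{L^{q}\rightarrow L^{q}}^{S}$, and H\"older in $t$ over an interval of length $\approx R^{-1}$. However, you apply them in a different order, and the order matters.

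The paper first averages over the fibres, $f(x)=\int_{-R^{-1}}^{R^{-1}}F(\Phi(x)+t)\,dt$. It then asserts $\|\widehat{F}\|_{L^{q}}=\|\mathcal{E}_{S}f\|_{L^{q}}$, applies the extension bound once, and uses H\"older pointwise in $x$. To reach that identity it silently discards the factor $e^{-2\pi i\xi_{2}t}$ inside the $t$-integral. For $|\xi|\leq R$ and $|t|\leq R^{-1}$ this phase varies by $O(1)$; it is not $\approx 1$. So the identity fails in general: take $F(\Phi(x)+(0,t))=g(x)h(t)$ with $\int h=0$. Then $f\equiv 0$, while $\widehat{F}(\xi)=\mathcal{E}_{S}g(\xi)\,\widehat{h}(\xi_{2})$ need not vanish on $B(0,R)$.

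You keep the phase and dispose of it legitimately. After Minkowski's integral inequality it is unimodular for each fixed $t$, so your chain is rigorous as written.

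The paper's single-averaged-function form can be repaired by expanding
$e^{-2\pi i\xi_{2}t}=\sum_{k\geq 0}\frac{(-2\pi i)^{k}}{k!}(\xi_{2}/R)^{k}(Rt)^{k}$.
This gives $\widehat{F}=\sum_{k}\frac{(-2\pi i)^{k}}{k!}(\xi_{2}/R)^{k}\,\mathcal{E}_{S}f_{k}$ with $f_{k}(x)=\int (Rt)^{k}F(\Phi(x)+(0,t))\,dt$. Each $\|f_{k}\|_{L^{q}}\lesssim R^{-\frac{1}{q^{\prime}}}\|F\|_{L^{q}(\mathcal{N}_{R^{-1}}(S))}$, and the series is summable on $B(0,R)$. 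What that form buys is one function per input, which the paper reuses verbatim for the bilinear transfer to $\prod_{\mu}\widehat{F_{\mu}}$.

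Your route transfers just as easily to the bilinear setting:
apply Minkowski in $L^{\frac{q}{2}}$ over $(t_{1},t_{2})$, valid since $\frac{q}{2}\geq 1$;
apply the bilinear constant to each pair of slices;
then use H\"older separately in $t_{1}$ and $t_{2}$.
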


In order to extend Theorem \ref{simple} to the bilinear setting, we first need
a direct proof of Lemma \ref{fat surface} that avoids duality. For example, if
we define the function $f$ by $f\left(  x\right)  \equiv\int_{-R^{-1}}%
^{R^{-1}}F\left(  \Phi\left(  x\right)  +t\right)  dt$, then
\begin{align}
& \ \ \ \ \ \ \ \ \ \ \ \ \ \ \ \ \ \ \ \ \left\Vert \widehat{F}\right\Vert
_{L^{q}\left(  B\left(  0,R\right)  \right)  }^{q}=\int\left\vert \widehat
{F}\left(  \xi\right)  \right\vert ^{q}d\xi=\int\left\vert \int_{\mathcal{N}%
_{R^{-1}}\left(  S\right)  }e^{-2\pi i\xi\cdot z}F\left(  z\right)
dz\right\vert ^{q}d\xi\label{F^ to Ef}\\
& =\int\left\vert \int_{-R^{-1}}^{R^{-1}}\left\{  \int e^{-2\pi i\xi
\cdot\left(  \Phi\left(  x\right)  +t\right)  }F\left(  \Phi\left(  x\right)
+t\right)  dx\right\}  dt\right\vert ^{q}d\xi=\int\left\vert \int_{-R^{-1}%
}^{R^{-1}}\left\{  \int e^{-2\pi i\xi\cdot\Phi\left(  x\right)  }F\left(
\Phi\left(  x\right)  +t\right)  dx\right\}  dt\right\vert ^{q}d\xi\nonumber\\
& =\int\left\vert \left\{  \int e^{-2\pi i\xi\cdot\Phi\left(  x\right)
}\left(  \int_{-R^{-1}}^{R^{-1}}F\left(  \Phi\left(  x\right)  +t\right)
dt\right)  dx\right\}  \right\vert ^{q}d\xi=\int\left\vert \left\{  \int
e^{-2\pi i\xi\cdot\Phi\left(  x\right)  }f\left(  x\right)  dx\right\}
\right\vert ^{q}d\xi=\int\left\vert \mathcal{E}_{S}f\left(  \xi\right)
\right\vert ^{q}d\xi,\nonumber
\end{align}
and we can continue with%
\begin{align*}
& \int\left\vert \mathcal{E}_{S}f\left(  \xi\right)  \right\vert ^{q}d\xi
\leq\left(  \operatorname*{Four}\nolimits_{L^{q}\rightarrow L^{q}}^{S}\right)
^{q}\int_{U}f\left(  x\right)  ^{q}dx=\left(  \operatorname*{Four}%
\nolimits_{L^{q}\rightarrow L^{q}}^{S}\right)  ^{q}\int_{U}\left\vert
\int_{-R^{-1}}^{R^{-1}}F\left(  \Phi\left(  x\right)  +t\right)  dt\right\vert
^{q}dx\\
& \leq\left(  \operatorname*{Four}\nolimits_{L^{q}\rightarrow L^{q}}%
^{S}\right)  ^{q}\int_{U}\left\vert \left(  R^{-1}\right)  ^{\frac
{1}{q^{\prime}}}\left(  \int_{-R^{-1}}^{R^{-1}}\left\vert F\left(  \Phi\left(
x\right)  +t\right)  \right\vert ^{q}dt\right)  ^{\frac{1}{q}}\right\vert
^{q}dx\\
& =\left(  \operatorname*{Four}\nolimits_{L^{q}\rightarrow L^{q}}^{S}\right)
^{q}\left(  R^{-\frac{1}{q^{\prime}}}\right)  ^{q}\int_{U}\int_{-R^{-1}%
}^{R^{-1}}\left\vert F\left(  \Phi\left(  x\right)  +t\right)  \right\vert
^{q}dtdx\approx\left(  \operatorname*{Four}\nolimits_{L^{q}\rightarrow L^{q}%
}^{S}\right)  ^{q}\left(  R^{-\frac{1}{q^{\prime}}}\right)  ^{q}%
\int_{\mathcal{N}_{R^{-1}}\left(  S\right)  }\left\vert F\left(  z\right)
\right\vert ^{q}dz,
\end{align*}
which proves (\ref{fat ineq}).

\begin{definition}
Let $1<q<\infty$. Define the local discrete bilinear characteristic
$\operatorname*{BiDisc}\nolimits_{\ell^{q}\rightarrow L^{\frac{q}{2}}}%
^{S}\left(  R\right)  $ to be the best constant in the testing inequalities,%
\[
\left\Vert \prod_{\mu=1}^{2}\mathcal{D}_{S}^{\Lambda_{\mu}}a\right\Vert
_{L^{q}\left(  B_{\mathbb{R}^{2}}\left(  A,R\right)  \right)  }^{\frac{1}{2}%
}\leq\operatorname*{BiDisc}\nolimits_{\otimes_{2}\ell^{q}\rightarrow
L^{\frac{q}{2}}}^{S}\left(  R\right)  R^{\frac{1}{q^{\prime}}}\prod_{\mu
=1}^{2}\left\Vert a_{\mu}\right\Vert _{\ell^{q}}^{\frac{1}{2}},\ \text{for
}a_{\mu}\in\ell^{q}\left(  \Lambda_{\mu}\right)  ,
\]
when taken over all $\frac{1}{R}$-separated subsets $\Lambda_{\mu}$ of $S$ and
all disks $B_{\mathbb{R}^{2}}\left(  A,R\right)  $ in $\mathbb{R}^{2}$ and
where the discrete sets $\Lambda_{1}$ and $\Lambda_{2}$ are $\nu
$-separated.\newline In addition, for $\Lambda_{s}=\Phi\left(  \left\{
\lambda_{I}\right\}  _{I\in\mathcal{G}_{s}\left[  U\right]  }\right)  $, we
define $\operatorname*{BiDisc}\nolimits_{\otimes_{2}\ell^{q}\rightarrow
L^{\frac{q}{2}}}^{S;\mathcal{G}}\left(  R\right)  $ to be the best constant in
the testing inequality,%
\[
\left\Vert \prod_{\mu=1}^{2}\mathcal{D}_{S}^{\Lambda_{\mu};\mathcal{G}%
}a\right\Vert _{L^{\frac{q}{2}}\left(  B_{\mathbb{R}^{2}}\left(  A,R\right)
\right)  }^{\frac{1}{2}}\leq\operatorname*{BiDisc}\nolimits_{\otimes_{2}%
\ell^{q}\rightarrow L^{\frac{q}{2}}}^{S;\mathcal{G}}\left(  R\right)
R^{\frac{1}{q^{\prime}}}\prod_{\mu=1}^{2}\left\Vert a_{\mu}\right\Vert
_{\ell^{q}}^{\frac{1}{2}},\ \text{for }a_{\mu}\in\ell^{q}\left(  \Lambda_{\mu
}\right)  ,
\]
with $R=2^{s}$, taken over all $s\in\mathbb{N}$, all sets $\Lambda_{\mu}%
=\Phi\left(  \left\{  \lambda_{I}\right\}  _{I\in\mathcal{G}_{s}\left[
U_{\mu}\right]  }\right)  $, and all disks $B_{\mathbb{R}^{2}}\left(
A,2^{s}\right)  $ in $\mathbb{R}^{2}$, and where the discrete sets
$\Lambda_{1}$ and $\Lambda_{2}$\ are $\nu$-separated.
\end{definition}

Now recall%
\[
\left\Vert \prod_{\mu=1}^{2}\mathcal{E}_{S}f_{\mu}\right\Vert _{L^{\frac{q}%
{2}}}^{\frac{1}{2}}\leq\operatorname*{BiFour}\nolimits_{\otimes_{2}%
L^{q}\rightarrow L^{\frac{q}{2}}}^{S}\prod_{\mu=1}^{2}\left\Vert f_{\mu
}\right\Vert _{L^{q}\left(  U\right)  }^{\frac{1}{2}}\ ,
\]
and apply the above direct argument with $f_{\mu}\left(  x\right)  \equiv
\int_{-R^{-1}}^{R^{-1}}F_{\mu}\left(  \Phi\left(  x\right)  +t\right)  dt$, in
the bilinear setting\ to obtain%
\begin{align*}
& \left\Vert \prod_{\mu=1}^{2}\widehat{F_{\mu}}\right\Vert _{L^{\frac{q}{2}%
}\left(  B\left(  0,R\right)  \right)  }^{\frac{q}{2}}=\left\Vert \prod
_{\mu=1}^{2}\mathcal{E}_{S}f_{\mu}\right\Vert _{L^{\frac{q}{2}}}^{\frac{q}{2}%
}\leq\left(  \operatorname*{BiFour}\nolimits_{\otimes_{2}L^{q}\rightarrow
L^{\frac{q}{2}}}^{S}\right)  ^{q}\prod_{\mu=1}^{2}\left\Vert f_{\mu
}\right\Vert _{L^{q}\left(  U\right)  }^{\frac{1}{2}}\\
& =\left(  \operatorname*{BiFour}\nolimits_{\otimes_{2}L^{q}\rightarrow
L^{\frac{q}{2}}}^{S}\right)  ^{q}\prod_{\mu=1}^{2}\left(  \int_{U}\left\vert
\int_{-R^{-1}}^{R^{-1}}F_{\mu}\left(  \Phi\left(  x\right)  +t\right)
dt\right\vert ^{q}dx\right)  ^{\frac{1}{2}}\\
& \leq\left(  \operatorname*{BiFour}\nolimits_{\otimes_{2}L^{q}\rightarrow
L^{\frac{q}{2}}}^{S}\right)  ^{q}\prod_{\mu=1}^{2}\left(  \int_{U}\left\vert
\left(  R^{-1}\right)  ^{\frac{1}{q^{\prime}}}\left(  \int_{-R^{-1}}^{R^{-1}%
}\left\vert F_{\mu}\left(  \Phi\left(  x\right)  +t\right)  \right\vert
^{q}dt\right)  ^{\frac{1}{q}}\right\vert ^{q}dx\right)  ^{\frac{1}{2}}\\
& =\left(  \operatorname*{BiFour}\nolimits_{\otimes_{2}L^{q}\rightarrow
L^{\frac{q}{2}}}^{S}\right)  ^{q}\left(  R^{-\frac{1}{q^{\prime}}}\right)
^{q}\prod_{\mu=1}^{2}\left(  \int_{U}\int_{-R^{-1}}^{R^{-1}}\left\vert F_{\mu
}\left(  \Phi\left(  x\right)  +t\right)  \right\vert ^{q}dtdx\right)
^{\frac{1}{2}}\\
& \approx\left(  \operatorname*{BiFour}\nolimits_{\otimes_{2}L^{q}\rightarrow
L^{\frac{q}{2}}}^{S}\right)  ^{q}\left(  R^{-\frac{1}{q^{\prime}}}\right)
^{q}\prod_{\mu=1}^{2}\left(  \int_{\mathcal{N}_{R^{-1}}\left(  S\right)
}\left\vert F_{\mu}\left(  z\right)  \right\vert ^{q}dz\right)  ^{\frac{1}{2}%
}=\left(  \operatorname*{BiFour}\nolimits_{\otimes_{2}L^{q}\rightarrow
L^{\frac{q}{2}}}^{S}\right)  ^{q}\left(  R^{-\frac{1}{q^{\prime}}}\right)
^{q}\prod_{\mu=1}^{2}\left\Vert F_{\mu}\right\Vert _{L^{q}\left(
\mathcal{N}_{R^{-1}}\left(  S\right)  \right)  }^{\frac{q}{2}},
\end{align*}
which gives the fattened form of the bilinear Fourier extension inequality,%
\[
\left\Vert \prod_{\mu=1}^{2}\widehat{F_{\mu}}\right\Vert _{L^{\frac{q}{2}%
}\left(  B\left(  0,R\right)  \right)  }^{\frac{1}{2}}\lesssim
\operatorname*{BiFour}\nolimits_{\otimes_{2}L^{q}\rightarrow L^{\frac{q}{2}}%
}^{S}R^{-\frac{1}{q^{\prime}}}\prod_{\mu=1}^{2}\left\Vert F_{\mu}\right\Vert
_{L^{q}\left(  \mathcal{N}_{R^{-1}}\left(  S\right)  \right)  }^{\frac{1}{2}}.
\]

\subsubsection{Discrete bilinear characterization}

Now we can state and prove the bilinear analogue of Theorem \ref{simple},
which will be used in conjunction with (\ref{well known}) above.

\begin{theorem}
\label{bilinear simple}Let $2<q<\infty$ and $R>1$. Then%
\[
\operatorname*{BiFour}\nolimits_{\otimes_{2}L^{q}\rightarrow L^{\frac{q}{2}}%
}^{S}\left(  R\right)  \approx\operatorname*{BiDisc}\nolimits_{\otimes_{2}%
\ell^{q}\rightarrow L^{\frac{q}{2}}}^{S;\mathcal{G}}\left(  R\right)
\approx\operatorname*{BiDisc}\nolimits_{\otimes_{2}\ell^{q}\rightarrow
L^{\frac{q}{2}}}^{S}\left(  R\right)  .
\]

\end{theorem}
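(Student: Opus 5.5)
We prove Theorem \ref{bilinear simple} by the cycle of inequalities
\[
\operatorname*{BiDisc}\nolimits^{S;\mathcal{G}}\leq\operatorname*{BiDisc}\nolimits^{S}\lesssim\operatorname*{BiFour}\nolimits^{S}\lesssim\operatorname*{BiDisc}\nolimits^{S;\mathcal{G}},
\]
all at the same scale $R$. This mirrors the linear argument behind Theorem \ref{simple} in \cite[Chapter 1]{Dem}, with the fattened bilinear inequality just established replacing Lemma \ref{fat surface}. The first inequality is trivial, since the sets $\Lambda_{s}=\Phi(\{\lambda_{I}\}_{I\in\mathcal{G}_{s}[U_{\mu}]})$ are among the $\frac{1}{R}$-separated subsets of $S$ over which $\operatorname*{BiDisc}^{S}$ is a supremum.

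For $\operatorname*{BiDisc}^{S}\lesssim\operatorname*{BiFour}^{S}$, fix $\frac1R$-separated, $\nu$-separated sets $\Lambda_{1},\Lambda_{2}$ and coefficients $a_{\mu}$. After translating the disk we may take $A=0$, since translation only modulates the coefficients. Set $F_{\mu}=\sum_{\lambda\in\Lambda_{\mu}}a_{\lambda}R^{2}\psi(R(\cdot-\lambda))$, where $\psi$ is a Schwartz bump with $\widehat\psi\gtrsim1$ on the unit ball and Fourier support harmless. Then $\widehat{F_{\mu}}(\xi)=\widehat\psi(\xi/R)\,\mathcal{D}_{S}^{\Lambda_{\mu}}a_{\mu}(\xi)$, up to sign conventions, so $|\widehat{F_{\mu}}|\approx|\mathcal{D}_{S}^{\Lambda_\mu}a_{\mu}|$ on $B(0,R)$. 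The bumps essentially lie in $\mathcal{N}_{R^{-1}}(S)$, with Schwartz tails handled by replacing $\psi$ with a compactly supported bump whose Fourier transform is nonvanishing on a ball. They are also disjoint by separation, so $\|F_{\mu}\|_{L^{q}}\approx R^{2}R^{-2/q}\|a_{\mu}\|_{\ell^{q}}=R^{2/q'}\|a_\mu\|_{\ell^q}$. Inserting this into the fattened bilinear inequality gives the bound
\[
\Big\|\prod_{\mu}\mathcal{D}^{\Lambda_\mu}_{S}a_{\mu}\Big\|^{1/2}_{L^{q/2}(B(0,R))}\lesssim\operatorname*{BiFour}\nolimits^{S}\,R^{-1/q'}R^{2/q'}\prod_{\mu}\|a_\mu\|^{1/2}_{\ell^q},
\]
which is the required $R^{1/q'}$ normalization.

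For $\operatorname*{BiFour}^{S}\lesssim\operatorname*{BiDisc}^{S;\mathcal{G}}$, take $f_{\mu}\in L^{q}(U_{\mu})$ and discretize at scale $R^{-1}$. For $x\in[0,2^{-s})$ write
\[
\mathcal{E}f_{\mu}(\xi)=\int_{0}^{2^{-s}}\sum_{I\in\mathcal{G}_{s}[U_\mu]}f_{\mu}(\lambda_{I}+x)\,e^{-2\pi i\xi\cdot\Phi(\lambda_I+x)}\,dx .
\]
For $|\xi|\leq R$ the phase $\xi\cdot\Phi(\lambda_I+x)$ differs from that of a translate-and-modulate of the lattice sum $\mathcal{D}_{S}^{\Lambda_{s}}$ only by a factor depending on $x$ and $\xi$ through $x\xi_{1}$ and $x^{2}\xi_{2}$, plus a cross term $2x\lambda_{I}\xi_{2}$ of size $O(1)$. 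Handling this is the main obstacle. I would remove it by writing $\Phi(\lambda_{I}+x)=\Phi(\lambda_{I})+(x,2x\lambda_{I}+x^{2})$ and absorbing the $\xi$-dependent smooth factor $e^{-2\pi i 2x\lambda_{I}\xi_{2}}$ through a Fourier expansion in $\xi_{2}$ over $B(0,R)$, equivalently by a change of variables $\xi\mapsto(\xi_{1}+2x\xi_{2},\xi_{2})$, which maps $B(0,R)$ into $B(A,CR)$. With either choice, for each fixed $x$ the integrand is a discrete extension $\mathcal{D}^{\Lambda_{s}}_{S}$ with coefficients $f_\mu(\lambda_I+x)$, evaluated on a disk of radius $\approx R$. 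Apply Minkowski's inequality in the $x$-integrals; since there are two of them, bilinearity gives $\int\!\!\int dx\,dy$. Then apply the $\operatorname*{BiDisc}^{S;\mathcal{G}}$ bound for each pair $(x,y)$, cover $B(0,CR)$ by $O(1)$ disks of radius $R$, and use Hölder in $x$ and in $y$. This yields
\[
\prod_{\mu}\int_{0}^{2^{-s}}\Big(\sum_{I}|f_{\mu}(\lambda_{I}+x)|^{q}\Big)^{1/q}dx\leq\prod_{\mu}R^{-1/q'}\|f_{\mu}\|_{L^q},
\]
and the factor $R^{-1/q'}$ cancels the $R^{1/q'}$ in the definition of $\operatorname*{BiDisc}$. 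The bilinear structure causes no new difficulty beyond carrying two independent $x$-parameters, and the $\nu$-separation of $U_{1},U_{2}$ passes directly to $\Lambda_{1},\Lambda_{2}$.
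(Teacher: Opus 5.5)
Your argument is essentially the paper's. The inequality $\operatorname*{BiDisc}^{S}\lesssim\operatorname*{BiFour}^{S}$ is proved the same way: bumps of scale $R^{-1}$ at the points of $\Lambda_\mu$, fed into the fattened bilinear inequality. The converse is also the same in outline: discretize $\mathcal{E}f_\mu$ over the shifts $x\in[0,2^{-s})$, apply the discrete bound for each shift, then H\"older. Your Minkowski step with two independent shift parameters $x,y$ is the right way to handle the bilinear discretization, and is cleaner than the paper's display, which places both factors under a single $L^{q/2}(dx)$ norm.

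The step you call the main obstacle is self-inflicted. For each fixed $x$, the set $\{\Phi(\lambda_I+x)\}_{I\in\mathcal{G}_s[U_\mu]}$ is itself a $\frac1R$-separated subset of $S$, and it is $\nu$-separated from its partner. The paper therefore applies the discrete characteristic directly to it, with coefficients $f_\mu(\lambda_I+x)$ and no phase correction. The grid-restricted characteristic is meant to range over these shifted grids, as the phrase ``for $x\in[0,2^{-s})$'' in the linear definition of $\operatorname*{Disc}^{S;\mathcal{G}}$ indicates.

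If you insist on the unshifted grid $\Lambda_s$, your two remedies are not equivalent in the bilinear setting. The shear $A_x:\xi\mapsto(\xi_1+2x\xi_2,\xi_2)$ depends on the shift. After Minkowski the two factors carry different shifts $x\neq y$, so the integrand becomes $\mathcal{D}_{S}^{\Lambda_{s}}a_{1}(A_{x}\xi)\,\mathcal{D}_{S}^{\Lambda_{s}}a_{2}(A_{y}\xi)$. This is not a bilinear discrete extension evaluated at a common point, and $\operatorname*{BiDisc}$ does not control it without an extra locally-constant argument.

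The expansion route does work, for example via the Taylor expansion
\[
e^{-4\pi i x\xi_2\lambda_I}=\sum_{k\geq0}\frac{(-4\pi i x\xi_2)^k}{k!}\,\lambda_I^k ,
\]
where $|x\xi_2|\leq1$ on $B(0,R)$ and $|\lambda_I|\leq1$. Each term is a function of $\xi$ bounded by $(4\pi)^k/k!$, times a discrete extension over $\Lambda_s$ with coefficients $\lambda_I^k f_\mu(\lambda_I+x)$. Those coefficients have $\ell^q$ norm at most that of $f_\mu(\lambda_I+x)$. The resulting double series in $(k,l)$ is summed by the triangle inequality in $L^{q/2}$, which is valid since $q>2$, at the cost of a factor $e^{8\pi}$. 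With this choice, covering $B(0,CR)$ by disks is no longer needed.
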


\begin{proof}
We write%
\begin{align}
& \left\Vert \prod_{\mu=1}^{2}\sum_{\lambda_{\mu}\in\Lambda_{\mu}\subset
S}a_{\lambda_{\mu}}e^{i\xi\cdot\lambda_{\mu}}\right\Vert _{L^{\frac{q}{2}%
}\left(  B_{\mathbb{R}^{2}}\left(  \mathbf{c},R\right)  \right)  }^{\frac
{1}{2}}\leq\left\Vert \widehat{\varphi_{R^{-1}}}\left(  \xi\right)  ^{2}%
\prod_{\mu=1}^{2}\sum_{\lambda_{\mu}\in\Lambda_{\mu}\subset S}a_{\lambda_{\mu
}}e^{i\left(  \xi-\mathbf{c}\right)  \cdot\lambda_{\mu}}\right\Vert
_{L^{\frac{q}{2}}\left(  B_{\mathbb{R}^{2}}\left(  0,R\right)  \right)
}^{\frac{1}{2}}\label{a to F}\\
& =\left\Vert \prod_{\mu=1}^{2}\sum_{\lambda_{\mu}\in\Lambda_{\mu}\subset
S}b_{\lambda_{\mu}}\widehat{\varphi_{R^{-1}}\left(  \cdot+\lambda_{\mu
}\right)  }\right\Vert _{L^{\frac{q}{2}}\left(  B_{\mathbb{R}^{2}}\left(
0,R\right)  \right)  }^{\frac{1}{2}}=\left\Vert \prod_{\mu=1}^{2}%
\widehat{F_{\mu}}\right\Vert _{L^{q}\left(  B_{\mathbb{R}^{2}}\left(
0,R\right)  \right)  }^{\frac{1}{2}},\nonumber
\end{align}
where $b_{\lambda_{\mu}}=e^{-i\mathbf{c}\cdot\lambda_{\mu}}a_{\lambda_{\mu}}$
and $F_{\mu}\left(  z\right)  =\sum_{\lambda_{\mu}\in\Lambda_{\mu}\subset
S}b_{\lambda_{\mu}}\varphi_{R^{-1}}\left(  z+\lambda_{\mu}\right)  $. Next we
continue with%
\begin{align*}
& \left\Vert \prod_{\mu=1}^{2}\widehat{F_{\mu}}\right\Vert _{L^{q}\left(
B_{\mathbb{R}^{2}}\left(  0,R\right)  \right)  }^{\frac{1}{2}}\lesssim
\operatorname*{BiFour}\nolimits_{\otimes_{2}L^{q}\rightarrow L^{\frac{q}{2}}%
}^{S}\left(  R\right)  R^{-\frac{1}{q^{\prime}}}\prod_{\mu=1}^{2}\left\Vert
\sum_{\lambda_{\mu}\in\Lambda_{\mu}\subset S}b_{\lambda_{\mu}}\varphi_{R^{-1}%
}\left(  z+\lambda_{\mu}\right)  \right\Vert _{L^{q}\left(  \mathcal{N}%
_{R^{-1}}\left(  S\right)  \right)  }^{\frac{1}{2}}\\
& \approx\operatorname*{BiFour}\nolimits_{\otimes_{2}L^{q}\rightarrow
L^{\frac{q}{2}}}^{S}R^{-\frac{1}{q^{\prime}}}\prod_{\mu=1}^{2}\left\vert
b\right\vert _{\ell^{q}}^{\frac{1}{2}}\left\Vert \varphi_{R^{-1}}\right\Vert
_{L^{q}\left(  \mathbb{R}^{2}\right)  }\approx R^{\frac{1}{q^{\prime}}%
}\operatorname*{BiFour}\nolimits_{\otimes_{2}L^{q}\rightarrow L^{\frac{q}{2}}%
}^{S}\left(  R\right)  \left\vert a\right\vert _{\ell^{q}}\ ,
\end{align*}
since $\left\Vert \varphi_{R^{-1}}\right\Vert _{L^{q}\left(  \mathbb{R}%
^{d}\right)  }\approx R^{2}R^{-\frac{2}{q}}$, which shows that
$\operatorname*{BiDisc}\nolimits_{\otimes_{2}\ell^{q}\rightarrow L^{\frac
{q}{2}}}^{S}\left(  R\right)  \lesssim\operatorname*{BiFour}\nolimits_{\otimes
_{2}L^{q}\rightarrow L^{\frac{q}{2}}}^{S}\left(  R\right)  $.

Finally, we see that H\"{o}lder's inequality, followed by Fubini's theorem,
gives%
\begin{align*}
& \left\Vert \prod_{\mu=1}^{2}\mathcal{E}f_{\mu}\right\Vert _{L^{\frac{q}{2}%
}\left(  B\left(  0,R\right)  \right)  }=\left\Vert \prod_{\mu=1}^{2}%
\int_{\left[  0,2^{-s}\right)  }\sum_{I_{\mu}\in\mathcal{G}_{s}\left[  U_{\mu
}\right]  }f\left(  \lambda_{I_{\mu}}+x\right)  e^{-2\pi i\Phi\left(
\lambda_{I_{\mu}}+x\right)  \cdot\xi}dx\right\Vert _{L^{\frac{q}{2}}\left(
B\left(  0,R\right)  \right)  }\\
& \leq\left\Vert \prod_{\mu=1}^{2}2^{-\frac{1}{q^{\prime}}s}\left\Vert
\sum_{I_{\mu}\in\mathcal{G}_{s}\left[  U_{\mu}\right]  }f\left(
\lambda_{I_{\mu}}+x\right)  e^{-2\pi i\Phi\left(  \lambda_{I_{\mu}}+x\right)
\cdot\xi}\right\Vert _{L^{q}\left(  \left[  0,2^{-s}\right)  \right)
}\right\Vert _{L^{\frac{q}{2}}\left(  B\left(  0,R\right)  \right)  }\\
& =2^{-\frac{2}{q^{\prime}}s}\left\Vert \left\Vert \prod_{\mu=1}^{2}%
\sum_{I_{\mu}\in\mathcal{G}_{s}\left[  U_{\mu}\right]  }f\left(
\lambda_{I_{\mu}}+x\right)  e^{-2\pi i\Phi\left(  \lambda_{I_{\mu}}+x\right)
\cdot\xi}\right\Vert _{L^{q}\left(  B\left(  0,R\right)  \right)  }\right\Vert
_{L^{\frac{q}{2}}\left(  \left[  0,2^{-s}\right)  \right)  }\\
& \leq\operatorname*{BiDisc}\nolimits_{\otimes_{2}\ell^{q}\rightarrow
L^{\frac{q}{2}}}^{S}\left(  R\right)  ^{2}2^{-\frac{2}{q^{\prime}}s}\left\Vert
\prod_{\mu=1}^{2}2^{\frac{1}{q^{\prime}}s}\left\Vert f\left(  \lambda_{I_{\mu
}}+x\right)  \right\Vert _{\ell^{q}\left(  \mathcal{G}_{s}\left[  U_{\mu
}\right]  \right)  }\right\Vert _{L^{\frac{q}{2}}\left(  \left[
0,2^{-s}\right)  \right)  },
\end{align*}
which equals%
\begin{align*}
& \operatorname*{BiDisc}\nolimits_{\otimes_{2}\ell^{q}\rightarrow L^{\frac
{q}{2}}}^{S}\left(  R\right)  ^{2}\left(  \int_{\left[  0,2^{-s}\right)
}\left(  \sum_{I_{1}\in\mathcal{G}_{s}\left[  U_{1}\right]  }\left\vert
f\left(  \lambda_{I_{1}}+x\right)  \right\vert ^{q}\right)  ^{\frac{1}{2}%
}\left(  \sum_{I_{2}\in\mathcal{G}_{s}\left[  U_{2}\right]  }\left\vert
f\left(  \lambda_{I_{2}}+x\right)  \right\vert ^{q}\right)  ^{\frac{1}{2}%
}dx\right)  ^{\frac{2}{q}}\\
& \leq\operatorname*{BiDisc}\nolimits_{\otimes_{2}\ell^{q}\rightarrow
L^{\frac{q}{2}}}^{S}\left(  R\right)  ^{2}\left(  \int_{\left[  0,2^{-s}%
\right)  }\sum_{I_{1}\in\mathcal{G}_{s}\left[  U_{1}\right]  }\left\vert
f\left(  \lambda_{I_{1}}+x\right)  \right\vert ^{q}dx\right)  ^{\frac{1}{q}%
}\left(  \int_{\left[  0,2^{-s}\right)  }\sum_{I_{2}\in\mathcal{G}_{s}\left[
U_{2}\right]  }\left\vert f\left(  \lambda_{I_{2}}+x\right)  \right\vert
^{q}dx\right)  ^{\frac{1}{q}}\\
& =\operatorname*{BiDisc}\nolimits_{\otimes_{2}\ell^{q}\rightarrow L^{\frac
{q}{2}}}^{S}\left(  R\right)  ^{2}\left\Vert f_{1}\right\Vert _{L^{q}%
}\left\Vert f_{2}\right\Vert _{L^{q}},
\end{align*}
which proves%
\[
\operatorname*{BiFour}\nolimits_{\otimes_{2}L^{q}\rightarrow L^{\frac{q}{2}}%
}^{S}\left(  R\right)  \lesssim\operatorname*{BiDisc}\nolimits_{\otimes
_{2}\ell^{q}\rightarrow L^{\frac{q}{2}}}^{S}\left(  R\right)  \ ,
\]
and completes the proof of Theorem \ref{bilinear simple}.
\end{proof}

\begin{remark}
The Cauchy-Schwarz inequality implies the elementary result
\[
\operatorname*{BiDisc}\nolimits_{\otimes_{2}\ell^{q}\rightarrow L^{\frac{q}%
{2}}}^{S}\left(  R\right)  \lesssim\operatorname*{Disc}\nolimits_{\ell
^{q}\rightarrow L^{q}}^{S}\left(  R\right)  \lesssim C_{\varepsilon
}R^{\varepsilon}\operatorname*{BiDisc}\nolimits_{\otimes_{2}\ell
^{q}\rightarrow L^{\frac{q}{2}}}^{S}\left(  R\right)  ,
\]
which yields an easy proof that%
\[
\operatorname*{BiDisc}\nolimits_{\otimes_{2}\ell^{q}\rightarrow L^{\frac{q}%
{2}}}^{S}\left(  R\right)  \approx_{\varepsilon}\operatorname*{BiFour}%
\nolimits_{\otimes_{2}L^{q}\rightarrow L^{\frac{q}{2}}}^{S}\left(  R\right)  .
\]
Indeed,%
\begin{align*}
& \left\Vert \prod_{\mu=1}^{2}\mathcal{D}_{S}^{\Lambda_{\mu}}a_{\mu
}\right\Vert _{L^{\frac{q}{2}}\left(  B_{\mathbb{R}^{2}}\left(  A,R\right)
\right)  }^{\frac{q}{2}}=\int_{B_{\mathbb{R}^{2}}\left(  A,R\right)
}\left\vert \mathcal{D}_{S}^{\Lambda_{1}}a_{1}\left(  \xi\right)  \right\vert
^{\frac{q}{2}}\left\vert \mathcal{D}_{S}^{\Lambda_{2}}a_{2}\left(  \xi\right)
\right\vert ^{\frac{q}{2}}d\xi\\
& \leq\left(  \int_{B_{\mathbb{R}^{2}}\left(  A,R\right)  }\left\vert
\mathcal{D}_{S}^{\Lambda_{1}}a_{1}\left(  \xi\right)  \right\vert ^{q}%
d\xi\right)  ^{\frac{1}{2}}\left(  \int_{B_{\mathbb{R}^{2}}\left(  A,R\right)
}\left\vert \mathcal{D}_{S}^{\Lambda_{2}}a_{2}\left(  \xi\right)  \right\vert
^{q}d\xi\right)  ^{\frac{1}{2}}\\
& \leq\left(  \operatorname*{Disc}\nolimits_{\ell^{q}\rightarrow L^{q}}%
^{S}\left(  R\right)  ^{q}R^{\frac{q}{q^{\prime}}}\left\Vert a_{1}\right\Vert
_{\ell^{q}}^{q}\right)  ^{\frac{1}{2}}\left(  \operatorname*{Disc}%
\nolimits_{\ell^{q}\rightarrow L^{q}}^{S}\left(  R\right)  ^{q}R^{\frac
{q}{q^{\prime}}}\prod_{\mu=1}^{2}\left\Vert a_{2}\right\Vert _{\ell^{q}}%
^{q}\right)  ^{\frac{1}{2}}\\
& =\operatorname*{Disc}\nolimits_{\ell^{q}\rightarrow L^{q}}^{S}\left(
R\right)  R^{\frac{1}{q^{\prime}}}\left\Vert a_{1}\right\Vert _{\ell^{q}%
}^{\frac{1}{2}}\left\Vert a_{2}\right\Vert _{\ell^{q}}^{\frac{1}{2}},
\end{align*}
which shows that $\operatorname*{BiDisc}\nolimits_{\otimes_{2}\ell
^{q}\rightarrow L^{\frac{q}{2}}}^{S}\left(  R\right)  \leq\operatorname*{Disc}%
\nolimits_{\ell^{q}\rightarrow L^{q}}^{S}\left(  R\right)  $. Conversely we
have
\[
\operatorname*{Disc}\nolimits_{\ell^{q}\rightarrow L^{q}}^{S}\left(  R\right)
\leq\operatorname*{Four}\nolimits_{L^{q}\rightarrow L^{q}}^{S}\left(
R\right)  \leq C_{\varepsilon}R^{\varepsilon}\operatorname*{BiFour}%
\nolimits_{\otimes_{2}L^{q}\rightarrow L^{\frac{q}{2}}}^{S}\left(  R\right)
\approx C_{\varepsilon}R^{\varepsilon}\operatorname*{BiDisc}\nolimits_{\otimes
_{2}\ell^{q}\rightarrow L^{\frac{q}{2}}}^{S}\left(  R\right)  .
\]

\end{remark}

\subsection{Step 4: bilinear wave packet estimates}

We now estimate the $L^{\frac{q}{2}}\left(  Q\left(  0,2^{\frac{s}{1-\delta}%
}\right)  \right)  $ norm of the product function,%
\begin{align}
& \prod_{\mu=1}^{2}\mathcal{E}f_{\mu}\left(  \xi\right)  =\sum_{\alpha}%
\prod_{\mu=1}^{2}R^{-\frac{1}{2}}\sum_{\substack{\left(  K_{\mu},L_{\mu
}\right)  \in\mathcal{G}_{-\frac{1}{2}s}\left(  \left[  -2^{s},2^{s}\right]
\right)  \times\mathcal{G}_{\frac{1}{2}s}\left[  U_{\mu}\right]  \\\left(
K_{\mu},L_{\mu}\right)  \sim\alpha}}\widetilde{f_{\mu,L}}\left(
\lambda_{K_{\mu}}\right)  e^{-2\pi i\left(  \xi_{1}\ell_{\mu}+\xi_{2}\ell
_{\mu}^{2}\right)  }\label{must now}\\
& =R^{-1}\sum_{\alpha}\sum_{\substack{\left(  K_{1},L_{1}\right)
\in\mathcal{G}_{-\frac{1}{2}s}\left(  \left[  -2^{s},2^{s}\right]  \right)
\times\mathcal{G}_{\frac{1}{2}s}\left(  \left[  -1,1\right]  \right)
\\\left(  K_{1},L_{1}\right)  \sim\alpha}}\nonumber\\
& \times\sum_{\substack{\left(  K_{2}^{\prime},L_{2}^{\prime}\right)
\in\mathcal{G}_{-\frac{1}{2}s}\left(  \left[  -2^{s},2^{s}\right]  \right)
\times\mathcal{G}_{\frac{1}{2}s}\left[  U_{\mu}\right]  \\\left(  K_{2}%
,L_{2}\right)  \sim\alpha}}\widetilde{f_{1,L}}\left(  \lambda_{K_{1}}\right)
\widetilde{f_{2,L}}\left(  \lambda_{K_{2}}\right)  e^{-2\pi i\xi_{1}\left(
\ell_{1}+\ell_{2}\right)  +\xi_{2}\left(  \ell_{1}^{2}+\ell_{2}^{2}\right)
},\nonumber
\end{align}
where%
\[
\widetilde{f_{\mu,L_{\mu}}}\left(  \lambda_{K_{\mu}}\right)  =R^{\frac{1}{4}%
}\widehat{f_{\mu,L_{\mu}}}\left(  \lambda_{K_{\mu}}\right)  ,
\]
and where for a fixed square $\alpha$, we write $\left(  K,L\right)
\sim\alpha$ to mean $T_{K,L}\cap\alpha\neq\emptyset$.

However, for convenience we use here the smaller square $Q\left(
0,2^{s}\right)  $, as the larger square $Q\left(  0,2^{\frac{s}{1-\delta}%
}\right)  $ only introduces an acceptable small growth factor of the form
$2^{c\delta}$. Thus we decompose the upper rectangle $Q_{+}\left(
0,2^{s}\right)  $ into roughly $R$ squares $\alpha$, each of side length
$R^{\frac{1}{2}}$. Then for each $L\in\mathcal{G}_{\frac{1}{2}s}\left[
U\right]  $ there is essentially a unique $K\in\mathcal{G}_{-\frac{1}{2}%
s}\left(  \left[  -2^{s},2^{s}\right]  \right)  $ such that $\alpha\cap
T_{K,L}\neq\emptyset$.

We begin with the wave packet formula for the product $\prod_{\mu=1}%
^{2}\mathbb{E}_{\mathbb{T}_{s}}\mathcal{E}f_{\mu}$ from (\ref{must now}),%
\begin{align*}
& \prod_{\mu=1}^{2}\mathcal{E}f_{\mu}\left(  \xi\right)  =R^{-\frac{1}{2}}%
\sum_{\alpha}\sum_{\substack{\left(  K_{1},L_{1}\right)  \in\mathcal{G}%
_{-\frac{1}{2}s}\left[  \left[  -2^{s},2^{s}\right)  \right]  \times
\mathcal{G}_{\frac{1}{2}s}\left[  U_{1}\right]  \\\left(  K_{1},L_{1}\right)
\sim Q}}\\
& \ \ \ \ \ \ \ \ \ \ \times\sum_{\substack{\left(  K_{2}^{\prime}%
,L_{2}^{\prime}\right)  \in\mathcal{G}_{-\frac{1}{2}s}\left[  \left[
-2^{s},2^{s}\right)  \right]  \times\mathcal{G}_{\frac{1}{2}s}\left[
U_{2}\right]  \\\left(  K_{2},L_{2}\right)  \sim\alpha}}\widehat{f_{1,L}%
}\left(  \lambda_{K_{1}}\right)  \widehat{f_{2,L}}\left(  \lambda_{K_{2}%
}\right)  e^{-2\pi i\xi_{1}\left(  \ell_{1}+\ell_{2}\right)  +\xi_{2}\left(
\ell_{1}^{2}+\ell_{2}^{2}\right)  }\mathbf{1}_{\alpha}\left(  \xi\right)  ,
\end{align*}
which is consistent with the wave packet formula for $\mathcal{E}f_{\mu}$ in
(\ref{wp}),%
\[
\mathcal{E}f_{\mu}\left(  \xi\right)  =R^{-\frac{1}{4}}\sum_{\alpha}%
\sum_{\left(  K_{\mu},L_{\mu}\right)  \sim\alpha}e^{-2\pi i\xi\cdot\Phi\left(
\lambda_{L_{\mu}}\right)  }\widehat{f_{L_{\mu}}}\left(  \lambda_{K_{\mu}%
}\right)  \mathbf{1}_{T_{K_{\mu},L_{\mu}}}\left(  \xi\right)  .
\]
Then we compute the power norm%
\begin{align*}
& \left\Vert \prod_{\mu=1}^{2}\mathcal{E}f_{\mu}\right\Vert _{L^{\frac{q}{2}%
}\left(  B\left(  0,R\right)  \right)  }^{\frac{q}{2}}=R^{-1}\sum_{\alpha}%
\int_{\alpha}\prod_{\mu=1}^{2}\left\vert \sum_{\left(  K_{\mu},L_{\mu}\right)
\sim\alpha}e^{-2\pi i\xi\cdot\Phi\left(  \lambda_{L_{\mu}}\right)  }%
\widehat{f_{L_{\mu}}}\left(  \lambda_{K_{\mu}}\right)  \right\vert ^{\frac
{q}{2}}d\xi\\
& =R^{-1}\sum_{\alpha}\int_{\alpha}\prod_{\mu=1}^{2}\left\vert \sum_{L_{\mu
}\in\mathcal{G}_{\frac{1}{2}s}\left[  U_{\mu}\right]  }\widehat{f_{L_{\mu}}%
}\left(  K_{L_{\mu},\alpha}\right)  e^{-2\pi i\xi\cdot\Phi\left(
\lambda_{L_{\mu}}\right)  }\right\vert ^{\frac{q}{2}}d\xi=R^{-1}\sum_{\alpha
}\int_{\alpha}\prod_{\mu=1}^{2}\left\vert \mathcal{D}_{S}^{\Lambda_{\mu}%
}a_{\mu}^{\alpha}\left(  \xi\right)  \right\vert ^{\frac{q}{2}}d\xi
\end{align*}
where we recall,%
\begin{align*}
\mathcal{D}_{S}^{\Lambda_{\mu}}a_{\mu}^{\alpha}\left(  \xi\right)   &
=\sum_{\lambda_{\mu}\in\Lambda_{\mu}\subset S}a_{\lambda_{\mu}}^{\alpha
}e^{i\xi\cdot\lambda_{\mu}}=\sum_{L_{\mu}\in\mathcal{G}_{\frac{1}{2}s}\left[
U_{\mu}\right]  }\widehat{f_{L_{\mu}}}\left(  K_{L_{\mu},\alpha}\right)
e^{i\xi\cdot\Phi\left(  \lambda_{L_{\mu}}\right)  },\\
\text{where }a_{\mu}^{\alpha}  & \equiv\left\{  \widehat{f_{L_{\mu}}}\left(
K_{L_{\mu},\alpha}\right)  \right\}  _{L_{\mu}\in\mathcal{G}_{\frac{1}{2}%
s}\left[  U_{\mu}\right]  }\text{ and }\Lambda_{\mu}\equiv\left\{  \Phi\left(
\lambda_{L_{\mu}}\right)  \right\}  _{L_{\mu}\in\mathcal{G}_{\frac{1}{2}%
s}\left[  U_{\mu}\right]  }.
\end{align*}

Next we rewrite the power norm of $\prod_{\mu=1}^{2}\mathcal{D}_{S}%
^{\Lambda_{\mu}}a_{\mu}^{\alpha}$\ over the square $\alpha$ into a power norm
of an associated Fourier extension over $\alpha$. For this we combine the two
inequalities (\ref{a to F}) and (\ref{F^ to Ef}), i.e.%
\begin{align*}
& \left\Vert \prod_{\mu=1}^{2}\sum_{\lambda_{\mu}\in\Lambda_{\mu}\subset
S}a_{\lambda_{\mu}}e^{i\xi\cdot\lambda_{\mu}}\right\Vert _{L^{\frac{q}{2}%
}\left(  B_{\mathbb{R}^{2}}\left(  \mathbf{c},R\right)  \right)  }^{\frac
{1}{2}}\leq\left\Vert \widehat{\varphi_{R^{-1}}}\left(  \xi\right)  ^{2}%
\prod_{\mu=1}^{2}\sum_{\lambda_{\mu}\in\Lambda_{\mu}\subset S}a_{\lambda_{\mu
}}^{\alpha}e^{i\left(  \xi-\mathbf{c}_{\alpha}\right)  \cdot\lambda_{\mu}%
}\right\Vert _{L^{\frac{q}{2}}\left(  B_{\mathbb{R}^{2}}\left(  0,R\right)
\right)  }^{\frac{1}{2}}\\
& =\left\Vert \prod_{\mu=1}^{2}\sum_{\lambda_{\mu}\in\Lambda_{\mu}\subset
S}b_{\lambda_{\mu}}^{\alpha}\widehat{\varphi_{R^{-1}}\left(  \cdot+\left(
0,\lambda_{\mu}\right)  \right)  }\right\Vert _{L^{\frac{q}{2}}\left(
B_{\mathbb{R}^{2}}\left(  0,R\right)  \right)  }^{\frac{1}{2}}=\left\Vert
\prod_{\mu=1}^{2}\widehat{F_{\mu}^{\alpha}}\right\Vert _{L^{q}\left(
B_{\mathbb{R}^{2}}\left(  0,R\right)  \right)  }^{\frac{1}{2}},
\end{align*}
where
\[
b_{\lambda_{\mu}}^{\alpha}=e^{-i\mathbf{c}_{\alpha}\cdot\Phi\left(
\lambda_{\mu}\right)  }a_{\lambda_{\mu}}\text{ and }F_{\mu}^{\alpha\mathbf{c}%
}\left(  z\right)  =\sum_{\lambda_{\mu}\in\Lambda_{\mu}\subset S}%
b_{\lambda_{\mu}}^{\alpha}\varphi_{R^{-1}}\left(  z+\left(  0,\lambda_{\mu
}\right)  \right)  ,
\]
and%
\begin{align*}
& \left\Vert \widehat{F}\right\Vert _{L^{q}\left(  B\left(  0,R\right)
\right)  }^{q}=\int\left\vert \widehat{F}\left(  \xi\right)  \right\vert
^{q}d\xi=\int\left\vert \int_{\mathcal{N}_{R^{-1}}\left(  S\right)  }e^{-2\pi
i\xi\cdot z}F\left(  z\right)  dz\right\vert ^{q}d\xi\\
& =\int\left\vert \int_{-R^{-1}}^{R^{-1}}\left\{  \int e^{-2\pi i\xi
\cdot\left(  \Phi\left(  x\right)  +\left(  0,t\right)  \right)  }F\left(
\Phi\left(  x\right)  +\left(  0,t\right)  \right)  dx\right\}  dt\right\vert
^{q}d\xi\\
& =\int\left\vert \int_{-R^{-1}}^{R^{-1}}\left\{  \int e^{-2\pi i\xi\cdot
\Phi\left(  x\right)  }F\left(  \Phi\left(  x\right)  +\left(  0,t\right)
\right)  dx\right\}  dt\right\vert ^{q}d\xi\\
& =\int\left\vert \int e^{-2\pi i\xi\cdot\Phi\left(  x\right)  }\left\{
\int_{-R^{-1}}^{R^{-1}}F\left(  \Phi\left(  x\right)  +\left(  0,t\right)
\right)  dt\right\}  dx\right\vert ^{q}d\xi=\int\left\vert \int e^{-2\pi
i\xi\cdot\Phi\left(  x\right)  }f\left(  x\right)  dx\right\vert ^{q}d\xi
=\int\left\vert \mathcal{E}_{S}f\left(  \xi\right)  \right\vert ^{q}d\xi,
\end{align*}
where
\[
f\left(  x\right)  \equiv\int_{-R^{-1}}^{R^{-1}}F\left(  \Phi\left(  x\right)
+\left(  0,t\right)  \right)  dt.
\]
Note that the bilinear version of this last equality is%
\[
\left\Vert \prod_{\mu=1}^{2}\widehat{F_{\mu}}\right\Vert _{L^{\frac{q}{2}%
}\left(  B\left(  0,R\right)  \right)  }^{\frac{q}{2}}=\left\Vert \prod
_{\mu=1}^{2}\mathcal{E}_{S}f_{\mu}\right\Vert _{L^{\frac{q}{2}}\left(
B\left(  0,R\right)  \right)  }^{\frac{q}{2}},\ \ \ \ \ \text{where }f_{\mu
}\left(  x\right)  \equiv\int_{-R^{-1}}^{R^{-1}}F_{\mu}\left(  \Phi\left(
x\right)  +\left(  0,t_{\mu}\right)  \right)  dt_{\mu}.
\]

Altogether then,%
\begin{align*}
& \left\Vert \prod_{\mu=1}^{2}\mathcal{E}f_{\mu}\right\Vert _{L^{\frac{q}{2}%
}\left(  \alpha_{1}^{0}\right)  }^{\frac{q}{2}}=C_{\varepsilon}R^{\varepsilon
}R^{-1}\sum_{\alpha}\left\Vert \prod_{\mu=1}^{2}\mathcal{D}_{S}^{\Lambda_{\mu
}}a_{\mu}^{\alpha}\right\Vert _{L^{\frac{q}{2}}\left(  \alpha_{j_{1}}%
^{1}\right)  }^{\frac{q}{2}}=C_{\varepsilon}R^{\varepsilon}R^{-1}\sum_{\alpha
}\left\Vert \prod_{\mu=1}^{2}\mathcal{D}_{S}^{\Lambda_{\mu}}a_{\mu}^{\alpha
}\right\Vert _{L^{\frac{q}{2}}\left(  \alpha_{j_{1}}^{1}\right)  }^{\frac
{q}{2}}\\
& =C_{\varepsilon}R^{\varepsilon}R^{-1}\sum_{\alpha}\left\Vert \prod_{\mu
=1}^{2}\widehat{F_{\mu}^{\alpha}}\right\Vert _{L^{\frac{q}{2}}\left(
\alpha_{j_{1}}^{1}\right)  }^{\frac{q}{2}}=C_{\varepsilon}R^{\varepsilon
}R^{-1}\sum_{\alpha}\left\Vert \prod_{\mu=1}^{2}\mathcal{E}_{S}f_{\mu}%
^{\alpha}\right\Vert _{L^{\frac{q}{2}}\left(  \alpha_{j_{1}}^{1}\right)
}^{\frac{q}{2}}=C_{\varepsilon}R^{\varepsilon}\sum_{\alpha}\left\Vert
\prod_{\mu=1}^{2}\mathcal{E}_{S}g_{\mu}^{\alpha}\right\Vert _{L^{\frac{q}{2}%
}\left(  \alpha_{j_{1}}^{1}\right)  }^{\frac{q}{2}},
\end{align*}
where $g_{\mu}^{\alpha}=R^{-\frac{1}{4}}f_{\mu}^{\alpha}$. Applying the
definition of the characteristic $\operatorname*{BiFour}\nolimits_{L^{q}%
\rightarrow L^{q}}^{S}\left(  R^{\frac{1}{2}}\right)  $, we thus obtain%
\[
\left\Vert \prod_{\mu=1}^{2}\mathcal{E}f_{\mu}\right\Vert _{L^{\frac{q}{2}%
}\left(  \alpha_{1}^{0}\right)  }^{\frac{q}{2}}\leq C_{\varepsilon
}R^{\varepsilon}\operatorname*{BiFour}\nolimits_{\otimes_{2}L^{q}\rightarrow
L^{\frac{q}{2}}}^{S}\left(  R^{\frac{1}{2}}\right)  \sum_{\alpha}\prod_{\mu
=1}^{2}\left\Vert g_{\mu}^{\alpha}\right\Vert _{L^{\frac{q}{2}}\left(
U\right)  }^{\frac{q}{2}}\ ,
\]
where%
\begin{align*}
g_{\mu}^{\alpha}\left(  x\right)   & =R^{-\frac{1}{4}}\int_{-R^{-\frac{1}{2}}%
}^{R^{-\frac{1}{2}}}F_{\mu}^{\alpha}\left(  \Phi\left(  x\right)  +\left(
0,t_{\mu}\right)  \right)  dt_{\mu}=R^{-\frac{1}{4}}\int_{-R^{-\frac{1}{2}}%
}^{R^{-\frac{1}{2}}}\sum_{\lambda_{\mu}\in\Lambda_{\mu}\subset S}%
b_{\lambda_{\mu}}^{\alpha}\varphi_{R^{-\frac{1}{2}}}\left(  \Phi\left(
x\right)  +\left(  0,t_{\mu}\right)  +\Phi\left(  \lambda_{L_{\mu}}\right)
\right)  dt_{\mu}\\
& =R^{-\frac{1}{4}}\int_{-R^{-\frac{1}{2}}}^{R^{-\frac{1}{2}}}\sum_{L_{\mu}%
\in\mathcal{G}_{\frac{1}{2}s}\left[  U_{\mu}\right]  }e^{-ic_{\alpha}\cdot
\Phi\left(  \lambda_{L_{\mu}}\right)  }\widehat{f_{L_{\mu}}}\left(  K_{L_{\mu
},\alpha}\right)  \varphi_{R^{-\frac{1}{2}}}\left(  \Phi\left(  x\right)
+\left(  0,t_{\mu}\right)  +\Phi\left(  \lambda_{L_{\mu}}\right)  \right)
dt_{\mu}\\
& =\sum_{L_{\mu}\in\mathcal{G}_{\frac{1}{2}s}\left[  U_{\mu}\right]
}e^{-ic_{\alpha}\cdot\Phi\left(  \lambda_{L_{\mu}}\right)  }\widehat
{f_{L_{\mu}}}\left(  K_{L_{\mu},\alpha}\right)  \left\{  R^{-\frac{1}{4}}%
\int_{-R^{-\frac{1}{2}}}^{R^{-\frac{1}{2}}}\varphi_{R^{-\frac{1}{2}}}\left(
\Phi\left(  x\right)  +\left(  0,t_{\mu}\right)  +\Phi\left(  \lambda_{L_{\mu
}}\right)  \right)  dt_{\mu}\right\}  \ .
\end{align*}

The approximate identity $\varphi_{R^{-\frac{1}{2}}}$\ is two-dimensional, so
that $\left\Vert \varphi_{R^{-\frac{1}{2}}}\right\Vert _{L^{\infty}}=R$, and
then the function%
\[
\psi_{L_{\mu}}\left(  x\right)  \equiv R^{-\frac{1}{4}}\int_{-R^{-\frac{1}{2}%
}}^{R^{-\frac{1}{2}}}\varphi_{R^{-\frac{1}{2}}}\left(  \Phi\left(  x\right)
+\left(  0,t_{\mu}\right)  +\Phi\left(  \lambda_{L_{\mu}}\right)  \right)
\]
`resembles' in the sense of Conclusion \ref{gather}, a smooth version of the
indicator $\mathbf{1}_{L_{\mu}}$ times $R^{-\frac{1}{4}}R^{-\frac{1}{2}%
}\left\Vert \varphi_{R^{-\frac{1}{2}}}\right\Vert _{L^{\infty}}=R^{\frac{1}%
{4}}$, and so is $L^{2}$-normalized, i.e. $\left\Vert \psi_{L_{\mu}%
}\right\Vert _{L^{2}}\approx1$.

We thus obtain in the case $q=4$ that%
\begin{align*}
& \left\Vert g_{\mu}^{\alpha}\right\Vert _{L^{2}\left(  U\right)  }^{2}%
=\int_{U}\left\vert \sum_{L_{\mu}\in\mathcal{G}_{\frac{1}{2}s}\left[  U_{\mu
}\right]  }e^{-i\mathbf{c}_{\alpha}\cdot\Phi\left(  \lambda_{L_{\mu}}\right)
}\widehat{f_{L_{\mu}}}\left(  K_{L_{\mu},\alpha}\right)  \psi_{L_{\mu}}\left(
x\right)  \right\vert ^{2}dx\\
& \approx\int_{U}\sum_{L_{\mu}\in\mathcal{G}_{\frac{1}{2}s}\left[  U_{\mu
}\right]  }\left\vert e^{-i\mathbf{c}_{\alpha}\cdot\Phi\left(  \lambda
_{L_{\mu}}\right)  }\widehat{f_{L_{\mu}}}\left(  K_{L_{\mu},\alpha}\right)
\psi_{L_{\mu}}\left(  x\right)  \right\vert ^{2}dx\\
& =\sum_{L_{\mu}\in\mathcal{G}_{\frac{1}{2}s}\left[  U_{\mu}\right]
}\left\vert \widehat{f_{L_{\mu}}}\left(  K_{L_{\mu},\alpha}\right)
\right\vert ^{2}\int_{U}\left\vert \psi_{L_{\mu}}\left(  x\right)  \right\vert
^{2}dx\approx\sum_{L_{\mu}\in\mathcal{G}_{\frac{1}{2}s}\left[  U_{\mu}\right]
}\left\vert \widehat{f_{L_{\mu}}}\left(  K_{L_{\mu},\alpha}\right)
\right\vert ^{2},
\end{align*}
and hence%
\begin{align*}
\sum_{\alpha}\prod_{\mu=1}^{2}\left\Vert g_{\mu}^{\alpha}\right\Vert
_{L^{2}\left(  U\right)  }^{2}  & \approx\sum_{\alpha}\prod_{\mu=1}^{2}%
\sum_{L_{\mu}\in\mathcal{G}_{\frac{1}{2}s}\left[  U_{\mu}\right]  }\left\vert
\widehat{f_{L_{\mu}}}\left(  K_{L_{\mu},\alpha}\right)  \right\vert ^{2}\\
& \approx\sum_{\alpha}\sum_{\left(  K_{\mu}^{\pm},L_{\mu}^{\pm}\right)
\sim\alpha}\widehat{f_{L_{1}}}\left(  K_{L_{1}^{+},\alpha}\right)
\widehat{f_{L_{2}}}\left(  K_{L_{2}^{+},\alpha}\right)  \overline
{\widehat{f_{L_{1}^{-}}}\left(  K_{L_{1}^{-},\alpha}\right)  \widehat
{f_{L_{2}^{-}}}\left(  K_{L_{2}^{-},\alpha}\right)  },
\end{align*}
where we recall that $\left(  K,L\right)  \sim\alpha$ means $T_{K,L}\cap
\alpha\neq\emptyset$.

Now we exploit the fact that%
\begin{equation}
\text{for each fixed pair}\left(  L_{1},L_{2}\right)  \text{, the map }%
\alpha\rightarrow\left(  K_{L_{1},\alpha},K_{L_{2},\alpha}\right)  \text{ is
essentially one-to-one}.\label{1-1}%
\end{equation}
Indeed, if $\operatorname*{dist}\left(  \alpha,\alpha^{\prime}\right)  \gg
R^{\frac{1}{2}}$, then at least of the intervals $L_{\mu}$ must have
$\operatorname*{dist}\left(  K_{L_{\mu},\alpha},K_{L_{\mu},\alpha^{\prime}%
}\right)  \gg R^{\frac{1}{2}}$, as an inspection of the relevant diagram
reveals. This then allows us to apply the Cauchy-Schwarz inequality to obtain%
\begin{align}
& \label{to obtain}\\
& \left\Vert \prod_{\mu=1}^{2}\mathcal{E}f_{\mu}\right\Vert _{L^{2}}%
^{2}\lesssim C_{\varepsilon}R^{\varepsilon}\operatorname*{BiFour}%
\nolimits_{L^{q}\rightarrow L^{q}}^{S}\left(  R^{\frac{1}{2}}\right)
\left\vert \sum_{\alpha}\sum_{\left(  K_{\mu}^{\pm},L_{\mu}^{\pm}\right)
\in\Omega_{\alpha}}\widehat{f_{L_{1}}}\left(  \lambda_{K_{1}^{+}}\right)
\widehat{f_{L_{2}}}\left(  \lambda_{K_{2}^{+}}\right)  \overline
{\widehat{f_{L_{1}^{-}}}\left(  \lambda_{K_{1}^{-}}\right)  \widehat
{f_{L_{2}^{-}}}\left(  \lambda_{K_{2}^{-}}\right)  }\right\vert \nonumber\\
& \leq C_{\varepsilon}R^{\varepsilon}\operatorname*{BiFour}\nolimits_{L^{q}%
\rightarrow L^{q}}^{S}\left(  R^{\frac{1}{2}}\right)  \left(  \sum_{\alpha
}\sum_{\left(  K_{\mu}^{+},L_{\mu}^{+}\right)  \sim\alpha}\left\vert
\widehat{f_{L_{1}}}\left(  K_{1}^{+}\right)  \widehat{f_{L_{2}}}\left(
K_{2}^{+}\right)  \right\vert ^{2}\right)  ^{\frac{1}{2}}\left(  \sum_{\alpha
}\sum_{\left(  K_{\mu}^{-},L_{\mu}^{-}\right)  \sim\alpha}\left\vert
\widehat{f_{L_{1}^{\prime}}}\left(  K_{1}^{-}\right)  \widehat{f_{L_{2}%
^{\prime}}}\left(  K_{2}^{-}\right)  \right\vert ^{2}\right)  ^{\frac{1}{2}%
}\nonumber\\
& =C_{\varepsilon}R^{\varepsilon}\operatorname*{BiFour}\nolimits_{L^{q}%
\rightarrow L^{q}}^{S}\left(  R^{\frac{1}{2}}\right)  \left(  \sum_{\left(
K_{1},L_{1}\right)  }\left\vert \widehat{f_{L_{1}}}\left(  K_{1}\right)
\right\vert ^{2}\right)  \left(  \sum_{\left(  K_{2},L_{2}\right)  }\left\vert
\widehat{f_{L_{2}}}\left(  K_{2}\right)  \right\vert ^{2}\right) \nonumber\\
& =C_{\varepsilon}R^{\varepsilon}\operatorname*{BiFour}\nolimits_{L^{q}%
\rightarrow L^{q}}^{S}\left(  R^{\frac{1}{2}}\right)  \left\Vert
f_{1}\right\Vert _{L^{2}}^{2}\left\Vert f_{2}\right\Vert _{L^{2}}%
^{2}\ ,\nonumber
\end{align}
where in the last line we have used Plancherel's theorem on the torus. Thus we
have proved that for every $\varepsilon>0$, there is a positive constant
$C_{\varepsilon}$ such that
\begin{equation}
\operatorname*{BiFour}\nolimits_{L^{q}\rightarrow L^{q}}^{S}\left(  R\right)
\leq C_{\varepsilon}R^{\varepsilon}\operatorname*{BiFour}\nolimits_{L^{q}%
\rightarrow L^{q}}^{S}\left(  R^{\frac{1}{2}}\right)  ,\ \ \ \ \ \text{for all
}R\gg1.\label{induction}%
\end{equation}

\subsection{Step 5: induction on scales}

Iterating the inequality (\ref{induction}) gives%
\begin{align*}
\operatorname*{BiFour}\nolimits_{\otimes_{2}L^{q}\rightarrow L^{\frac{q}{2}}%
}^{S}\left(  R\right)   & \leq C_{\varepsilon}R^{\varepsilon}%
\operatorname*{BiFour}\nolimits_{\otimes_{2}L^{q}\rightarrow L^{\frac{q}{2}}%
}^{S}\left(  R^{\frac{1}{2}}\right) \\
& \leq\left(  C_{\varepsilon}R^{\varepsilon}\right)  \left(  C_{\varepsilon
}R^{\frac{1}{2}\varepsilon}\right)  \operatorname*{BiFour}\nolimits_{\otimes
_{2}L^{q}\rightarrow L^{\frac{q}{2}}}^{S}\left(  R^{\frac{1}{4}}\right) \\
& \vdots\\
& \leq\left(  C_{\varepsilon}R^{\varepsilon}\right)  \left(  C_{\varepsilon
}R^{\frac{1}{2}\varepsilon}\right)  ...\left(  C_{\varepsilon}R^{\frac
{1}{2^{n-1}}\varepsilon}\right)  \operatorname*{BiFour}\nolimits_{\otimes
_{2}L^{q}\rightarrow L^{\frac{q}{2}}}^{S}\left(  R^{\frac{1}{2^{n}}}\right)  ,
\end{align*}
for all $n\in\mathbb{N}$ such that $R^{\frac{1}{2^{n}}}=2^{\frac{1}{2^{n}}%
s}\geq2$. If we choose $n$\ so that $\frac{1}{2^{n}}s\approx2$, then
$\operatorname*{BiFour}\nolimits_{\otimes_{2}L^{q}\rightarrow L^{\frac{q}{2}}%
}^{S}\left(  2^{\frac{1}{2^{n}}s}\right)  $ is dominated by a constant $C_{0}$
independent of $s$, and $n\approx\log_{2}s$, which implies%
\begin{align*}
& \left(  C_{\varepsilon}R^{\varepsilon}\right)  \left(  C_{\varepsilon
}R^{\frac{1}{2}\varepsilon}\right)  ...\left(  C_{\varepsilon}R^{\frac
{1}{2^{n-1}}\varepsilon}\right)  =\left(  C_{\varepsilon}2^{\varepsilon
s}\right)  \left(  C_{\varepsilon}2^{\frac{1}{2}\varepsilon s}\right)
...\left(  C_{\varepsilon}2^{\frac{1}{2^{n-1}}\varepsilon s}\right) \\
& \leq C_{\varepsilon}^{\log_{2}s}2^{2\varepsilon s}=2^{\log_{2}%
C_{\varepsilon}\log_{2}s}2^{2\varepsilon s}=s^{\log_{2}C_{\varepsilon}%
}2^{2\varepsilon s}\ll C_{\varepsilon}2^{3\varepsilon s}.
\end{align*}

Thus we conclude that for every $\varepsilon^{\prime}>0$, there is a positive
constant $C_{\varepsilon^{\prime}}$ such that%
\[
\operatorname*{BiFour}\nolimits_{\otimes_{2}L^{q}\rightarrow L^{\frac{q}{2}}%
}^{S}\left(  R\right)  \leq C_{\varepsilon^{\prime}}R^{\varepsilon^{\prime}%
},\ \ \ \ \ \text{for all }s>1,
\]
which is a local consequence of the bilinear inequality of Bennett, Carbery
and Tao \cite{BeCaTa},
\[
\left\Vert \prod_{\mu=1}^{2}\mathcal{E}f_{\mu}\right\Vert _{L^{2}}%
\lesssim\prod_{\mu=1}^{2}\left\Vert f_{\mu}\right\Vert _{L^{2}}\ ,
\]
and using \textbf{Step 1}, this completes the argument that the Fourier
extension theorem for the parabola in the plane holds.

However, we made crucial use of the special nature of the critical index $4$
in order to exploit the injectivity property (\ref{1-1}) in \textbf{Step 4}
above. This defect is addressed in the third argument, to which we now turn.

\section{Smooth Alpert testing argument}

In order to avoid depending on the special nature of the critical exponent $4
$ as a positive even integer, we will modify the five steps in the previous
section. We begin with the modification of \textbf{Step1} where we test over
smooth Alpert projections as in \cite{RiSa3}. Then we adapt \textbf{Step 2}
and \textbf{Step 3} to smooth Alpert wavelets, before introducing an
additional decomposition of the smooth Alpert wavelet coefficients in
\textbf{Step 4} in order to modify the estimates to avoid using any special
nature of the critical index $4$. Finally, the recursion in \textbf{Step 5}
remains essentially the same, yielding a local smooth Alpert form of the
bilinear inequality in \cite{BeCaTa}, which by \textbf{Step 1}, completes the
argument that the Fourier extension theorem holds for the parabola using
neither Fefferman's convolution decoupling nor the special nature of the
critical index $4$ being a positive even integer.

More precisely we begin by replacing the Haar decomposition used in
(\ref{Haar decomp}) with an Alpert decomposition having $\kappa\geq1$
vanishing moments. To illustrate, in the case $\kappa=2$, this decomposition
of Alpert \cite{Alp} now involves two mother wavelets $h$ and $k$ supported on
$\left[  -1,1\right]  $, that are polynomials of degree at most $1$ on each
child, and that have two vanishing moments each, e.g. $h\left(  x\right)
=\frac{\sqrt{3}}{\sqrt{2}}\left(  2\left\vert x\right\vert -1\right)  $ and
$k\left(  x\right)  =\frac{x}{\left\vert x\right\vert }\frac{1}{\sqrt{2}%
}\left(  3\left\vert x\right\vert -2\right)  $. After dilating, translating
and $L^{2}$-normalizing these mother wavelets to each dyadic interval
$I\in\mathcal{D}\left[  I_{0}\right]  $, we obtain an orthonormal basis
$\left\{  h_{I},k_{I}\right\}  _{I\in\mathcal{D}\left[  I_{0}\right]  }$ of
\[
L_{\operatorname*{two}\operatorname*{van}\operatorname*{mom}}^{2}\left(
I_{0}\right)  \equiv\left\{  f\in L^{2}\left(  I_{0}\right)  :\int_{I_{0}%
}f\left(  x\right)  dx=\int_{I_{0}}f\left(  x\right)  xdx=0\right\}  ,
\]
and a frame for $L_{\operatorname*{two}\operatorname*{van}\operatorname*{mom}%
}^{p}\left(  I_{0}\right)  $ for $1<p<\infty$.

Finally, this basis is smoothed out by convolving the mother wavelets with
a\ special smooth cutoff function $\phi$ adapted to a small interval $\left[
-\eta,\eta\right]  $. Now we let $h_{I}^{\eta}$ and $k_{I}^{\eta}$ be the
appropriate dilates, translates and normalizations of $h\ast\phi$ and
$k\ast\phi$ respectively for all dyadic intervals $I\in\mathcal{D}$. We are
moving to the entire real line here because the smooth wavelets are no longer
orthogonal in $L^{2}$. However, if $\eta$ is chosen sufficiently small (and
there are enough vanishing moments in higher dimensions) then $\left\{
h_{I}^{\eta},k_{I}^{\eta}\right\}  _{I\in\mathcal{D}}$ is a frame for
$L^{p}\left(  \mathbb{R}\right)  $, more precisely there is a bounded
invertible linear map $S$ on $L^{p}\left(  \mathbb{R}\right)  $ such that if
\[
\bigtriangleup_{I}^{\eta,1}f\equiv\left\langle S^{-1}f,h_{I}\right\rangle
h_{I}^{\eta}\text{ and }\bigtriangleup_{I}^{\eta,2}f\equiv\left\langle
S^{-1}f,k_{I}\right\rangle k_{I}^{\eta},
\]
then the projections $\bigtriangleup_{I}^{\eta,j}$ satisfy%
\[
f=\sum_{j=1}^{2}\sum_{I\in\mathcal{D}}\bigtriangleup_{I}^{\eta,j}f\text{,
\ \ \ \ both pointwise a.e. and in }L^{p}\text{ norm,}%
\]
and the square function $\mathcal{S}f\equiv\left(  \sum_{j=1}^{2}\sum
_{I\in\mathcal{D}}\left\vert \bigtriangleup_{I}^{\eta,j}f\right\vert
^{2}\right)  ^{\frac{1}{2}}$ satisfies $\left\Vert \mathcal{S}f\right\Vert
_{L^{p}\left(  \mathbb{R}\right)  }\approx\left\Vert f\right\Vert
_{L^{p}\left(  \mathbb{R}\right)  }$. We will abuse notation slightly from now
on by writing simply $\bigtriangleup_{I}^{\eta}$ in place of $\bigtriangleup
_{I}^{\eta,j}$. See \cite{Saw7} for these facts, including the special nature
of the cutoff function $\phi$.

Finally, this decomposition was extended in \cite{Saw8} to projections
$\bigtriangleup_{I}^{\eta}f$ defined a bit differently, namely $\bigtriangleup
_{I}^{\eta}f\equiv\left\langle T^{-1}f,h_{I}^{\eta}\right\rangle h_{I}^{\eta}$
with $T=SS^{\ast}$ and where the Alpert wavelet inside the inner product is
now also smooth. This extension of the smooth Alpert decomposition was the
first step in the characterization of the Fourier extension inequality in
\cite{RiSa4} with perturbed projections defined in yet another different way,
namely $\bigtriangleup_{I}^{\eta}f\equiv\left\langle f,h_{I}^{\eta
}\right\rangle h_{I}^{\eta}$ in which the bounded invertible operator $T$ has
now been replaced with the identity operator. It is these perturbed
pseudoprojections from \cite{RiSa4} that we will use here in this third argument.

\begin{notation}
We write the perturbed smooth Alpert coefficient $\left\langle f,h_{I}^{\eta
}\right\rangle $ as $\breve{f}\left(  I\right)  $, rather than $\widehat
{f}\left(  I\right)  $, as we will reserve the notation $\widehat{f_{L}}$ for
the sequence of Fourier series coefficients of $f_{L}=\mathbf{1}_{L}f$.
\end{notation}

We now present a third and final argument in five steps that\ avoids using any
variation on Fefferman's convolution decoupling, as well as any use of the
special nature of the critical index $4$ being a positive even integer:

\begin{enumerate}
\item Establish a single scale smooth Alpert testing characterization.

\item Expand smooth Alpert projections into their wave packet decompositions.

\item Expand the Alpert coefficient sequence by the discrete Fourier transform.

\item Establish bilinear estimates to be iterated.

\item Perform an induction on scales.
\end{enumerate}

\subsection{Alpert Step 1: single scale smooth Alpert bilinear testing
characterization}

We begin with the arguments of Rios and Sawyer in \cite{RiSa}, \cite{RiSa3}
and \cite{RiSa4}. It is well known that the Fourier extension theorem in the
plane is equivalent to the local\ bilinear inequality (see e.g. \cite{Tao1},
\cite{TaVaVe} and \cite{BoGu}),%
\[
\left\Vert \mathcal{E}f_{1}\mathcal{E}f_{2}\right\Vert _{L^{\frac{q}{2}%
}\left(  B\left(  0,R\right)  \right)  }\leq C_{\varepsilon}R^{\varepsilon
}\left\Vert f_{1}\right\Vert _{L^{\infty}}\left\Vert f_{2}\right\Vert
_{L^{\infty}},\ \ \ \ \ q>\frac{2d}{d-1},
\]
where $\operatorname*{Supp}f_{1}$ and $\operatorname*{Supp}f_{2}$ are
appropriately separated by a fixed positive constant $\nu>0$. The
$\varepsilon$-removal theorem of \cite{Tao1} and Bourgain's Nikishan argument
exploiting symmetry are crucial to the success of the argument here, as there
is no longer any access to the special properties of Haar wavelets as in the
first argument above.

Expanding the functions $f_{1}$ and $f_{2}$ into their smooth Alpert
decompositions $f_{j}=\sum_{I\in\mathcal{D}}\bigtriangleup_{I}^{\eta}f_{j} $,
it was shown in \cite[see (2.8), and the proof in Section 3.]{RiSa3}%
\footnote{The trilinear analogue in dimension $d=3$ was established there, and
the same proof works for the bilinear version here in dimension $d=2$.} that
for $q>\frac{2d}{d-1}$,%
\[
\sup_{f\in L^{\infty}}\frac{\left\Vert \mathcal{E}f\right\Vert _{L^{\frac
{q}{2}}\left(  B\left(  0,2^{2s}\right)  \right)  }}{\left\Vert f\right\Vert
_{L^{\infty}}}\leq C_{\delta}2^{C\delta s}\sup_{f_{1},f_{2}\in L^{\infty}%
}\frac{\left\Vert \mathcal{E}f_{1}\mathcal{E}f_{2}\right\Vert _{L^{\frac{q}%
{2}}\left(  B\left(  0,2^{2s}\right)  \right)  }}{\left\Vert f_{1}\right\Vert
_{L^{\infty}}\left\Vert f_{2}\right\Vert _{L^{\infty}}}\leq C_{\delta
}2^{C\delta s}\sup_{f_{1},f_{2}\in L^{\infty}}\frac{\left\Vert \mathcal{E}%
\mathsf{Q}_{s}f_{1}\mathcal{E}\mathsf{Q}_{s}f_{2}\right\Vert _{L^{\frac{q}{2}%
}\left(  B\left(  0,2^{2s}\right)  \right)  }}{\left\Vert f_{1}\right\Vert
_{L^{\infty}}\left\Vert f_{2}\right\Vert _{L^{\infty}}},
\]
where $\mathsf{Q}_{s}$ is the projection used in \cite{RiSa3}, and\ combining
the proof of Theorem 10 in \cite{RiSa}, with the proof of Theorem 11 in
Subsection 3.2 of \cite{RiSa3}, shows that integration over the large ball
$B\left(  0,2^{2s}\right)  $ can be replaced by the much smaller ball
$B\left(  0,2^{\frac{s}{1-\delta}}\right)  $ for arbitarily small $\delta>0$
in the linear inequality, i.e.%
\begin{equation}
\sup_{f\in L^{\infty}}\frac{\left\Vert \mathcal{E}\mathsf{Q}_{s}f\right\Vert
_{L^{\frac{q}{2}}\left(  B\left(  0,2^{2s}\right)  \right)  }}{\left\Vert
f\right\Vert _{L^{\infty}}}\leq C_{\delta}2^{C\delta s}\sup_{f\in L^{\infty}%
}\frac{\left\Vert \mathcal{E}\mathsf{Q}_{s}f\right\Vert _{L^{\frac{q}{2}%
}\left(  B\left(  0,2^{\frac{s}{1-\delta}}\right)  \right)  }}{\left\Vert
f\right\Vert _{L^{\infty}}}+C_{\delta}2^{C\delta s},\ \ \ \ \ \text{for
}q>\frac{2d}{d-1},.\label{spec}%
\end{equation}
Note that\ given $\delta>0$, we must take $\kappa$ large in these arguments,
at least $\left[  \frac{20}{\delta}\right]  $.

Finally, these inequalities can also be shown to hold for the perturbed
pseudoprojections $\mathsf{Q}_{s}$ used in \cite{RiSa4}, and also used going
forward in this third argument. We will continue to use $I_{0}=\left[
0,1\right)  $, and also $U\equiv\left[  -1,1\right)  $.

\subsection{Alpert Step 2: smooth Alpert projection wave packet decomposition}

We assume notation as in the second argument above, but now in view of
(\ref{spec}), we will take $f_{L}\equiv\chi_{L}\mathsf{Q}_{s,L}^{\eta}%
f=\sum_{I\in\mathcal{G}_{s}\left[  U\right]  }\breve{f}\left(  I\right)
h_{I;\kappa}^{\eta}$ to be a perturbed smooth Alpert pseudoprojection as in
\cite{RiSa4} at scale $s$ of a bounded function $f\in L^{\infty}\left(
U\right)  $, and where $\breve{f}\left(  L\right)  =\left\langle
f,h_{L;\kappa}^{\eta}\right\rangle $ is the smooth Alpert coefficient of $f$
at the interval $I$. We will write the sum $\sum_{I\in\mathcal{G}_{s}\left[
U\right]  }\breve{f}\left(  I\right)  h_{I;\kappa}^{\eta}$ as the dot product
$\breve{f}_{s}\left(  L\right)  \cdot h_{s:I;\kappa}^{\eta}$ where $\breve
{f}_{s}\left(  L\right)  \equiv\left\{  \breve{f}\left(  I\right)  \right\}
_{I\in\mathcal{G}_{s}\left[  L\right]  }$ and $h_{s:I;\kappa}^{\eta}%
\equiv\left\{  h_{I;\kappa}^{\eta}\right\}  _{I\in\mathcal{G}_{s}\left[
L\right]  }$ are sequences indexed by $\mathcal{G}_{s}\left[  L\right]  $ of
constants and functions respectively. We note in passing that for $f\in
L^{\infty}\left(  U\right)  $, we have%
\[
\left\vert \mathsf{Q}_{s,L}^{\eta}f\right\vert \leq\sum_{I\in\mathcal{G}%
_{s}\left[  U\right]  }\left\vert \left\langle f,h_{I;\kappa}^{\eta
}\right\rangle h_{I;\kappa}^{\eta}\right\vert \leq\sum_{I\in\mathcal{G}%
_{s}\left[  U\right]  }\left\Vert f\right\Vert _{L^{\infty}}\left\Vert
h_{I;\kappa}^{\eta}\right\Vert _{L^{1}}\left\Vert h_{I;\kappa}^{\eta
}\right\Vert _{L^{\infty}}\mathbf{1}_{2I}\lesssim\left\Vert f\right\Vert
_{L^{\infty}},\ \ \ \ \ \text{for }L\in\mathcal{G}_{\frac{1}{2}s}\left[
U\right]  .
\]

Recall from (\ref{wp}) the wave packet decomposition%
\begin{equation}
\mathcal{E}f\left(  \xi\right)  =\sum_{\substack{K\in\mathcal{G}_{-\frac{1}%
{2}s}\left(  \left[  -2^{s},2^{s}\right]  \right)  \\L\in\mathcal{G}_{\frac
{1}{2}s}\left[  U\right]  }}e^{2\pi i\lambda_{K}\lambda_{L}}\widetilde{f_{L}%
}\left(  K\right)  \mathcal{E}\varphi_{K,L}\left(  \xi\right)  \sim
R^{-\frac{1}{4}}\sum_{\alpha}\sum_{\left(  K,L\right)  \sim\alpha}e^{-2\pi
i\xi\cdot\Phi\left(  \lambda_{L}\right)  }\widehat{f_{L}}\left(  K\right)
\mathbf{1}_{T_{K,L}}\left(  \xi\right)  ,\label{wp'}%
\end{equation}
where the sum ranges over a collection of pairwise disjoint squares $\alpha$
of side length $R^{\frac{1}{2}}$, and where for a fixed square $\alpha$, we
write $\left(  K,L\right)  \sim\alpha$ to mean $T_{K,L}\cap\alpha\neq
\emptyset$.

It remains to estimate the local norm of this Fourier extension $\mathcal{E}f
$ with $f_{L}=\chi_{L}\mathsf{Q}_{s,L}^{\eta}f$, using the discrete bilinear
characterization of Fourier extension, as developed in \textbf{Step 3} of the
second argument above. But for this we will first need a further decomposition
as in \cite[see the end of Subsection 1.4.]{Saw8} of the coefficient sequence
$\left\{  \breve{f}\left(  I\right)  \right\}  _{L\in\mathcal{G}_{s}\left[
U\right]  }$ of the smooth Alpert projection $f=\sum_{L\in\mathcal{G}%
_{s}\left[  U\right]  }f_{L}=\sum_{I\in\mathcal{G}_{s}\left[  U\right]
}\breve{f}\left(  I\right)  h_{I;\kappa}^{\eta}$ into a sum of modulated
sequences of the form $\left\{  ce^{-2\pi i\lambda n}\right\}  _{n\in
\mathbb{Z}_{2^{s}}}$ for $\lambda\in\mathbb{Z}_{2^{s}}\equiv\left\{  \ell
\in\mathbb{Z}:-2^{s}\leq\ell\leq2^{s}\right\}  $.

\subsection{Alpert Step 3: discrete Fourier transform of Alpert coefficient
sequences}

We first decompose the sequence of coefficients in smooth Alpert projections
via the discrete Fourier transform. The individual projections $\bigtriangleup
_{I}^{\eta}f$ have supports with bounded overlap, but their Fourier extensions
do not. The main idea for rectifying this is taken from \cite{Saw8}, where it
is suggested to view the sequence of Alpert coefficients as an element in
$\ell^{2}\left(  \mathbb{Z}_{2^{s}}\right)  $ with orthonormal basis $\left\{
\mathbf{1}_{\left\{  n\right\}  }\right\}  _{n=-2^{s}}^{2^{s}}$, i.e.%
\[
\mathbf{a}_{f}\equiv\left\{  \left\langle f,h_{I;\kappa}^{\eta}\right\rangle
\right\}  _{I\in\mathcal{G}_{s}\left[  I_{0}\right]  }=\sum_{n=-2^{s}}^{2^{s}%
}\left\langle f,h_{I_{n};\kappa}^{\eta}\right\rangle \mathbf{1}_{\left\{
n\right\}  }\ ,
\]
and then to write $\mathbf{a}_{f}$ in a new orthonormal basis $\left\{
\varphi^{m}\right\}  _{m=1}^{2^{s}}$ where $\varphi^{m}=\left\{  \frac{e^{2\pi
i\frac{n\cdot m}{2^{s+1}+1}}}{\left(  2^{s+1}+1\right)  ^{\frac{1}{2}}%
}\right\}  _{n=-2^{s}}^{2^{s}}$, namely as%
\begin{equation}
\mathbf{a}_{f}=\sum_{m=-2^{s}}^{2^{s}}\left\langle \mathbf{a}_{f},\varphi
^{m}\right\rangle _{\ell^{2}\left(  \mathbb{Z}_{2^{s}}\right)  }\varphi
^{m}\text{,}\label{seq decomp}%
\end{equation}
where the discrete Fourier transform of the basis $\left\{  \varphi
^{m}\right\}  _{m=-2^{s}}^{2^{s}}$ is the standard basis $\left\{
\mathbf{1}_{\left\{  m\right\}  }\right\}  _{m=-2^{s}}^{2^{s}}$. Thus
$\mathbf{a}_{f}$ has been written as a linear combination of sequences
$\varphi^{m}$ whose discrete Fourier transforms $\mathcal{F}%
_{\operatorname*{disc}}\varphi^{m}=\mathbf{1}_{\left\{  m\right\}  }$ have
pairwise disjoint supports. See \cite{Saw8} and \textbf{Step 2} of
\cite{RiSa4} for more detail.

Thus we can apply this sequence decomposition to the pseudoprojection
$\widetilde{\mathsf{Q}}_{s,U}^{\eta,\mathcal{G}}$ to obtain,
\begin{align*}
\widetilde{\mathsf{Q}}_{s,U}^{\eta,\mathcal{G}}f  & =\sum_{I\in\mathcal{G}%
_{s}\left[  U\right]  }\left\langle f,h_{I_{n};\kappa}^{\eta}\right\rangle
_{\ell^{2}\left(  \mathbb{Z}_{2^{s}}\right)  }h_{I_{n};\kappa}^{\eta}=\left\{
\left\langle f,h_{I_{n};\kappa}^{\eta}\right\rangle \right\}  _{I_{n}%
\in\mathcal{G}_{s}\left[  U\right]  }\cdot\left\{  h_{I_{n};\kappa}^{\eta
}\right\}  _{n\in\Gamma_{s}}\\
& =\sum_{m=-2^{s}}^{2^{s}}\left\langle \mathbf{a}_{f},\varphi^{m}\right\rangle
_{\ell^{2}\left(  \mathbb{Z}_{2^{s}}\right)  }\varphi^{m}\cdot\left\{
h_{I_{n};\kappa}^{\eta}\right\}  _{n\in\Gamma_{s}}=\sum_{m=-2^{s}}^{2^{s}%
}\left\langle \mathbf{a}_{f},\varphi^{m}\right\rangle _{\ell^{2}\left(
\mathbb{Z}_{2^{s}}\right)  }\sum_{n\in\mathbb{Z}_{2^{s}}:I_{n}\in
\mathcal{G}_{s}\left[  U\right]  }\varphi_{n}^{m}h_{I_{n};\kappa}^{\eta}\ ,
\end{align*}
and this formula can be plugged into the formulas in \textbf{Step 2}. Note
also the $L^{2}$ estimate%
\begin{equation}
\left(  \sum_{m=-2^{s}}^{2^{s}}\left\vert \left\langle \mathbf{a}_{f}%
,\varphi^{m}\right\rangle _{\ell^{2}\left(  \mathbb{Z}_{2^{s}}\right)
}\right\vert ^{2}\right)  ^{\frac{1}{2}}=\left\vert \mathbf{a}_{f}\right\vert
_{\ell^{2}}\approx\left\Vert \mathsf{Q}_{s,U}^{\eta,\mathcal{G}}f\right\Vert
_{L^{2}}\lesssim\left\Vert f\right\Vert _{L^{2}\left(  U\right)
}\ .\label{L2 est}%
\end{equation}

Now we adapt the estimates in \textbf{Step 4} of the second argument above to
hold for smooth Alpert projections as in \textbf{Alpert Step 2}, and the
decomposition of their coefficient sequences as just given in \textbf{Alpert
Step 3}.

\subsection{Alpert Step 4: bilinear modulated wave packet estimates}

Using (\ref{wp}) we compute the power norm,%
\begin{align}
& \ \ \ \ \ \ \ \ \ \ \ \ \ \ \ \left\Vert \prod_{\mu=1}^{2}\mathcal{E}f_{\mu
}\right\Vert _{L^{\frac{q}{2}}\left(  B\left(  0,R\right)  \right)  }%
^{\frac{q}{2}}=R^{-\frac{q}{4}}\sum_{\alpha}\int_{\alpha}\prod_{\mu=1}%
^{2}\left\vert \sum_{\left(  K_{\mu},L_{\mu}\right)  \sim\alpha}e^{-2\pi
i\xi\cdot\Phi\left(  \lambda_{L_{\mu}}\right)  }\widehat{f_{L_{\mu}}}\left(
\lambda_{K_{\mu}}\right)  \right\vert ^{\frac{q}{2}}d\xi\label{power norm}\\
& =R^{-\frac{q}{4}}\sum_{\alpha}\int_{\alpha}\prod_{\mu=1}^{2}\left\vert
\sum_{L_{\mu}\in\mathcal{G}_{\frac{1}{2}s}\left[  U_{\mu}\right]  }%
\widehat{f_{L_{\mu}}}\left(  K_{L_{\mu},\alpha}\right)  e^{-2\pi i\xi\cdot
\Phi\left(  \lambda_{L_{\mu}}\right)  }\right\vert ^{\frac{q}{2}}%
d\xi=R^{-\frac{q}{4}}\sum_{\alpha}\int_{\alpha}\prod_{\mu=1}^{2}\left\vert
\mathcal{D}_{S}^{\Lambda_{\mu}}a_{\mu}^{\alpha}\left(  \xi\right)  \right\vert
^{\frac{q}{2}}d\xi\nonumber
\end{align}
where $K_{L,\alpha}$ is defined by $\left(  K_{L,\alpha},L\right)  \sim\alpha
$, i.e. $T_{K_{L,\alpha},L}\cap\alpha\neq\emptyset$, and where%
\begin{align*}
\Lambda_{\mu}  & \equiv\left\{  \Phi\left(  \lambda_{L_{\mu}}\right)
\right\}  _{L_{\mu}\in\mathcal{G}_{\frac{1}{2}s}\left[  U_{\mu}\right]  }\ ,\\
\mathcal{D}_{S}^{\Lambda_{\mu}}a_{\mu}^{\alpha}\left(  \xi\right)    &
\equiv\sum_{\lambda_{\mu}\in\Lambda_{\mu}\subset S}a_{\lambda_{\mu}}^{\alpha
}e^{i\xi\cdot\lambda_{\mu}}=\sum_{L_{\mu}\in\mathcal{G}_{\frac{1}{2}s}\left[
U_{\mu}\right]  }\widehat{f_{L_{\mu}}}\left(  k_{\lambda_{\mu},\alpha}\right)
e^{i\xi\cdot\Phi\left(  \lambda_{L_{\mu}}\right)  },\\
a_{\mu}^{\alpha}  & \equiv\left\{  \widehat{f_{L_{\mu}}}\left(  K_{L_{\mu
},\alpha}\right)  \right\}  _{L_{\mu}\in\mathcal{G}_{\frac{1}{2}s}\left[
U_{\mu}\right]  }=\sum_{m=-2^{s}}^{2^{s}}\left\langle a_{\mu}^{\alpha}%
,\varphi^{m}\right\rangle \varphi^{m}=\sum_{m=-2^{s}}^{2^{s}}\sum
_{\lambda_{\mu}=-2^{s}}^{2^{s}}\widehat{f_{\lambda_{\mu}}}\left(
K_{\lambda_{\mu},\alpha}\right)  a_{\mu}^{\alpha},\varphi_{\lambda_{\mu}}%
^{m}\varphi^{m},
\end{align*}
and where we are indexing the interval $L=\left[  0,2^{-s}\right)  +\lambda$
by the parameter $\lambda$ in the range $-2^{s}\leq\lambda\leq2^{s}$.

We will now exploit the fact that $f=\mathsf{Q}_{s,U}^{\eta}f$ is a smooth
Alpert projection, and use that%
\[
f_{L}\left(  x\right)  =\sum_{I\in\mathcal{G}_{s}\left[  L\right]  }\breve
{f}\left(  I\right)  h_{I;\kappa}^{\eta}\left(  x\right)  =\sum_{I\in
\mathcal{G}_{s}\left[  L\right]  }\breve{f}\left(  I\right)  \tau_{b_{I}%
}h_{I_{s};\kappa}^{\eta}\left(  x\right)  ,
\]
implies that%
\begin{equation}
\widehat{f_{L}}\left(  K_{L,\alpha}\right)  =\sum_{I\in\mathcal{G}_{s}\left[
L\right]  }\breve{f}\left(  I\right)  \widehat{\tau_{\lambda_{I}}%
h_{I_{s};\kappa}^{\eta}}\left(  \lambda_{K_{L,\alpha}}\right)  =\left(
\sum_{I\in\mathcal{G}_{s}\left[  L\right]  }\breve{f}\left(  I\right)
e^{-i\lambda_{K_{L,\alpha}}\lambda_{I}}\right)  \widehat{h_{I_{s};\kappa
}^{\eta}}\left(  \lambda_{K_{L,\alpha}}\right)  .\label{mod decomp}%
\end{equation}
Thus using the sequence decomposition (\ref{seq decomp}) with $\ell
=\lambda_{L}$, the coefficients $\widehat{f_{L}}\left(  K_{L,\alpha}\right)  $
are given by
\[
\widehat{f_{L}}\left(  K_{L,\alpha}\right)  =a^{\alpha}\left(  \ell\right)
=\sum_{\ell=-2^{s}}^{2^{s}}\widehat{f_{\ell}}\left(  k_{\ell,\alpha}\right)
\varphi_{\ell}^{m}\ ,
\]
where $\widehat{f_{\ell}}\left(  k_{\ell,\alpha}\right)  $ is another notation
for $\widehat{f_{L}}\left(  K_{L,\alpha}\right)  $.

Now we define the inner product $\left\langle \breve{f},h_{\kappa}^{\eta
}\right\rangle _{\ast}$ of the sequence of scalars $\breve{f}=\left\{
\breve{f}\left(  I\right)  \right\}  _{I\in\mathcal{G}_{s}\left[  U\right]  }$
and the sequence of functions $h_{\kappa}^{\eta}=\left\{  h_{I;\kappa}^{\eta
}\right\}  _{I\in\mathcal{G}_{s}\left[  U\right]  }$ by%
\[
\left\langle \breve{f},h_{\kappa}^{\eta}\right\rangle _{\ast}=\sum
_{I\in\mathcal{G}_{s}\left[  U\right]  }\breve{f}\left(  I\right)
h_{I;\kappa}^{\eta}\ ,
\]
so that
\begin{align*}
f  & =\mathsf{Q}_{s,U}^{\eta}f=\sum_{I\in\mathcal{G}_{s}\left[  U\right]
}\breve{f}\left(  I\right)  h_{I;\kappa}^{\eta}=\left\langle \breve
{f},h_{\kappa}^{\eta}\right\rangle _{\ast}=\left\langle \sum_{m\in
\mathbb{Z}_{2^{s}}}\left\langle \breve{f},\varphi^{m}\right\rangle \varphi
^{m},h_{\kappa}^{\eta}\right\rangle _{\ast}\\
& =\sum_{m\in\mathbb{Z}_{2^{s}}}\left\langle \breve{f},\varphi^{m}%
\right\rangle \left\langle \varphi^{m},h_{\kappa}^{\eta}\right\rangle _{\ast
}=\sum_{m\in\mathbb{Z}_{2^{s}}}\left\langle \breve{f},\varphi^{m}\right\rangle
\sum_{I\in\mathcal{G}_{s}\left[  U\right]  }\varphi^{m}\left(  I\right)
h_{I;\kappa}^{\eta}\\
& =\sum_{m\in\mathbb{Z}_{2^{s}}}\left\langle \breve{f},\varphi^{m}%
\right\rangle \sum_{I\in\mathcal{G}_{s}\left[  U\right]  }e^{im\lambda_{I}%
}h_{I;\kappa}^{\eta}=\sum_{I\in\mathcal{G}_{s}\left[  U\right]  }\left\{
\sum_{m\in\mathbb{Z}_{2^{s}}}\left\langle \breve{f},\varphi^{m}\right\rangle
e^{im\lambda_{I}}\right\}  h_{I;\kappa}^{\eta}\ ,
\end{align*}
which upon comparing coefficients shows that
\[
\breve{f}\left(  I\right)  =\sum_{m\in\mathbb{Z}_{2^{s}}}\left\langle
\breve{f},\varphi^{m}\right\rangle e^{im\lambda_{I}}.
\]

Thus we obtain%
\[
f=\sum_{L\in\mathcal{G}_{\frac{1}{2}s}\left[  U\right]  }f_{L}=\sum
_{L\in\mathcal{G}_{\frac{1}{2}s}\left[  U\right]  }\sum_{I\in\mathcal{G}%
_{s}\left[  L\right]  }\left\{  \sum_{m\in\mathbb{Z}_{2^{s}}}\left\langle
\breve{f},\varphi^{m}\right\rangle e^{im\lambda_{I}}\right\}  h_{I;\kappa
}^{\eta}\ ,
\]
so that in the Fourier series decomposition, the sum over $I$ in
(\ref{mod decomp}) is given by%
\begin{align}
\sum_{I\in\mathcal{G}_{s}\left[  L\right]  }\breve{f}\left(  I\right)
\chi_{L}\left(  \lambda_{I}\right)  e^{-i\lambda_{K_{L,\alpha}}\lambda_{I}}  &
=\sum_{I\in\mathcal{G}_{s}\left[  L\right]  }\sum_{m\in\mathbb{Z}_{2^{s}}%
}\left\langle \breve{f},\varphi^{m}\right\rangle \chi_{L}\left(  \lambda
_{I}\right)  e^{im\lambda_{I}}e^{-i\lambda_{K_{L,\alpha}}\lambda_{I}%
}\label{sum over I}\\
& =\sum_{m\in\mathbb{Z}_{2^{s}}}\left\langle \breve{f},\varphi^{m}%
\right\rangle \sum_{I\in\mathcal{G}_{s}\left[  L\right]  }\chi_{L}\left(
\lambda_{I}\right)  e^{i\left(  m-\lambda_{K_{L,\alpha}}\right)  \lambda_{I}%
}\ .\nonumber
\end{align}

Now for $\left\vert m-\lambda_{K_{L,\alpha}}\right\vert $ large, we will show
momentarily that the sum over $I\in\mathcal{G}_{s}\left[  L\right]  $ above is
small, and thus the essential terms arise when $m$ is sufficiently near
$\lambda_{K_{L,\alpha}}$, namely when $\left\vert m-\lambda_{K_{L,\alpha}%
}\right\vert \lesssim\frac{1}{\ell\left(  L\right)  }=R^{\frac{1}{2}}$.
Indeed, since $\chi_{L}$ is a cutoff function smoothly adapted to the interval
$L$ of length $R^{-\frac{1}{2}}$, and the step size in $\mathcal{G}_{s}\left[
L\right]  $ is the much smaller $R^{-1}$, we see that
\[
\sum_{m\in\mathbb{Z}_{2^{s}}}\left\langle \breve{f},\varphi_{m}\right\rangle
\sum_{I\in\mathcal{G}_{s}\left[  L\right]  }\chi_{L}\left(  \lambda
_{I}\right)  e^{i\left(  m-\lambda_{K_{L,\alpha}}\right)  \cdot\lambda_{I}%
}\sim\sum_{m\in\mathbb{Z}_{2^{s}}}\left\langle \breve{f},\varphi
^{m}\right\rangle \boldsymbol{AD}_{R^{\frac{1}{2}}}\left(  m-\lambda
_{K_{L,\alpha}}\right)  ,
\]
where upon summing by parts, we see that $\boldsymbol{AD}_{R^{\frac{1}{2}}}$
is an average of Dirichlet kernels $\boldsymbol{D}_{N}$ with $N\approx
R^{\frac{1}{2}}$, plus an acceptable error term. Details of this technical but
standard argument are left for the reader. It is now clear that we may indeed
discard the values of $m$ for which $\left\vert m-\lambda_{K_{L,\alpha}%
}\right\vert $ is much larger than $\frac{1}{\ell\left(  L\right)  }%
=R^{\frac{1}{2}}$.

Thus it suffices to estimate the linear exponential sum%
\begin{align*}
\sum_{I\in\mathcal{G}_{s}\left[  L\right]  }e^{i\left(  \lambda_{K_{L,\alpha}%
}-m\right)  b_{I}}  & =\sum_{j=-2^{\frac{1}{2}s}}^{2^{\frac{1}{2}s}%
}e^{i\left(  k2^{\frac{1}{2}s}-m\right)  j2^{-s}}=\sum_{j=-2^{\frac{1}{2}s}%
}^{2^{\frac{1}{2}s}}e^{i\left(  \frac{k}{2^{\frac{1}{2}s}}-\frac{m}{2^{s}%
}\right)  j},\\
\text{where }-2^{\frac{1}{2}s}  & \leq k\leq2^{\frac{1}{2}s}\text{ and }%
-2^{s}\leq m\leq2^{s}.
\end{align*}
Recall that the Dirichlet kernel $\boldsymbol{D}_{N}$ is given by%
\[
\boldsymbol{D}_{N}\left(  t\right)  =\sum_{j=-N}^{N}e^{-itj}=\frac{\sin
2\pi\left(  N+\frac{1}{2}\right)  t}{\sin\pi t},\ \ \ \ \ \text{for }-1\leq
t\leq1,
\]
and so we have%
\begin{align*}
& \sum_{I\in\mathcal{G}_{s}\left[  L\right]  }e^{i\left(  \lambda
_{K_{L,\alpha}}-m\right)  b_{I}}=e^{i\left(  \lambda_{K_{L,\alpha}}-m\right)
b_{L}}\sum_{I\in\mathcal{G}_{s}\left[  L\right]  }e^{i\left(  \lambda
_{K_{L,\alpha}}-m\right)  \left(  b_{I}-b_{L}\right)  }=e^{i\left(
k2^{\frac{1}{2}s}-\lambda\right)  \ell2^{-\frac{1}{2}s}}\boldsymbol{D}%
_{2^{\frac{1}{2}s}}\left(  \frac{k}{2^{\frac{1}{2}s}}-\frac{m}{2^{s}}\right)
\\
& =e^{i\left(  k-\lambda2^{-\frac{1}{2}s}\right)  \ell}\boldsymbol{D}%
_{\sqrt{R}}\left(  \frac{k}{\sqrt{R}}-\frac{m}{R}\right)  =e^{i\left(
k-\frac{\lambda}{\sqrt{R}}\right)  \ell}\frac{\sin2\pi\left(  \sqrt{R}%
+\frac{1}{2}\right)  \left(  \frac{k}{\sqrt{R}}-\frac{m}{R}\right)  }{\sin
\pi\left(  \frac{k}{\sqrt{R}}-\frac{m}{R}\right)  }.
\end{align*}

Thus we see that the main contribution comes when $\left\vert t\right\vert
\leq\frac{1}{N}$, i.e. when%
\[
\left\vert \frac{k}{\sqrt{R}}-\frac{m}{R}\right\vert \leq\frac{1}{\sqrt{R}%
}\Longleftrightarrow\left\vert k\sqrt{R}-m\right\vert \leq\sqrt{R}%
,\Longleftrightarrow\left(  k-1\right)  \sqrt{R}\leq m\leq\left(  k+1\right)
\sqrt{R},
\]
i.e. $\frac{m}{\sqrt{R}}=k+O\left(  1\right)  $, and where we recall that%
\[
-\sqrt{R}\leq k\leq\sqrt{R}\text{ and }-R\leq m\leq R.
\]
Note that for $\frac{m}{\sqrt{R}}=k+O\left(  1\right)  $, the exponential
$e^{i\left(  k-\frac{m}{\sqrt{R}}\right)  \cdot\ell}$ equals $e^{i\left(
k\sqrt{R}-m\right)  \cdot\frac{\ell}{\sqrt{R}}}=e^{ik\cdot\ell}e^{-i\frac
{m}{\sqrt{R}}\cdot\ell}$ where $\frac{m}{\sqrt{R}}\cdot\ell$ varies from
$\left(  k-1\right)  \cdot\ell$ to $\left(  k+1\right)  \cdot\ell$, over a
span of $2\ell$.

Recalling that $-\sqrt{R}\leq\ell\leq\sqrt{R}$, we see that for $\ell
\approx\sqrt{R}$, the variation of $\frac{m}{\sqrt{R}}\cdot\ell$ is about
$\sqrt{R}$, and so the exponential $e^{i\left(  k-\frac{m}{\sqrt{R}}\right)
\cdot\ell}$ oscillates considerably as $\left(  k-1\right)  \sqrt{R}\leq
m\leq\left(  k+1\right)  \sqrt{R}$. Thus we obtain additional decay from this
oscillation, with the result that we may equivalently assume that the main
contribution arises from just the case $m=\lambda_{K_{L,\alpha}}=k\sqrt{R}$.
For $R^{\varepsilon}\leq\left\vert \ell\right\vert \leq\sqrt{R}$, the
variation of $\frac{m}{\sqrt{R}}\ell$ is proportionally smaller, but still
enough that we may assume the main contribution arises from the case
$m=k\sqrt{R}$. The case $\left\vert \ell\right\vert \leq R^{\varepsilon}$
leads to a small growth bound which we can ignore.

This assumption then `reduces' the sum over $I$ on the right hand side of
(\ref{sum over I}) to%
\begin{equation}
\left\langle \breve{f},\varphi^{\lambda_{K_{L,\alpha}}}\right\rangle \left(
\#\mathcal{G}_{s}\left[  L\right]  \right)  =\left\langle \breve{f}%
,\varphi^{\lambda_{K_{L,\alpha}}}\right\rangle 2^{\frac{d-1}{2}s}%
,\label{m elim}%
\end{equation}
in which the sum over $m$ has been eliminated. Again, these technical proofs
are all left to the reader.

In order to make use of (\ref{mod decomp}), (\ref{sum over I}) and
(\ref{m elim}), we must now rewrite the power norm of $\prod_{\mu=1}%
^{2}\mathcal{D}_{S}^{\Lambda_{\mu}}a_{\mu}^{\alpha}$ over the square $\alpha
$\ at the end of (\ref{power norm}), as a power norm of an associated Fourier
extension over $\alpha$, just as was done in \textbf{Step 4} of the second
argument. For the sake of completeness, we repeat the details with appropriate
modifications here. For this we combine the two inequalities (\ref{a to F})
and (\ref{F^ to Ef}), i.e.%
\begin{align*}
& \left\Vert \prod_{\mu=1}^{2}\sum_{\lambda_{\mu}\in\Lambda_{\mu}\subset
S}a_{\lambda_{\mu}}e^{i\xi\cdot\lambda_{\mu}}\right\Vert _{L^{\frac{q}{2}%
}\left(  B_{\mathbb{R}^{2}}\left(  \mathbf{c},R\right)  \right)  }^{\frac
{1}{2}}\leq\left\Vert \widehat{\varphi_{R^{-1}}}\left(  \xi\right)  ^{2}%
\prod_{\mu=1}^{2}\sum_{\lambda_{\mu}\in\Lambda_{\mu}\subset S}a_{\lambda_{\mu
}}e^{i\left(  \xi-\mathbf{c}_{\alpha}\right)  \cdot\lambda_{\mu}}\right\Vert
_{L^{\frac{q}{2}}\left(  B_{\mathbb{R}^{2}}\left(  0,R\right)  \right)
}^{\frac{1}{2}}\\
& =\left\Vert \prod_{\mu=1}^{2}\sum_{\lambda_{\mu}\in\Lambda_{\mu}\subset
S}b_{\lambda_{\mu}}^{\alpha}\widehat{\varphi_{R^{-1}}\left(  \cdot
+\lambda_{\mu}\right)  }\right\Vert _{L^{\frac{q}{2}}\left(  B_{\mathbb{R}%
^{2}}\left(  0,R\right)  \right)  }^{\frac{1}{2}}=\left\Vert \prod_{\mu=1}%
^{2}\widehat{F_{\mu}^{\alpha}}\right\Vert _{L^{q}\left(  B_{\mathbb{R}^{2}%
}\left(  0,R\right)  \right)  }^{\frac{1}{2}},
\end{align*}
where since $\lambda_{\mu}=\Phi\left(  \lambda_{L_{\mu}}\right)  $, we have
\[
b_{\lambda_{\mu}}^{\alpha}=e^{-i\mathbf{c}_{\alpha}\cdot\Phi\left(
\lambda_{\mu}\right)  }a_{\lambda_{\mu}}\text{ and }F_{\mu}^{\alpha}\left(
z\right)  =\sum_{\lambda_{\mu}\in\Lambda_{\mu}\subset S}b_{\lambda_{\mu}%
}^{\alpha}\varphi_{R^{-1}}\left(  z+\left(  0,\lambda_{\mu}\right)  \right)  ,
\]
and%
\begin{align*}
& \left\Vert \widehat{F}\right\Vert _{L^{q}\left(  B\left(  0,R\right)
\right)  }^{q}=\int\left\vert \widehat{F}\left(  \xi\right)  \right\vert
^{q}d\xi=\int\left\vert \int_{\mathcal{N}_{R^{-1}}\left(  S\right)  }e^{-2\pi
i\xi\cdot z}F\left(  z\right)  dz\right\vert ^{q}d\xi\\
& =\int\left\vert \int_{-R^{-1}}^{R^{-1}}\left\{  \int e^{-2\pi i\xi
\cdot\left(  \Phi\left(  x\right)  +t\right)  }F\left(  \Phi\left(  x\right)
+\left(  0,t\right)  \right)  dx\right\}  dt\right\vert ^{q}d\xi
=\int\left\vert \int_{-R^{-1}}^{R^{-1}}\left\{  \int e^{-2\pi i\xi\cdot
\Phi\left(  x\right)  }F\left(  \Phi\left(  x\right)  +\left(  0,t\right)
\right)  dx\right\}  dt\right\vert ^{q}d\xi\\
& =\int\left\vert \int e^{-2\pi i\xi\cdot\Phi\left(  x\right)  }\left\{
\int_{-R^{-1}}^{R^{-1}}F\left(  \Phi\left(  x\right)  +\left(  0,t\right)
\right)  dt\right\}  dx\right\vert ^{q}d\xi=\int\left\vert \int e^{-2\pi
i\xi\cdot\Phi\left(  x\right)  }f\left(  x\right)  dx\right\vert ^{q}d\xi
=\int\left\vert \mathcal{E}_{S}f\left(  \xi\right)  \right\vert ^{q}d\xi,
\end{align*}
where
\[
f\left(  x\right)  \equiv\int_{-R^{-1}}^{R^{-1}}F\left(  \Phi\left(  x\right)
+\left(  0,t\right)  \right)  dt.
\]
The bilinear analogue of this last equality is easily seen to be,%
\[
\left\Vert \prod_{\mu=1}^{2}\widehat{F_{\mu}}\right\Vert _{L^{\frac{q}{2}%
}\left(  B\left(  0,R\right)  \right)  }^{\frac{q}{2}}=\left\Vert \prod
_{\mu=1}^{2}\mathcal{E}_{S}f_{\mu}\right\Vert _{L^{\frac{q}{2}}\left(
B\left(  0,R\right)  \right)  }^{\frac{q}{2}},\ \ \ \ \ \text{where }f_{\mu
}\left(  x\right)  \equiv\int_{-R^{-1}}^{R^{-1}}F_{\mu}\left(  \Phi\left(
x\right)  +\left(  0,t_{\mu}\right)  \right)  dt_{\mu}.
\]

Altogether then,%
\begin{align*}
& \left\Vert \prod_{\mu=1}^{2}\mathcal{E}f_{\mu}\right\Vert _{L^{\frac{q}{2}%
}\left(  Q_{+}\left(  0,R\right)  \right)  }^{\frac{q}{2}}=C_{\varepsilon
}R^{\varepsilon}R^{-\frac{q}{4}}\sum_{\alpha}\left\Vert \prod_{\mu=1}%
^{2}\mathcal{D}_{S}^{\Lambda_{\mu}}a_{\mu}^{\alpha}\right\Vert _{L^{\frac
{q}{2}}\left(  \alpha\right)  }^{\frac{q}{2}}=C_{\varepsilon}R^{\varepsilon
}R^{-\frac{q}{4}}\sum_{\alpha}\left\Vert \prod_{\mu=1}^{2}\mathcal{D}%
_{S}^{\Lambda_{\mu}}a_{\mu}^{\alpha}\right\Vert _{L^{\frac{q}{2}}\left(
\alpha\right)  }^{\frac{q}{2}}\\
& =C_{\varepsilon}R^{\varepsilon}R^{-\frac{q}{4}}\sum_{\alpha}\left\Vert
\prod_{\mu=1}^{2}\widehat{F_{\mu}^{\alpha}}\right\Vert _{L^{\frac{q}{2}%
}\left(  \alpha\right)  }^{\frac{q}{2}}=C_{\varepsilon}R^{\varepsilon
}R^{-\frac{q}{4}}\sum_{\alpha}\left\Vert \prod_{\mu=1}^{2}\mathcal{E}%
_{S}f_{\mu}^{\alpha}\right\Vert _{L^{\frac{q}{2}}\left(  \alpha\right)
}^{\frac{q}{2}}=C_{\varepsilon}R^{\varepsilon}\sum_{\alpha}\left\Vert
\prod_{\mu=1}^{2}\mathcal{E}_{S}g_{\mu}^{\alpha}\right\Vert _{L^{\frac{q}{2}%
}\left(  \alpha\right)  }^{\frac{q}{2}},
\end{align*}
where $g_{\mu}^{\alpha}=R^{-\frac{1}{4}}f_{\mu}^{\alpha}$. Applying the
definition of the characteristic $\operatorname*{BiFour}\nolimits_{L^{q}%
\rightarrow L^{q}}^{S}\left(  R^{\frac{1}{2}}\right)  $, we thus obtain%
\begin{equation}
\left\Vert \prod_{\mu=1}^{2}\mathcal{E}f_{\mu}\right\Vert _{L^{\frac{q}{2}%
}\left(  Q_{+}\left(  0,R\right)  \right)  }^{\frac{q}{2}}\leq C_{\varepsilon
}R^{\varepsilon}\operatorname*{BiFour}\nolimits_{\otimes_{2}L^{q}\rightarrow
L^{\frac{q}{2}}}^{S}\left(  R^{\frac{1}{2}}\right)  \sum_{\alpha}\prod_{\mu
=1}^{2}\left\Vert g_{\mu}^{\alpha}\right\Vert _{L^{q}\left(  U\right)
}^{\frac{q}{2}}\ ,\label{BiFour app}%
\end{equation}
where%
\begin{align*}
g_{\mu}^{\alpha}\left(  x\right)   & =R^{-\frac{1}{4}}\int_{-R^{-\frac{1}{2}}%
}^{R^{-\frac{1}{2}}}F_{\mu}^{\alpha}\left(  \Phi\left(  x\right)  +\left(
0,t_{\mu}\right)  \right)  dt_{\mu}=R^{-\frac{1}{4}}\int_{-R^{-\frac{1}{2}}%
}^{R^{-\frac{1}{2}}}\sum_{\lambda_{\mu}\in\Lambda_{\mu}\subset S}%
b_{\lambda_{\mu}}^{\alpha}\varphi_{R^{-\frac{1}{2}}}\left(  \Phi\left(
x\right)  +\left(  0,t_{\mu}\right)  +\Phi\left(  \lambda_{L_{\mu}}\right)
\right)  dt_{\mu}\\
& =R^{-\frac{1}{4}}\int_{-R^{-\frac{1}{2}}}^{R^{-\frac{1}{2}}}\sum_{L_{\mu}%
\in\mathcal{G}_{\frac{1}{2}s}\left[  U_{\mu}\right]  }e^{-i\mathbf{c}_{\alpha
}\cdot\Phi\left(  \lambda_{L_{\mu}}\right)  }\widehat{f_{L_{\mu}}}\left(
K_{L_{\mu},\alpha}\right)  \varphi_{R^{-\frac{1}{2}}}\left(  \Phi\left(
x\right)  +\left(  0,t_{\mu}\right)  +\Phi\left(  \lambda_{L_{\mu}}\right)
\right)  dt_{\mu}\\
& =\sum_{L_{\mu}\in\mathcal{G}_{\frac{1}{2}s}\left[  U_{\mu}\right]
}e^{-i\mathbf{c}_{\alpha}\cdot\Phi\left(  \lambda_{L_{\mu}}\right)  }%
\widehat{f_{L_{\mu}}}\left(  K_{L_{\mu},\alpha}\right)  \left\{  R^{-\frac
{1}{4}}\int_{-R^{-\frac{1}{2}}}^{R^{-\frac{1}{2}}}\varphi_{R^{-\frac{1}{2}}%
}\left(  \Phi\left(  x\right)  +\left(  0,t_{\mu}\right)  +\Phi\left(
\lambda_{L_{\mu}}\right)  \right)  dt_{\mu}\right\}  \ .
\end{align*}

The approximate identity $\varphi_{R^{-\frac{1}{2}}}$\ is two-dimensional, so
that $\left\Vert \varphi_{R^{-\frac{1}{2}}}\right\Vert _{L^{\infty}}=R$, and
then the function%
\[
\psi_{L_{\mu}}\left(  x\right)  \equiv R^{-\frac{1}{4}}\int_{-R^{-\frac{1}{2}%
}}^{R^{-\frac{1}{2}}}\varphi_{R^{-\frac{1}{2}}}\left(  \Phi\left(  x\right)
+\left(  0,t_{\mu}\right)  +\Phi\left(  \lambda_{L_{\mu}}\right)  \right)
\]
`resembles' in the sense of Conclusion \ref{gather}, a smooth version of the
indicator $\mathbf{1}_{L_{\mu}}$ times $R^{-\frac{1}{4}}R^{-\frac{1}{2}%
}\left\Vert \varphi_{R^{-\frac{1}{2}}}\right\Vert _{L^{\infty}}=R^{\frac{1}%
{4}}$, and so is $L^{2}$-normalized, i.e. $\left\Vert \psi_{L_{\mu}%
}\right\Vert _{L^{2}}\approx1$, and smoothly adapted to the interval $L_{\mu}$.

Now we return to the function on the right hand side of (\ref{m elim}), namely%
\[
\left\langle \breve{f},\varphi^{\lambda_{K_{L_{\mu},\alpha}}}\right\rangle
\left(  \#\mathcal{G}_{s}\left[  L_{\mu}\right]  \right)  =\left\langle
\breve{f},\varphi^{\lambda_{K_{L_{\mu},\alpha}}}\right\rangle 2^{-\frac{1}%
{2}s},
\]
and note that together with (\ref{mod decomp}), this shows that
\[
\widehat{f_{L_{\mu}}}\left(  K_{L_{\mu},\alpha}\right)  =\left(  \sum
_{I\in\mathcal{G}_{s}\left[  L_{\mu}\right]  }\breve{f}_{\mu}\left(  I\right)
e^{i\lambda_{K_{L_{\mu},\alpha}}\lambda_{I}}\right)  \widehat{h_{I_{s};\kappa
}^{\eta}}\left(  \lambda_{K_{L_{\mu},\alpha}}\right)  \sim\left\langle
\breve{f}_{\mu},\varphi_{\lambda_{K_{L_{\mu},\alpha}}}\right\rangle
2^{-\frac{1}{2}s}\widehat{h_{I_{s};\kappa}^{\eta}}\left(  \lambda_{K_{L_{\mu
},\alpha}}\right)  .
\]
The Fourier transform $\widehat{h_{I_{s};\kappa}^{\eta}}\left(  \lambda
_{K_{L_{\mu},\alpha}}\right)  $ at the end of the above display is easily
estimated. Indeed,
\[
\widehat{h_{I_{s};\kappa}^{\eta}}\left(  \lambda\right)  =\int e^{-i\lambda
\cdot x}h_{I_{s};\kappa}^{\eta}\left(  x\right)  dx
\]
is essentially nonzero only for $\lambda$ `near' $\pm2^{s}=\pm R$ in the sense
that the difference $\left\vert \lambda\pm R\right\vert $ is at most
$R^{\varepsilon}$. Indeed: if e.g. $\lambda>R+R^{\varepsilon}$, then
$e^{-i\lambda\cdot x}$ oscillates often compared to the smoothness of the
wavelet $h_{I_{s};\kappa}^{\eta}$ at scale $2^{s}$, and integration by parts
gives rapid decay in $\left\vert \lambda>R\right\vert $; while if on the other
hand if $0<\lambda<R-R^{\varepsilon}$, then $e^{-i\lambda\cdot x}$ is smooth
at scale $\frac{1}{\lambda}>\frac{1}{R-R^{\varepsilon}}$, and the moment
vanishing up to order $\kappa$ of the wavelet $h_{I_{s};\kappa}^{\eta}$ at
scale $2^{-s}=\frac{1}{R}\ll\frac{1}{R-R^{\varepsilon}}<\frac{1}{\lambda}$
yields decay at scale $\kappa$. Similar comments apply to $-R$ in place of
$R$. Recapping, we obtain up to a small growth factor $C_{\varepsilon
}R^{\varepsilon}$, that%
\begin{equation}
\left\vert \lambda_{K_{L_{\mu},\alpha}}\pm R\right\vert <R^{\varepsilon}\text{
and }\left\vert \widehat{h_{I_{s};\kappa}^{\eta}}\left(  \lambda_{K_{L_{\mu
},\alpha}}\right)  \right\vert \leq\left\Vert h_{I_{s};\kappa}^{\eta
}\right\Vert _{L^{1}}\approx2^{-\frac{1}{2}s},\label{lambda close}%
\end{equation}
which will prove to be a significant advantage derived from using smooth
Alpert projections in \textbf{Alpert Step 1} of this argument.

We now claim that if we plug the approximation (\ref{lambda close}) into the
formula for $g_{\mu}^{\alpha}$, and then invoke the inequality $\left\vert
\left\langle \breve{f},\varphi^{\lambda}\right\rangle \right\vert
\leq\left\vert \breve{f}\right\vert _{\ell^{2}}=\left\Vert f\right\Vert
_{L^{2}\left(  U\right)  }$, we obtain an estimate of the form%
\[
\sum_{\alpha}\prod_{\mu=1}^{2}\left\Vert g_{\mu}^{\alpha}\right\Vert
_{L^{q}\left(  U\right)  }^{\frac{q}{2}}\lesssim\prod_{\mu=1}^{2}\left\Vert
f_{\mu}\right\Vert _{L^{2}\left(  U\right)  }^{\frac{q}{2}}\ .
\]
Note that this bilinear inequality, with $L^{2}$ norms on the right hand side,
is shown in \cite{BeCaTa} to be necessary for the Fourier extension inequality.

Indeed, we have%
\begin{align*}
g_{\mu}^{\alpha}\left(  x\right)    & =\sum_{L_{\mu}\in\mathcal{G}_{\frac
{1}{2}s}\left[  U_{\mu}\right]  }e^{-i\mathbf{c}_{\alpha}\cdot\Phi\left(
\lambda_{L_{\mu}}\right)  }\widehat{f_{L_{\mu}}}\left(  K_{L_{\mu},\alpha
}\right)  \psi_{L_{\mu}}\left(  x\right)  \\
& \sim\sum_{L_{\mu}\in\mathcal{G}_{\frac{1}{2}s}\left[  U_{\mu}\right]
}e^{-i\mathbf{c}_{\alpha}\cdot\Phi\left(  \lambda_{L_{\mu}}\right)
}\left\langle \breve{f},\varphi^{\lambda_{K_{L_{\mu},\alpha}}}\right\rangle
2^{-\frac{1}{2}s}\widehat{h_{I_{s};\kappa}^{\eta}}\left(  \lambda_{K_{L_{\mu
},\alpha}}\right)  \psi_{L_{\mu}}\left(  x\right)  \\
& \sim\sum_{L_{\mu}\in\mathcal{G}_{\frac{1}{2}s}\left[  U_{\mu}\right]
}e^{-i\mathbf{c}_{\alpha}\cdot\Phi\left(  \lambda_{L_{\mu}}\right)
}\left\langle \breve{f},\varphi^{2^{s}}\right\rangle 2^{-\frac{1}{2}%
s}2^{-\frac{1}{2}s}\psi_{L_{\mu}}\left(  x\right)  ,
\end{align*}
where $\psi_{L_{\mu}}$ is $L^{2}$ normalized and smoothly adapted to the
interval $L_{\mu}$.

A reality check assuming that there is just a single pair $\left(  L_{1}%
,L_{2}\right)  \in\mathcal{G}_{\frac{1}{2}s}\left[  U_{1}\right]
\times\mathcal{G}_{\frac{1}{2}s}\left[  U_{2}\right]  $ in the sum, gives the
correct bound using%
\[
\sum_{\alpha}\prod_{\mu=1}^{2}\left\Vert g_{\mu}^{\alpha}\right\Vert
_{L^{q}\left(  U_{\mu}\right)  }^{\frac{q}{2}}\leq\left(  \sum_{\alpha
}\left\Vert g_{1}^{\alpha}\right\Vert _{L^{q}\left(  U_{1}\right)  }%
^{q}\right)  ^{\frac{1}{2}}\left(  \sum_{\alpha}\left\Vert g_{2}^{\alpha
}\right\Vert _{L^{q}\left(  U_{2}\right)  }^{q}\right)  ^{\frac{1}{2}},
\]
followed by%
\begin{align}
\sum_{\alpha}\left\Vert g^{\alpha}\right\Vert _{L^{q}\left(  U\right)  }^{q}
& \lesssim\sum_{\alpha}\int\left\vert e^{-i\mathbf{c}_{\alpha}\cdot\Phi\left(
\lambda_{L}\right)  }\left\langle \breve{f},\varphi^{2^{s}}\right\rangle
2^{-\frac{1}{2}s}2^{-\frac{1}{2}s}\psi_{L}\left(  x\right)  \right\vert
^{q}dx\label{single L}\\
& =\left(  \#\alpha\right)  2^{-qs}\int\left\vert \psi_{L}\left(  x\right)
\right\vert ^{q}dx\lesssim2^{s}2^{-qs}\int_{L}2^{qs}dx=2^{s}2^{-qs}%
2^{-s}2^{qs}=1,\nonumber
\end{align}
and this suggests that we investigate whether the geometry of all of the
restrictions established above, results in a significant limiting of the
number of pairs of $L^{\prime}s$ we need to sum over.

To this end, we invoke the crucial property (\ref{1-1}) that for each fixed
pair $\left(  L_{1},L_{2}\right)  \in\mathcal{G}_{\frac{1}{2}s}\left[
U_{1}\right]  \times\mathcal{G}_{\frac{1}{2}s}\left[  U_{2}\right]  $, the map
$\alpha\rightarrow\left(  K_{L_{1},\alpha},K_{L_{2},\alpha}\right)  $ is
essentially one-to-one, and moreover that the distance between the pairs is
appropriately separated, depending on the separation $\nu>0$ of the intervals
$U_{1}$ and $U_{2}$ in the bilinear inequality in \textbf{Alpert Step 1}. This
is evident from an inspection of the relevant diagram. Thus from
(\ref{lambda close}), we see that we may restrict the number of interval pairs
$\left(  L_{1},L_{2}\right)  \in\mathcal{G}_{\frac{1}{2}s}\left[
U_{1}\right]  \times\mathcal{G}_{\frac{1}{2}s}\left[  U_{2}\right]  $ that are
summed in $\sum_{\alpha}\prod_{\mu=1}^{2}\left\Vert g_{\mu}^{\alpha
}\right\Vert _{L^{q}\left(  U_{\mu}\right)  }^{\frac{q}{2}}$to at most
$R^{A\varepsilon}$ for some positive constant $A$. Define $\Gamma
_{\varepsilon}$ to be the collection of such pairs $\left(  L_{1}%
,L_{2}\right)  $. Then in particular, the number of intervals $L_{\mu}$
involved in these pairs is at most $C_{\varepsilon}R^{A\varepsilon}$ and if we
set
\[
\Psi_{\varepsilon}\equiv\left\{  L:\text{either }\left(  L,L^{\prime}\right)
\in\Gamma_{\varepsilon}\text{ or }\left(  L^{\prime},L\right)  \in
\Gamma_{\varepsilon}\text{ for some }L^{\prime}\right\}  ,
\]
then the inequality $\#\Psi_{\varepsilon}\leq C_{\varepsilon}R^{A\varepsilon}$
together with (\ref{single L}) yields,%
\begin{align*}
& \sum_{\alpha}\left(  \int\left\vert g_{1}^{\alpha}\left(  x\right)
\right\vert ^{q}dx\right)  ^{\frac{1}{2}}\left(  \int\left\vert g_{2}^{\alpha
}\left(  x\right)  \right\vert ^{q}dx\right)  ^{\frac{1}{2}}\\
& \lesssim\sum_{\alpha}\prod_{\mu=1}^{2}\left(  \int\left\vert \sum_{L_{\mu
}\in\mathcal{G}_{\frac{1}{2}s}\left[  U_{\mu}\right]  }e^{-i\mathbf{c}%
_{\alpha}\cdot\Phi\left(  \lambda_{L_{\mu}}\right)  }\left\langle \breve
{f},\varphi^{2^{s}}\right\rangle 2^{-\frac{1}{2}s}2^{-\frac{1}{2}s}%
\psi_{L_{\mu}}\left(  x\right)  \right\vert ^{q}dx\right)  ^{\frac{1}{2}}\\
& \lesssim\sum_{\alpha}\prod_{\mu=1}^{2}\left(  \int\left\vert \sum_{L_{\mu
}\in\Psi_{\varepsilon}}e^{-i\mathbf{c}_{\alpha}\cdot\Phi\left(  \lambda
_{L_{\mu}}\right)  }\left\langle \breve{f},\varphi^{2^{s}}\right\rangle
2^{-\frac{1}{2}s}2^{-\frac{1}{2}s}\psi_{L_{\mu}}\left(  x\right)  \right\vert
^{q}dx\right)  ^{\frac{1}{2}}\lesssim C_{\varepsilon}R^{C\varepsilon}.
\end{align*}

Finally, we recall that there is an additional small growth factor $2^{C\delta
s}$ arising from the fact that we computed the integral over the smaller
square $Q\left(  0,2^{s}\right)  $ rather than $Q\left(  0,2^{\frac
{s}{1-\delta}}\right)  $, thus finally yielding the following bilinear
bootstrapping inequality,%
\begin{equation}
\operatorname*{BiFour}\nolimits_{\otimes_{2}L^{q}\rightarrow L^{\frac{q}{2}}%
}^{S}\left(  R\right)  \leq C_{\varepsilon}R^{B\varepsilon}%
\operatorname*{BiFour}\nolimits_{\otimes_{2}L^{q}\rightarrow L^{\frac{q}{2}}%
}^{S}\left(  R^{\frac{1}{2}}\right)  ,\label{BiFour ineq}%
\end{equation}
for some positive constant $B$. We leave to the reader, the verification of
the many standard technical arguments indicated by a tilda$\ \sim$ that were
omitted above.

\subsection{Alpert Step 5: induction on scales}

This subsection is virtually identical to the short \textbf{Step 5} of the
previous argument, and we do not repeat the details here.

\section{Concluding comments}

The first argument given in Section 2 above, faces formidable obstacles in
higher dimensions, namely that the convolution decoupling of Fefferman breaks
down, and that the critical exponent $\frac{2d}{d-1}$ in higher dimensions $d$
is no longer an even positive integer.

The second argument given in Section 3 above, faces\ the obstacle that
(\ref{to obtain}) exploits the critical index $4$ in order to make use of the
injective property (\ref{1-1}), which is itself a variant of Fefferman's
convolution decoupling in some sense.

However, the third argument given in Section 4 above, does not seem to suffer
from either of these obstacles, and its extension to higher dimensions will be
pursued in joint work with Luda Korobenko and Cristian Rios.


\begin{thebibliography}{999999}                                                                                           %
\bibitem[Alp]{Alp}\textsc{B. K. Alpert, }\textit{A class of bases in} $L^{2} $
\textit{\ for the sparse representation of integral operators}, SIAM J. Math.
Anal \textbf{1} (1993), p. 246-262.

\bibitem[BeCaTa]{BeCaTa}\textsc{J. Bennett, A. Carbery and T. Tao}, \textit{On
the multilinear restriction and Kakeya conjectures}, Acta. Math. \textbf{196}
(2006), 261-302.

\bibitem[Bou]{Bou}\textsc{J. Bourgain,} \textit{Some new estimates for
oscillatory integrals}, in "Essays on Fourier analysis in honor of Elias M.
Stein", ed. C.\ Fefferman, R. Fefferman and S. Wainger, Princeton University
Press, 1994.

\bibitem[BoGu]{BoGu}\textsc{J. Bourgain and L. Guth},\textit{\ Bounds on
oscillatory integral operators based on multilinear estimates},
\texttt{arXiv:1012.3760v3}.

\bibitem[Bus]{Bus}\textsc{S. Buschenhenke}, \textit{Factorization in Fourier
restriction theory and near extremizers}, Math. Nachr. \textbf{297 }(2024), 195--208.

\bibitem[CaSj]{CaSj}\textsc{L. Carleson and P. Sj\"{o}lin,}
\textit{Oscillatory integrals and a multiplier problem for the disc}, Studia
Math. \textbf{44} (1972), 287--299.

\bibitem[Dem]{Dem}\textsc{C. Demeter,} \textit{Fourier restriction, decoupling
and applications,} Cambridge Studies in Advanced Mathematics \textbf{184}, 2020.

\bibitem[Fef]{Fef}\textsc{C. Fefferman, }\textit{Inequalities for strongly
singular convolution operators}, Acta Math. \textbf{124} (1970), 9--36.

\bibitem[RiSa]{RiSa}\textsc{C. Rios and E. Sawyer,} \textit{Trilinear
characterizations of the Fourier extension conjecture on the paraboloid in
three dimensions, }\texttt{arXiv:2506.03992}.

\bibitem[RiSa3]{RiSa3}\textsc{C. Rios and E. Sawyer,} \textit{A single scale
smooth Alpert trilinear characterization of the Fourier extension conjecture
on the paraboloid in three dimensions, }Canadian Bulletin of Mathematics
(2026)\textit{, }\texttt{arXiv:2506.23376v4}.

\bibitem[RiSa4]{RiSa4}\textsc{C. Rios and E. Sawyer,} \textit{A smooth Alpert
testing characterization of convolution type for the Fourier extension
conjecture on paraboloids, }\texttt{arXiv:2512.24990v9}.

\bibitem[Saw7]{Saw7}\textsc{E. Sawyer,} \textit{A probabilistic analogue of
the Fourier extension conjecture, }\texttt{arXiv:2311.03145v15}.

\bibitem[Saw8]{Saw8}E. Sawyer, \textit{A comparison of trilinear testing
conditions for the paraboloid Fourier extension and Kakeya conjectures in
three dimensions}, \texttt{arXiv:2411.18457v8}.

\bibitem[Ste]{Ste}\textsc{E. M. Stein,} \textit{Some problems in harmonic
analysis}, Harmonic analysis in Euclidean spaces (Proc. Sympos. Pure Math.,
Williams Coll., Williamstown, Mass., 1978), Part 1, pp. 3-20, Proc. Sympos.
Pure Math., \textbf{XXXV}, Part, Amer. Math. Soc., Providence, R.I., 1979.

\bibitem[Ste2]{Ste2}\textsc{E. M. Stein,} \textit{Harmonic Analysis:
real-variable methods, orthogonality, and oscillatory integrals}%
,\textit{\ }Princeton University Press, Princeton, N. J., 1993.

\bibitem[Tao1]{Tao1}\textsc{T. Tao}, \textit{The B\^{o}chner-Riesz conjecture
implies the restriction conjecture}, Duke Math. J. \textbf{96} (1999), no. 2, 363-375.

\bibitem[TaVaVe]{TaVaVe}\textsc{T. Tao, A. Vargas, and L. Vega}, \textit{A
bilinear approach to the restriction and Kakeya conjectures}, JAMS \textbf{11}
(1998), no. 4, 967--1000.

\bibitem[Zyg]{Zyg}\textsc{A. Zygmund,} \textit{On Fourier coefficients and
transforms of functions of two variables}, Studia Mathematica \textbf{50}
(1974), no. 2, 189-201.
\end{thebibliography}
\end{document}